\author{Xinyi Li \thanks{Beijing International Center for Mathematical Research, Peking University}  \and  Daisuke Shiraishi \thanks{Graduate School of Informatics, Kyoto University }}
\date{\today}
\newtheorem{dfn}{Definition}[section]
\newtheorem{thm}[dfn]{Theorem}
\newtheorem{lem}[dfn]{Lemma}
\newtheorem{rem}[dfn]{Remark}
\newtheorem{prop}[dfn]{Proposition}
\newcommand{\LE}{{\rm LE}}
\newcommand{\Es}{{\rm Es}}
\begin{document}



\section{Introduction}\label{sec:1}

Loop-erased random walk (LERW) was first introduced by Greg Lawler in \cite{Lawler}. It is obtained by erasing loops from the path of simple random walk (SRW) chronologically. Arguably 
LERW is the most tractable of all non-Gaussian models appearing in statistical physics. It has been proved that the scaling limit of LERW is a Brownian motion above four dimensions (\cite{Lawb}). On the other hand, the scaling limit is described by Schramm-Loewner 
evolution with parameter $2$ ($\text{SLE}_{2}$) in two dimensions (\cite{Sch}, \cite{LSW}, \cite{LV}).

The three-dimensional case, on the contrary, remains poorly understood. Remarkably, Kozma (\cite{Koz}) proved that the scaling limit of LERW in three dimensions is well-defined as a set. In stark contrast to the two-dimensional case, his proof does not identify what the scaling limit is. Starting from various fine properties of LERW and Kozma's scaling limit result (\cite{S}, \cite{SS}, \cite{S2}, \cite{Escape}), the authors showed (\cite{LS}) that LERW converges weakly as a process with time parametrization, which greatly strengthens Kozma's result. This improvement plays a key role in \cite{ACHS} to prove that the three-dimensional uniform spanning tree (UST) converges weakly as a metric space endowed with the graph distance.

The aim of this article is to study the H\"older continuity of the time parametrized version of the scaling limit in three dimensions. 
Let $\eta $ be the scaling limit of the three-dimensional LERW on rescaled lattice in the unit ball started from the origin. Write $\beta \in (1, 5/3]$  for the growth exponent for 3D LERW as well as the a.s.\ Hausdorff dimension of $\eta$. See Section \ref{LERW-intro} (in particular \eqref{betadef} and Section \ref{SCALING} for the definition of  $\beta$ and $\eta$ resp.) for precise definitions of notation.

Our main result is the following theorem.

\begin{thm}\label{main}
It holds that $\eta$ is a.s.\ $\beta$-H\"older continuous for all $h <1/\beta$, but not $1/\beta$-H\"older continuous.
\end{thm}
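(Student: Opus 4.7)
The proof naturally splits into the upper H\"older exponent (a.s.\ $h$-H\"older for $h<1/\beta$) and the matching negative statement (no $1/\beta$-H\"older regularity).

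\textbf{Positive direction.} The natural approach is the Kolmogorov--Chentsov criterion: if for every integer $q\geq 1$ there is a constant $C_q<\infty$ with
\[
E\bigl[|\eta(s)-\eta(t)|^q\bigr] \;\leq\; C_q\,|s-t|^{q/\beta}, \qquad s,t\in[0,1],
\]
then $\eta$ is a.s.\ $h$-H\"older for every $h<1/\beta-1/q$, and letting $q\to\infty$ we recover $h<1/\beta$. I would establish this moment bound first at the discrete level, for the LERW $\gamma$ on $\tfrac{1}{N}\mathbb{Z}^{3}$ parametrized by its step counter rescaled by $N^{\beta}$, and then pass to the continuum limit using the process-level convergence of \cite{LS}. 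The discrete bound is equivalent to a subexponential displacement tail
\[
P\!\left(|\gamma(k+m)-\gamma(k)| \geq \lambda\,m^{1/\beta}\right) \;\leq\; C\exp(-c\lambda^{a}),
\]
or, writing $M(R)$ for the number of steps of LERW inside a ball of radius $R$ around its starting point, to the lower-tail concentration $P(M(R)\leq R^{\beta}\lambda^{-\beta})\leq C\exp(-c\lambda^{a})$; the latter is within reach of the concentration techniques for LERW lengths developed in \cite{S2, Escape}, combined with the quasi-domain-Markov arguments used throughout \cite{S, SS} to handle arbitrary starting indices $k$.

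\textbf{Negative direction.} To rule out $1/\beta$-H\"older continuity I would show that with probability one,
\[
\limsup_{\delta\downarrow 0}\,\sup_{|s-t|\leq\delta}\frac{|\eta(s)-\eta(t)|}{\delta^{1/\beta}} \;=\; \infty.
\]
The plan is to construct, along dyadic scales $r_{n}=2^{-n}$ and a fixed rational time $q\in[0,1]$, events of the form ``$\eta$ exits the ball of radius $r_{n}$ around $\eta(q)$ in time at most $r_{n}^{\beta}\lambda_{n}^{-\beta}$'' for a slowly divergent sequence $\lambda_{n}\to\infty$. A matching lower bound of the form $P(\cdot)\geq c\exp(-C\lambda_{n}^{a})$, a consequence of the same concentration estimates of \cite{S2, Escape} read in the opposite direction, ensures non-summability as long as $\lambda_{n}$ grows slowly enough. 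A decoupling argument across dyadic scales then converts this into a Borel--Cantelli conclusion that infinitely many such atypically fast exits occur, which forces the ratio above to diverge at the random accumulation time.

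\textbf{Main obstacle.} I expect the principal difficulty to lie in the decoupling step of the negative direction: the 3D LERW scaling limit admits no exact Markov or conformal invariance, so independence of events at widely separated scales must be established quantitatively, via LERW crossover/hitting estimates controlling how the near future of the curve depends on its near past, in the spirit of \cite{S, SS, Escape}. The positive direction is comparatively tame, the moment bounds following once the concentration estimates are lifted through \cite{LS}, though the upper tail on displacement (LERW making an unusually large jump in its natural parametrization) still requires the full strength of the length-concentration machinery.
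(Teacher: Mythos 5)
Your overall strategy — a tail estimate on modulus of continuity plus Borel--Cantelli for the positive direction, and a construction of atypically fast traversals for the negative direction — matches the paper in spirit, but both directions contain a genuine gap where the paper deploys its key idea.

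\textbf{Positive direction.} You reduce to the lower-tail concentration $P(M(R)\le R^\beta\lambda^{-\beta})\le C\exp(-c\lambda^a)$ and assert that ``quasi-domain-Markov arguments'' handle arbitrary starting indices $k$. This is precisely where the proof is hard. The exponential tail bound \eqref{exp-tail} from \cite{S} is stated only for the \emph{beginning} segment of the LERW started at $0$; for a generic internal time $k$, the domain Markov property \eqref{DMP} identifies $\gamma[k,\cdot]$ conditional on $\gamma[0,k]$ as the loop erasure of a \emph{conditioned} walk, for which no length-concentration estimate is available off the shelf, and there is no Markov shift of $\gamma$ itself. The paper circumvents this by switching to the wired uniform spanning tree via Wilson's algorithm and an $\varepsilon$-net: one runs additional LERWs from a deterministic dense net $\{y^j\}$, shows (using hittability, quasi-loop exclusion and the structure of the UST) that their beginning segments cover $\gamma_n$, and applies the known tail bound \eqref{exp-tail'} at each net point. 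Your proposal as written has a missing step here; without the UST/net device (or an equivalent substitute) you cannot propagate the length concentration from the first segment to all of $\gamma_n$. The Kolmogorov--Chentsov formulation is a fine packaging of the conclusion, but the hard estimate underneath is not obtained the way you suggest.

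\textbf{Negative direction.} You correctly identify decoupling across scales as the obstacle, but the paper resolves it differently than you propose. Rather than working at a fixed time $q$ and dyadic spatial radii $r_n=2^{-n}$ (which would require a near-scaling-invariance that the 3D LERW scaling limit is not known to possess), the paper fixes one mesoscopic scale $m$, tiles the domain with $q_1\asymp 2^{m^{1/2}}$ concentric annuli, and in each annulus constructs a ``tube'' event where $\gamma_n$ crosses $2m_0=2m^{1/10}$ small cubes with nice cut points, no backtracking, and controlled occupation numbers. The probability lower bound $\gtrsim 2^{-4m_0^2}$ in Proposition \ref{lem-con-2} is not a simple reversal of a concentration estimate; it requires the gambler's-ruin estimate \eqref{Gambler}, the escape-probability asymptotics \eqref{ES}, the multi-point function bounds (Propositions \ref{prop:npoint1}--\ref{prop:npoint2}), and the cut-point decomposition of Lemma \ref{lem1-critical}. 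Decoupling is then \emph{exact} rather than quantitative: the domain Markov property for LERW gives the conditional law in each annulus, and the Harnack principle makes the per-annulus lower bound uniform over the realized past. Your ``decoupling across dyadic scales for the limit process'' would be much harder than iterating in the discrete prelimit, where exact Markovian structure is available. The ideas you outline are in the right direction but would need to be reorganized around the discrete domain-Markov iteration and the explicit tube construction to close.
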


\begin{rem}\label{mainrmk}
One may compare Theorem \ref{main} with the H\"older continuity of the scaling limit of LERW in other dimensions. In two dimensions, as LERW converges to Schramm-Loewner evolution (SLE) with parameter $2$ (see \cite{Sch,LSW,LV}), the study of the same question is covered by that of the SLE curves. The optimal H\"older exponent of ${\rm SLE}_2$ under natural parametrization (i.e. parametrized through Minkowski content), which best resembles our study in 3D here, is studied in \cite{Zhan}, and it is proved that ${\rm SLE}_2$ is $\alpha$-H\"older continuous for any $\alpha<4/5$, but not $4/5$-H\"older continuous. (Recall that ${\rm SLE}_2$ is a $5/4$-dimensional random curve).  Note that the optimal H\"older exponent for SLE under capacity parametrization (see \cite{LV2}), which is a different quantity, is less relevant here as capacity parametrization does not capture the natural convergence of 2D LERW towards ${\rm SLE}_2$.  In four and higher dimensions, the question is less interesting since the scaling limit of LERW is Brownian motion, whose H\"older continuity is well known.

It is also an interesting question whether an analogue of McKean's dimension theorem  for Brownian motion and ${\rm SLE}_\kappa$ also holds in this case (cf.\ Theorem 1.3 of \cite{Zhan}): is it true that for each deterministic closed set $A\subset \mathbb{R}^+$,
\begin{equation}
    {\rm dim_H}(\eta(A)) = \beta  {\rm dim_H}(A)?
\end{equation}
Here ${\rm dim_H}(\cdot)$ stands for the Hausdorff dimension of a set.
\end{rem}

We now give a few words about the idea of the proof and the structure of the paper.
To prove the $h$-H\"older continuity for $h<1/\beta$, the key ingredient is the exponential tail bound on the length of 3D LERW from \cite{S} (see \eqref{exp-tail}). However, it is only available for the beginning part of $\eta$. To circumvent this, we resort to the uniform spanning tree through Wilson's algorithm and apply this tail bound to a dense deterministic ``net'', such that with high probability the beginning part of LERW from each point in this net covers the whole of $\eta$. See Section \ref{sec:3.2}, in particular the proof of Proposition \ref{HC-upper} for more details. 

For the second claim, we will first consider the event that LERW keeps moving with no big backtracking in a small-bore ``tube'' in which the LERW substantially speeds up and we have a good control on the modulus of continuity.
After we give a lower bound on the probability of this event,  we will carry out an iteration argument to make this event happen with high probability. See the beginning of Section \ref{sec:4} for a more detailed explanation of the proof.

Before we finish this section, let us explain the organization of this paper here. General notation and background on LERW will be introduced in Section \ref{sec:2}. Then the two claims of Theorem \ref{main} will be treated in Sections \ref{sec:3} and \ref{sec:4} respectively. The proof of Theorem \ref{main} follows from Propositions \ref{0626-a-1} and \ref{CRITICAL}. 
%
%

\section{Notation and preliminaries}\label{sec:2}
In this section, we will introduce notation and some preliminary results (mainly on loop-erased random walk) which will be useful later.

\subsection{General notation}\label{se:general}
Let $\mathbb{R}^d$ and $\mathbb{Z}^d$ stand for the $d$-dimensional Euclidean space and integer lattice respectively.  We will consider only $d=3$ in this paper. For $a>0$, let $a\mathbb{Z}^d$ stand for the rescaled lattice with mesh size $a$. For most of the time, we focus on the case $a=2^{-n}$ for some integer $n \ge 0$.

Given a subset $A \subset \mathbb{R}^{d}$, we write $\partial A$ for its boundary. Set $\overline{A} = A \cup \partial A$ 
for its closure. Let $| \cdot |$ and $\| \cdot \|_\infty$ denote the Euclidean and maximum norm in $\mathbb{R}^{d}$ respectively. For $A, B \subset \mathbb{R}^{d}$, we let ${\rm dist} (A, B) = \inf_{x \in A, y \in B} |x-y|$ be the distance between $A$ and $B$. When $A = \{ x \}$, we set ${\rm dist} (\{ x \}, B) = {\rm dist} (x, B)$. Let $A+B = \{ a + b \ | \ a \in A, \ b \in B \}$. When $A = \{ x \}$,  we write $x + B $ instead of $\{ x \} + B$. We also write $r A = \{ r a \ | \ a \in A \}$ for $r > 0$. Let $B (x, r)$ and $B (r)$ stand for $\{ y \in \mathbb{R}^{d} \ | \ |x- y| < r \}$ and $\{ y \in \mathbb{R}^{d} \ | \ |y| < r \}$ resp. In particular, write $\mathbb{D} =B(1)$ for the unit open ball in $\mathbb{R}^{d}$.

Given a subset $A \subset  2^{-n}\mathbb{Z}^{d}$, we write $\partial A = \{ x \in  2^{-n}\mathbb{Z}^{d} \setminus A \ | \ \exists y \in A \text{ such that } |x-y| = 2^{-n}  \}$ for the outer boundary of $A$. 
Take $x \in 2^{-n}\mathbb{Z}^{d}$ and $r > 0$. The discrete ball of radius $r$ centered at $x$ is denoted by $B^{(n)} (x, r) = \{ y \in 2^{-n}\mathbb{Z}^{d} \ | \ |x-y| < r \}$. We write $B^{(n)} (r) = B^{(n)} (0, r)$. 

\medskip

A sequence of points $\lambda = [\lambda (0), \lambda (1), \cdots , \lambda (m) ] \subset 2^{-n}\mathbb{Z}^{d}$   lying on $2^{-n}\mathbb{Z}^{d}$ is called a path with length $m$ if $| \lambda (i-1)  - \lambda (i) | = 2^{-n}$ for each $1 \le i \le m$. 
We denote the length by ${\rm len} (\lambda)$. We say $\lambda$ is a simple path if $\lambda (i) \neq \lambda (j) $ for all $i \neq j$. When we have two paths $\lambda = [\lambda (0), \lambda (1), \cdots , \lambda (m) ] \subset 2^{-n}\mathbb{Z}^{d}$ and $\lambda' = [\lambda' (0), \lambda' (1), \cdots , \lambda' (m') ] \subset 2^{-n}\mathbb{Z}^{d}$  satisfying $\lambda (m) = \lambda' (0)$, we write 
\begin{equation*}
\lambda \oplus \lambda' = [\lambda (0), \lambda (1), \cdots , \lambda (m), \lambda' (1), \cdots , \lambda' (m') ]
\end{equation*}
for their  concatenation.

Before we end this subsection, we state our conventions on constants and asymptotics.

When $f (x)$ and $g (x)$ are two positive functions, we write  
 \begin{equation*}
 f(x) \asymp g(x)
 \end{equation*}
 if there exists a universal constant $c > 0$ such that $c f(x) \le g(x) \le \frac{1}{c} f(x)$ for every $x>0$.
We also write
\begin{equation*}
g(x) = O \big( f(x) \big) \ \ \ \text{ and }  \ \ \  g (x) = o \big( f(x) \big)
\end{equation*}
if 
\begin{equation*}
|g (x)| \le C f(x)  \ \ \ \text{ and }  \ \ \  g (x ) / f (x) \to 0,
\end{equation*}
respectively.

We use $c, c', C, C',\ldots$ to denote positive constants which may change from place to place and $c_{i}, C_{i}, c_{\star},\ldots$ positive constants that are kept fixed throughout the paper. Dependence on other variables or constants will be marked at the first appearance of the constants. 
\subsection{Simple random walk and Green's functions}\label{sec:SRW}
In this subsection, we will introduce the Green's function for simple random walk and list some estimates on return probabilities  that we will use later.

Let $S = S^{(n)} = \big( S^{(n)} (k)  \big)_{k \ge 0}$  be  a simple random walk (SRW) in $2^{-n}\mathbb{Z}^{d}$. The superscript $(n)$ is omitted whenever this does not cause any confusion. To denote its probability law and expectation, we use $P^{x}$ and $E^{x}$ when the SRW starts from $x$. When $x=0$, we omit the superscript. When there is need to work with multiple random walks, we will use other letters to represent other walks and subscripts to distinguish their respective laws; e.g.,  $P^x_X$ is the law of the simple random walk $X$, etc.

Given $A\subset2^{-n}\mathbb{Z}^3$, let $\tau_A = \inf \{ k \ge 0 \ | \ S(k) \notin  A \}$ be the first time $S$ exits from $A$ and write $T_{x, r}= \tau_{B^{(n)} (x, r)}$ for the exit time of the discrete ball with the convention that $\inf \emptyset = + \infty $. We also write $T_{r} = T_{0, r}$. If we are working with multiple walks at the same time, we will use superscripts to distinguish them; e.g., $T^X_{x,r}$, $T^{X}_{r}$ and $\tau^X_A$ stand for the first exit time of $X$ from $B^{(n)} (x, r)$, $B^{(n)} (r)$ and some set $A$ respectively.

We now turn to the Green's function and some of its properties. The Green's function $G :2^{-n} \mathbb{Z}^{3} \times 2^{-n}\mathbb{Z}^{3} \to [0, \infty )$ for $S$ is defined as
\begin{equation}\label{green-entire}
G (x, y) = E^{x} \bigg( \sum_{k =0}^{\infty} {\bf 1} \{ S (k) = y \} \bigg)  
\end{equation}
$\text{ for } x, y \in \mathbb{Z}^{3}.$ We write $G (x)  = G (0, x)$ for short.

Given a set $A \subset \mathbb{Z}^{d}$, the Green's function (for $S$) on $A$ is defined by 
\begin{equation}\label{grin}
G_{A} (x, y) = E^{x} \bigg( \sum_{k = 0}^{\tau_A} {\bf 1} \{ S(k) = y \} \bigg) \ \ \text{ for } \ x,y \in A.
\end{equation}

For the case that $n=0$ and $A = \{ x \in \mathbb{Z}^{3} \ | \ a \le |x| \le b \}$ with $1 \le a < b $, it follows from \cite[Proposition 1.5.10]{Lawb} that 
\begin{equation}\label{srwbound}
P^{x} \big( |S_{\tau} | \le a \big) = \frac{ |x|^{-1} - b^{-1}  + O (a^{-2} )  }{a^{-1} - b^{-1}}.
\end{equation}
for all $x \in A$.
When $a=1$ and $|x|\leq b$, an easy modification shows that there exists a universal constant $c_{0} > 0$ such that 
\begin{equation}\label{srwbound-2}
P^{x} \big( S_{\tau} = 0 \big) = \frac{ c_{0}  |x|^{-1} - c_{0}  b^{-1}  + O (|x|^{-2} )  }{G (0) - c_{0}  b^{-1}},
\end{equation}
see (2.4) in \cite{LS} for more details.

\subsection{Some estimates on SRW}

In this subsection, we will review the Harnack principle and the gambler's ruin estimate. 

Let $O \subset \mathbb{R}^{d}$ be a connected open set and $F $ be a compact subset of $O$. Then the Harnack principle states that there exist $C = C (F, O) < \infty$ and $N = N (F, O) < \infty$ such that if $n \ge N$,
\begin{equation*}
O_{n} = \{ x \in \mathbb{Z}^{d} \ | \ n^{-1} x \in O \},  \ \ \ \ \ \  F_{n} = \{ x \in \mathbb{Z}^{d} \ | \ n^{-1} x \in F \}
\end{equation*}
and $f : O_{n} \to [0, \infty )$ is discrete harmonic in $O_{n}$, then  
\begin{equation}\label{Harnack}
f (x) \le C f (y) 
\end{equation}
for $x, y \in F_{n}$. See \cite[Theorem 6.3.9]{LawLim} for further details. 

We will next recall the gambler's ruin estimate in the following form. Let $S$ be the SRW in $\mathbb{Z}^{d}$ ($d \ge 2$) started at $x \in \mathbb{Z}^{d}$ and take $\theta \in \mathbb{R}^{d}$ with $| \theta | =1$ and $x \cdot \theta > 0$, where $x \cdot \theta$ stands for the usual inner product of $x$ and $\theta$ on $\mathbb{R}^{d}$.  We write $\widetilde{S} (k) = S (k) \cdot \theta$.  Then the gambler's ruin estimate ensures that there exist universal constants $0 < c_{1}, c_{2} < \infty$ (which do not depend on $\theta$ and $x$) such that if $r \ge 1$, $t_{r} = \inf \{ k \ge 0 \ | \ \widetilde{S} (k) \le 0 \text{ or } \widetilde{S} (k) \ge r \}$ and $0 \le x \cdot \theta \le r$, then 
\begin{equation}\label{Gambler}
c_{1} \frac{x \cdot \theta +1}{r} \le P^{x} \big( \widetilde{S} ( t_{r} ) \ge r \big) \le c_{2} \frac{x \cdot \theta +1}{r}.
\end{equation}
See \cite[Proposition 5.1.6]{LawLim} for more details.

\subsection{Metric spaces}\label{metric}
Let ${\cal C} (\overline{\mathbb{D}} )$ denote the space of continuous curves $\lambda : [0, t_{\lambda}] \to \overline{\mathbb{D}} $ with finite time duration $t_{\lambda} \in [0, \infty)$. 
Given  two continuous curves $\lambda_{i} \in {\cal C} (\overline{\mathbb{D}} )$ ($i=1,2$) where $t_{\lambda_{i}} \in [0, \infty)$ stands for the time duration of $\lambda_{i}$,  we then define $\rho (\lambda_{1}, \lambda_{2} )$ by 
\begin{equation}\label{rho-metric}
\rho (\lambda_{1}, \lambda_{2} ) = |t_{\lambda_{1}} - t_{\lambda_{2}} | + \max_{0 \le s \le 1} \big| \lambda_{1} ( s t_{\lambda_{1}} ) - \lambda_{2} ( s t_{\lambda_{2}} ) \big|.
\end{equation}
It is routine to check that $\big( {\cal C} (\overline{\mathbb{D}} ), \rho \big)$ is a metric space (see Section 2.4 of \cite{KS} for this).

We next set $\big( {\cal H} (\overline{\mathbb{D}} ), d_{\text{Haus}} \big)$ for the space of all non-empty compact subsets of $\overline{\mathbb{D}}$, where the Hausdorff distance  $d_{\text{Haus}}$ is defined by 
\begin{equation}\label{Haus}
d_{\text{Haus}} (A, B) := \max \Big\{ \sup_{x \in A} \inf_{y \in B} |x-y|,  \  \sup_{y \in B} \inf_{x \in A} |x-y| \Big\}, 
\end{equation}
for $ A, B \in {\cal H} (\overline{\mathbb{D}})$.
Then $\big( {\cal H} (\overline{\mathbb{D}} ), d_{\text{Haus}} \big)$ becomes a metric space (see Proposition 7.3.3 of \cite{BBI} for this).

\subsection{Loop-erased random walk}\label{LERW-intro}
\subsubsection{Loop erasure}

For a path $\lambda = [\lambda (0), \lambda (1), \cdots , \lambda (m) ] \subset 2^{-n}\mathbb{Z}^{d}$, we define the loop-erasure  $\text{LE} (\lambda )$ as follows. Let 
\begin{equation*}
s_{0} = \max \{ j \ | \ \lambda (j) = \lambda (0) \}
\end{equation*}
and 
\begin{equation*}
s_{i} = \max \{ j \ | \ \lambda (j) = \lambda ( s_{i-1} +  1 ) \} \ \ \text{ for } i \ge 1.
\end{equation*}
Setting
\begin{equation*}
k = \min \{ i \ | \ \lambda ( s_{i} ) = \lambda (m) \},
\end{equation*}
the loop erasure $\text{LE} (\lambda )$ of $\lambda$ is given by 
\begin{equation}\label{LEP}
\text{LE} (\lambda) = [\lambda  (s_{0}), \lambda (s_{1} ), \cdots , \lambda  (s_{k} ) ],
\end{equation} 
which is a simple path satisfying that $\text{LE} (\lambda) \subset \lambda$, $\text{LE} (\lambda) (0) = \lambda (0)$ and $\text{LE} (\lambda) (k) = \lambda (m) $.

Recall that $S= S^{(n)}$ stands for the SRW on $2^{-n}\mathbb{Z}^{d}$. Although we will refer to $\text{LE} \big( S[0, T] \big) $ as a loop-erased random walk (LERW) for any (random or non-random) time $T$, in this work, we will always work with the following setup when we consider the H\"older continuity: 
\begin{equation}\label{gammandef}
    \gamma_n:=\text{LE}\Big(S\big[0,T_1\big]\Big).
\end{equation}
We denote its law and respective expectation by $P$ and $E$ respectively.

\subsubsection{Domain Markov property}\label{DMPLEW}
The LERW is not a Markov process. However, it satisfies the domain Markov property in the following sense.  

Take a finite set $A \subset 2^{-n} \mathbb{Z}^{d}$ and a point $x \in A$  
and write (following the notation in Section \ref{sec:SRW}) $\gamma= \text{LE} \big( S [0, \tau_{A} ] \big)$. Take two simple paths $\lambda = [\lambda (0), \lambda (1), \cdots , \lambda (m)]$ and  $\lambda' = [\lambda' (0), \lambda' (1), \cdots , \lambda' (m')]$ with $\lambda (m) = \lambda' (0)$ satisfying 
\begin{equation*}
P \big( \gamma = \lambda \oplus \lambda' \big) > 0.
\end{equation*}
Let $X$ be the random walk on $2^{-n} \mathbb{Z}^{d}$ started at $\lambda (m)$ conditioned that $X [1, \tau^{X}_{A}]  \cap \lambda = \emptyset$. 
Then the domain Markov property gives the law of $\gamma [m, m+m']$ conditioned that $\gamma [0, m] = \lambda$ via the loop-erasure of $X$ in the following way (see \cite[Proposition 7.3.1]{Lawb} for this).

\begin{equation}\label{DMP}
P \Big( \gamma [0, m+ m']  = (\lambda \oplus \lambda' )  [0, m+ m'] \ \Big| \ \gamma [0, m] = \lambda \Big) = P \Big( \text{LE} \big( X [0, \tau^{X}_{A} ] \big) = \lambda' \Big).
\end{equation}

\subsubsection{Length of LERW in $d=3$}\label{LENGTH}
We will  consider only the $d=3$ case here and in Sections \ref{ONE}, \ref{SCALING} and \ref{qloops} below.

Recall the definition of $\gamma_n$ in \eqref{gammandef} and write $M_{n} = {\rm len} ( \gamma_{n} )$ for its length.  It is proved in \cite[Theorem 1.4]{S} and \cite[Corollary 1.3]{Escape} that there exist 
\begin{equation}\label{betadef}
\beta \in (1, \frac{5}{3} ],  
\end{equation}
$c_{1} > 0$ and $c_{2} > 0$ such that for all $n \ge 1$ and $r \ge 1$,
\begin{align}
& E (M_{n} ) \asymp 2^{\beta n} \   \ \ \ \text{ and }  \label{growth} \\
&P \bigg(  \frac{M_{n}}{ E (M_{n} )} \in [r^{-1} , r] \bigg) \ge 1 - c_{1} \exp \big\{ - c_{2} r^{1/2} \big\}. \label{exp-tail}
\end{align}
We call $\beta$ the growth exponent for 3D LERW.

In this work, we will also need the following variants of the above bound on the length of the beginning part of LERW.
Let $\gamma'_n$ be the loop erasure of a simple random walk started at the origin and stopped upon first exit of $D\cap 2^{-n}\mathbb{Z}^3$ such that $B(0,4)\subset D$. It follows from \cite[Corollary 4.5]{Mas}  that there exists some universal constant $c > 0$ such that if we write $\gamma_{n}^{\infty} := \text{LE} \big( S^{(n)} [0, \infty ) \big) $ for the infinite LERW and $\lambda$ for an arbitrary path in $B^n(0,1)$ starting from $0$, then 
\begin{equation}\label{Masson}
c P ( \gamma_{n}'[0,{\rm len}(\lambda)] = \lambda ) \le P ( \gamma_{n}^{\infty} [0,{\rm len}(\lambda)]= \lambda ) \le c^{-1} P ( \gamma_{n}' [0,{\rm len}(\lambda)]= \lambda ).
\end{equation}

Set $M'_n:=T^{\gamma'_n}_{1}$ for the first time $\gamma'_n$ exits $B^{(n)} (0,1)$. To obtain similar estimates as described in \eqref{growth} and \eqref{exp-tail} for $M_{n}'$, it suffices to consider the case that $D = B (0, 4)$ thanks to \eqref{Masson}. In that case, we note that $M_{n}' \ge a$ implies that $M_{n+2} \ge a$ for any $a > 0$. (Note that, in the $D = B (0, 4)$ case, we can use the same simple random walk $S$ to construct $\gamma_{n}'$ and $\gamma_{n+2} = 2^{-2} \, \gamma_{n}'$.) Thus, it follows from \eqref{growth}, \eqref{exp-tail} and \cite[Theorem 8.12]{S} that 
\begin{equation}\label{exp-tail'}
P \bigg(  \frac{M'_{n}}{ E (M_{n} )} \in [r^{-1} , r] \bigg) \ge 1 - c_{3} \exp \big\{ - c_{4} r^{1/2} \big\}
\end{equation}
for some universal constants $0 < c_{3} , c_{4} < \infty$. Since $E (M_{n}') \le E (M_{n+2})$ in the case that $D = B (0, 4)$, combining \eqref{exp-tail'} and \eqref{growth}, we have 
\begin{equation}\label{growth'}
E (M'_{n} ) \asymp 2^{\beta n}. 
\end{equation}

  \subsubsection{Escape probability}\label{ONE}
For convenience of notation we consider the original, un-rescaled lattice $\mathbb{Z}^3$ in (and only in) this subsection. 
We will consider two independent simple random walks $S^{1}$ and $ S^{2}$ on $\mathbb{Z}^{3}$ started at the origin.
 Let $T^{i}_{N}$ denote the first time that $S^{i}$ exits from $B^{(0)}(N)$ for $i=1,2$. The following non-intersection probability is studied in \cite{Escape}. Define
  \begin{align}\label{ESCAPE}
 \text{Es} (N) = P \Big( \text{LE} \big(  S^{1} [0, T^{1}_{N} ] \big) \cap S^{2} [1, T^{2}_{N} ] = \emptyset \Big)  \ \ \text{ for }  \ N \ge 1.
   \end{align}

 Then \cite[Corollary 1.3]{Escape} proves that 
  \begin{equation}\label{ES}
  \text{Es} (N) \asymp N^{- (2 - \beta )},
  \end{equation}
  where $\beta $ is the growth exponent as described in \eqref{growth}.

 The asymptotics of the escape probability are handy tools in estimating the multi-point function of LERW. More precisely, \cite[Proposition 8.2]{S} can be  stated as follows.
\begin{prop}\label{prop:npoint1}
Let  $z_0 = 0, z_1,\cdots,z_k$ be points in $D=B^{(0)}(N)$.
There exists a universal constant $C<\infty$ such that
\begin{equation}\label{eq:npointbound}
P\Big(z_1,\cdots,z_k \in  \text{LE} \big(  S^{1} [0, T^{1}_{N} ]\big)\Big) \leq C^k\sum_{\pi\in\Pi_k}\prod_{i=1}^k G_{D}(z_{\pi_{i-1}},z_{\pi_i}){\Es}(d^{\pi_i})
\end{equation}
where $\Pi_k$ stands for the permutation group of $\{0,1,\cdots,k\}$ with $\pi_{0} =0$ for all $\pi \in \Pi_{k}$, 
\begin{align*}
&d^{\pi_{i}}=|z_{\pi_{i}}-z_{\pi_{i-1}}| \wedge |z_{\pi_{i}}-z_{\pi_{i+1}}| \wedge {\rm dist}(z_{\pi_{i}}, \partial B^{(0)}(N)) \ \ \text{ for } \ \ 1 \le i \le k-1  \\
& \text{ and }  \ \ \ d^{\pi_{k}} = |z_{\pi_{k}}-z_{\pi_{k-1}}| \wedge {\rm dist}(z_{\pi_{k}}, \partial B^{(0)}(N)). 
\end{align*}
\end{prop}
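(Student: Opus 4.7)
The plan is to bound the multi-point probability by a union bound over the order in which the points $z_1,\ldots,z_k$ appear on the LERW $\gamma:=\text{LE}(S^1[0,T^1_N])$, and then for each fixed ordering $\pi$ estimate the conditional probability by a product of one-step factors via the domain Markov property \eqref{DMP}. More precisely, for any realization in the event $\{z_1,\ldots,z_k\in\gamma\}$, there is a unique permutation $\pi\in\Pi_k$ (with $\pi_0=0$) such that $z_{\pi_1},z_{\pi_2},\ldots,z_{\pi_k}$ appear on $\gamma$ in this order (starting from $z_{\pi_0}=0=\gamma(0)$). Summing over $\pi$ reduces everything to bounding, for each fixed $\pi$, the probability that the LERW passes through $z_{\pi_1},\ldots,z_{\pi_k}$ in that order.

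For the ordered event, I would carry out an induction on $k$. Suppose the bound holds for $k-1$ points along a fixed ordering, leading up to the prefix of $\gamma$ ending at $z_{\pi_{k-1}}$. Conditioning on this prefix $\lambda$, the domain Markov property \eqref{DMP} says the continuation is distributed as the loop-erasure of a random walk $X$ started at $z_{\pi_{k-1}}$ and conditioned to avoid $\lambda$ in $D$. The event that $z_{\pi_k}\in \text{LE}(X[0,\tau^X_D])$ can then be analyzed by a last-visit-to-$z_{\pi_k}$ decomposition of $X$: write the SRW visiting $z_{\pi_k}$ at time $j$, and demand that the initial segment of $\text{LE}(X[0,j])$ reaches $z_{\pi_k}$ without being erased by the future (i.e., $X[j+1,\tau^X_D]$ avoids $\text{LE}(X[0,j])$). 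Summing over $j$ gives a factor of $G_D(z_{\pi_{k-1}},z_{\pi_k})$ (the expected number of visits), multiplied by a non-intersection / escape probability.

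To upgrade this escape factor to $\Es(d^{\pi_k})$, observe that on the scale $d^{\pi_k}$ the relevant non-intersection event forces the future of the SRW from $z_{\pi_k}$ to avoid the loop-erased past within the ball of radius $d^{\pi_k}$ centered at $z_{\pi_k}$; separability of scales together with \eqref{ES} (and its two-sided SRW/LERW non-intersection formulation) bounds this by $\Es(d^{\pi_k})$ up to a universal constant. The intermediate (non-terminal) indices contribute the symmetric quantity $|z_{\pi_i}-z_{\pi_{i-1}}|\wedge |z_{\pi_i}-z_{\pi_{i+1}}|\wedge\mathrm{dist}(z_{\pi_i},\partial D)$, which naturally appears because the escape can be realized in either direction (past or future) and also must stay inside $D$; the boundary term $\mathrm{dist}(z_{\pi_i},\partial D)$ is there because \eqref{ES} is stated for unconstrained walks in $\mathbb{Z}^3$, and one applies it to balls contained in $D$. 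Iterating the induction step thus produces the product $\prod_{i=1}^k G_D(z_{\pi_{i-1}},z_{\pi_i})\Es(d^{\pi_i})$ with a constant $C$ per step, giving the $C^k$ factor.

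The main obstacle is the third step: correctly identifying the escape scale $d^{\pi_i}$ and separating the forward/backward non-intersection events so that the one-scale estimate \eqref{ES} can be applied cleanly at each step. This requires a careful choice of the ``decoupling'' radius, splitting the SRW at exit and re-entry times of the relevant ball, using the Harnack principle \eqref{Harnack} and Green's function estimates to absorb geometric error into the universal constant, and ensuring that when $d^{\pi_i}$ is forced by the distance to a non-adjacent point or by the boundary, the estimate is still consistent with the local escape probability bound. Once this local bookkeeping is set up, the rest is a bounded number of applications of \eqref{DMP}, \eqref{ES}, and the definition \eqref{grin} of $G_D$.
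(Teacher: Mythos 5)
The paper does not actually contain a proof of this proposition: just above the statement it is noted explicitly that this is a restatement of Proposition~8.2 of reference \cite{S} (Shiraishi, \textit{Ann.\ Probab.}\ 2018), which is imported wholesale, and the same remark applies to Proposition~\ref{prop:npoint2}. There is therefore no ``paper's own proof'' in this document against which to compare your attempt; the relevant argument lives in \cite{S}.

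Your outline --- a union bound over the orderings in which the $z_i$ appear on the curve, followed by a decomposition at (last-) visit times to each $z_{\pi_i}$ yielding Green's-function factors, and a non-intersection event at each point bounded by an escape probability at scale $d^{\pi_i}$ --- is the standard structure for multi-point LERW estimates and does capture the shape of the argument in \cite{S}. Two things are worth flagging about your specific implementation. First, you propose to iterate via the domain Markov property, conditioning on the LE prefix $\lambda$ ending at $z_{\pi_{k-1}}$ and applying a one-point estimate to the conditioned walk $X$. But the one-point estimate for the LE of a \emph{conditioned} walk requires that the target point be well separated from the conditioning set; this is exactly why the paper states Proposition~\ref{prop:npoint2} with the extra geometric hypothesis $z_1,\dots,z_k\in A_n(x)$. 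Without such separation, the escape probability for $X$ near $z_{\pi_k}$ is not directly comparable to $\Es(\cdot)$, and the random prefix $\lambda$ can come arbitrarily close to $z_{\pi_k}$. The cleaner route, and the one used in \cite{S} and in the analogous 2D arguments of Lawler, is to fix the permutation up front, decompose the \emph{underlying SRW path} at its last visit times $\sigma_1 < \cdots < \sigma_k$ to $z_{\pi_1},\dots,z_{\pi_k}$ simultaneously, and then bound each resulting non-intersection event by an escape probability at scale $d^{\pi_i}$; the min with $|z_{\pi_i}-z_{\pi_{i\pm1}}|$ and with ${\rm dist}(z_{\pi_i},\partial D)$ is built in from the start because the decoupling balls must not contain the neighbouring $z_{\pi_j}$ or the boundary. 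Second, you correctly identify the ``main obstacle'' as the scale bookkeeping, but that bookkeeping (choosing decoupling radii, separating the forward/backward non-intersection events, applying Harnack and Green's-function comparisons to absorb geometric error into $C^k$) is in fact the entire content of the proof, not a technicality to be filled in at the end. As it stands, the proposal is a reasonable road map but not a proof; and since the paper itself does not reproduce the argument, the honest citation here is \cite[Proposition~8.2]{S}.
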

This bound can also be extended to the loop-erasure of conditioned random walks.
 Take integers $m,k,N$ with $\sqrt{3}m+k\leq N$. 
We set $A_m := [-m, m]^3\cap \mathbb{Z}^3$. We take a point $x$ lying in a face of $A_m$. We write $l$ for the infinite half line started at $x$ which lies in $A_m^c$ and is orthogonal to the face of $A_m$ containing $x$ (and choose one of  such faces arbitrarily if $x$ happens to lie on an edge of $A_m$). We write $y$ for the unique point which lies in $l$ and satisfies $|x-y|= \frac{k}{2}$. Then we let  $A_n(x):=y+[-k/4, k/4]^3\cap\mathbb{Z}^3$ 
Consider $K \subset A_m$. Suppose that $X$ is a random walk starting from $x$ conditioned that $X[0,T_{N}] \cap K=\emptyset$. 
The role of the set $A_{n} (x)$ is that $\LE(X[0,T_{N}])$ is not strongly affected by the conditioning (on the event $X[0,T_{N}] \cap K=\emptyset$) when it stays in $A_{n} (x)$. In fact, we have the following proposition, which also follows from \cite[Proposition 8.2]{S}.

\begin{prop}\label{prop:npoint2}
The bound \eqref{eq:npointbound} is also true if we replace $\gamma_n$ by $\LE(X[0,T_{N}])$ in the statement of Proposition \ref{prop:npoint1}, provided that $z_{0} = x$ and $z_{1}, \cdots , z_{k} \in A_{n} (x)$.
\end{prop}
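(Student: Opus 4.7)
The plan is to adapt the proof of Proposition \ref{prop:npoint1} (cf.\ \cite[Proposition 8.2]{S}) to the conditioned setting. The only new ingredient needed is to show that the global conditioning on $\{X[0,T_{N}]\cap K=\emptyset\}$ affects all the relevant local estimates at points of $A_n(x)$ by at most a universal multiplicative constant, which can then be absorbed into the combinatorial factor $C^k$. As in the unconditioned case, I would first take a union bound over the orderings $\pi\in\Pi_k$ in which the points $z_0=x,z_1,\ldots,z_k$ appear along $\LE(X[0,T_N])$. For a fixed $\pi$, a last-visit decomposition of the trajectory of $X$ between consecutive prescribed points produces a Green's function factor $G_{B^{(0)}(N)}(z_{\pi_{i-1}},z_{\pi_i})$, while the requirement that $z_{\pi_i}$ survive loop-erasure yields an escape factor comparable to $\Es(d^{\pi_i})$.

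The new step is to compare the law of the conditioned walk with that of simple random walk on segments inside $A_n(x)$. Conditioning on $\{X[0,T_{N}]\cap K=\emptyset\}$ is a Doob $h$-transform by $h(u):=P^u(X[0,T_N]\cap K=\emptyset)$, which is discrete harmonic on $B^{(0)}(N)\setminus K$. By the geometric assumptions ($x$ on a face of $A_m$, $y$ on the outward normal line from that face at distance $k/2$, and $\sqrt{3}m+k\le N$), the box $A_n(x)$ lies at distance $\Omega(k)$ from both $K\subset A_m$ and $\partial B^{(0)}(N)$. Applying the Harnack principle \eqref{Harnack} to $h$ on an $\Omega(k)$-neighborhood of $A_n(x)$ uniformly separated from $K\cup\partial B^{(0)}(N)$ yields $h(u)\asymp h(v)$ throughout that neighborhood. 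Hence the transition kernel and Green's function of the conditioned walk are comparable to their simple-random-walk counterparts at points of $A_n(x)$, and the survival-under-loop-erasure probability for the conditioned walk starting at a point of $A_n(x)$ is comparable to the unconditioned escape probability $\Es(\cdot)$ used in \eqref{ESCAPE}.

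With these Harnack comparisons in hand, each step in the path decomposition underlying \cite[Proposition 8.2]{S} transfers to the conditioned setting at the cost of a bounded multiplicative factor per point $z_i$, which gets absorbed into $C^k$. The main technical obstacle is to show that the conditioned ``escape'' event---the loop erasure both avoiding the already-traced path \emph{and} being consistent with the global avoidance of $K$---is indeed comparable to the unconditioned escape probability. This is exactly where the deep placement $z_1,\ldots,z_k\in A_n(x)$ becomes essential: the ``past'' traced path relevant for the escape event is a short-range obstacle while $K$ is a long-range one (at distance $\Omega(k)$), so the two constraints can be decoupled via the Harnack estimate above.
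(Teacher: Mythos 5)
The paper's treatment of this proposition is essentially a citation: the authors assert that Proposition~\ref{prop:npoint2} ``also follows from [S, Proposition 8.2],'' and give no further argument. Your proposal is more explicit than the paper's, and the intuition you invoke (Doob $h$-transform plus Harnack in a neighborhood of $A_n(x)$) matches the paper's one-sentence explanation preceding the proposition.

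However, there is a concrete gap. Interior Harnack applied to $h$ on a neighborhood of $A_n(x)$ handles the factors $G_D(z_{\pi_{i-1}},z_{\pi_i})\Es(d^{\pi_i})$ for $i\ge 2$, since both endpoints then lie in $A_n(x)$ where $h\asymp\mathrm{const}$. But the first factor is $G_D(z_0,z_{\pi_1})=G_D(x,z_{\pi_1})$, and $x$ is \emph{not} in $A_n(x)$: it lies on a face of $A_m$ and may be adjacent to $K$, so $h(x)$ can be very small and $h$ is far from constant along the segment from $x$ to $A_n(x)$. In the conditioned setting, the last-visit decomposition you describe does not produce the simple-random-walk Green's function but rather
\[
G^X(x,z_{\pi_1}) \;=\; \frac{h(z_{\pi_1})}{h(x)}\,G_{B^{(0)}(N)\setminus K}(x,z_{\pi_1}),
\]
and the ratio $h(z_{\pi_1})/h(x)$ can be arbitrarily large (e.g.\ of order $m/k$ when $K=A_m\setminus\{x\}$). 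For your argument to go through you need the compensating boundary estimate
\[
h(z_{\pi_1})\,G_{B^{(0)}(N)\setminus K}(x,z_{\pi_1}) \;\le\; C\, h(x)\,G_{B^{(0)}(N)}(x,z_{\pi_1}),
\]
i.e.\ that the suppression of the killed Green's function caused by $x$ sitting next to $K$ exactly offsets the growth of $h$ from $x$ to $A_n(x)$. This is a boundary-Harnack/Carleson-type inequality, not a consequence of the interior Harnack estimate on $A_n(x)$, and you neither state nor prove it. (One can verify by hand that it balances when $K=A_m\setminus\{x\}$: there $h(x)\asymp 1/m$, $h(z_{\pi_1})\asymp k/m$, $G_{B\setminus K}(x,z_{\pi_1})\asymp k^{-2}$ and $G_D(x,z_{\pi_1})\asymp k^{-1}$; but the general case needs an argument.) You should either supply this boundary estimate explicitly, or do what the paper does and invoke the generality of Shiraishi's Proposition~8.2, which is established for a class of walks already containing these conditioned walks, so that the first factor is handled there directly.
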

  
  \subsubsection{The scaling limit of LERW in $d=3$}\label{SCALING} 
  Recall the definition of $\gamma_{n}$ in \eqref{gammandef}.
  We can regard $\gamma_{n}$ as an element of ${\cal H} (\overline{\mathbb{D}} )$ (see Section \ref{metric} for ${\cal H} (\overline{\mathbb{D}} )$). Kozma (\cite{Koz}) shows that $\gamma_{n}$  converges in distribution to some random compact set ${\cal K} \in {\cal H} (\overline{\mathbb{D}} )$.
  It is proved in \cite{SS} that ${ \cal K}$ is a simple curve almost surely. Moreover, in \cite{S2}, it is shown that with probability one the Hausdorff dimension of ${\cal K}$ is equal to $\beta$.

 Finally,  let 
  \begin{equation}
  \eta_{n} (t) = \gamma_{n} ( 2^{\beta n} t ) \  \ \ 0 \le t \le 2^{- \beta n} M_{n}
  \end{equation}
  be the time-rescaled version of $\gamma_{n}$, where $M_{n}$ stands for the length of $\gamma_{n}$.  
  Regarding $\gamma_{n}$ as a continuous curve with linear 
  interpolation, we may think of $\eta_{n}$ as a random element of the metric space $\big( {\cal C} (\overline{\mathbb{D}} ), \rho \big)$ (see Section \ref{metric} for this metric space). It is proved in \cite{LS} that $\eta_{n}$ 
converges in distribution to some random continuous curve $\eta : [0, t_{\eta}] \to \overline{\mathbb{D}}$ with respect to the metric $\rho$, where $0 < t_{\eta} < \infty$ stands for the time duration of the scaling limit $\eta$. This greatly strengthens Kozma's scaling limit result derived in \cite{Koz}.

 \subsubsection{``Hittability'' and Quasi-loops of LERW}\label{qloops}
 In this subsection, we introduce two path properties of the LERW, which play key roles in the analysis of the H\"older continuity of the scaling limit.
 
 We first discuss the ``hittability'' of LERW. Heuristically, this means that with high probability, the trace of a LERW is hittable by an independent simple random walk that starts from a neighbourhood of the LERW.  To be more precise, recall the definition of the LERW $\gamma_n$ in \eqref{gammandef} and 
 write $R$ for a SRW on $2^{-n} \mathbb{Z}^{3}$ which is independent of $\gamma_{n}$. The probability law of $R$ is denoted by $P^{x}_{R}$ if we assume that $R (0) = x$. Finally, we let $T^{R}_{x,r}$ stand for the first time that $R$ exits from $B (x, r)$. 
 
The following proposition is a slightly stronger version of \cite[Theorem 3.1]{SS}, adapted to our setup.
\begin{prop}\label{prop:beurling}
For any $p\geq1$ and $0<s<u$, there exist constants $\xi=\xi(p,s,u) > 0$ and $C_3=C_3(p,s,u) < \infty$ such that for all $n \ge 1$ and $\theta \in (0, 1)$,
 \begin{align}\label{beurling}
&P \Big\{   P_{R}^{x} \Big( R[0, T^{R}_{x, \theta^s } ] \cap \gamma_{n} = \emptyset \Big) \le \theta^{\xi}, \text{ for all } x \in D_{n}  \text{ with } {\rm dist} \big( x, \gamma_{n} \big) \le \theta^{u} \Big\} \ge 1 - C_3 \theta^p.
\end{align} 
Here we set $D_n=2^{-n} \mathbb{Z}^{3}\cap \mathbb{D}$.
\end{prop}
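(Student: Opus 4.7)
The strategy is to upgrade the pointwise version of the hittability estimate from \cite[Theorem 3.1]{SS} to a uniform one by combining a union bound over a deterministic net with a Harnack-type comparison to transfer the bound from net points to arbitrary $x \in D_n$ near $\gamma_n$.

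First, I would fix a large constant $v = v(p, s, u) \gg s \vee u$ and introduce a deterministic $\theta^v$-net $\mathcal{N} \subset D_n$ of cardinality $|\mathcal{N}| \le C \theta^{-3v}$. Apply the pointwise hittability bound at each $y \in \mathcal{N}$, with polynomial-decay exponent inflated to $p + 3v$ (to absorb the eventual union bound) and slightly enlarged scale parameters $s < s' < u' < u$. Summing over $\mathcal{N}$ yields an event $\mathcal{E}$ of probability at least $1 - C \theta^p$ on which, for every $y \in \mathcal{N}$ with $\mathrm{dist}(y, \gamma_n) \le \theta^{u'}$,
\[
P^y_R\bigl(R[0, T^R_{y, \theta^{s'}}] \cap \gamma_n = \emptyset\bigr) \le \theta^{\xi_0}
\]
for some $\xi_0 = \xi_0(p, s, u) > 0$.

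On the event $\mathcal{E}$, the remaining task is to transfer the bound to arbitrary $x \in D_n$ with $\mathrm{dist}(x, \gamma_n) \le \theta^u$. Choose $y \in \mathcal{N}$ with $|x - y| \le \theta^v$; by the triangle inequality $\mathrm{dist}(y, \gamma_n) \le \theta^{u'}$, and since $v \gg s'$ we have $B^{(n)}(x, \theta^s) \subset B^{(n)}(y, \theta^{s'})$, so
\[
P^x_R\bigl(R[0, T^R_{x, \theta^s}] \cap \gamma_n = \emptyset\bigr) \le P^x_R\bigl(R[0, T^R_{y, \theta^{s'}}] \cap \gamma_n = \emptyset\bigr).
\]
The right-hand side is the value at $x$ of the discrete harmonic function $z \mapsto P^z_R\bigl(R[0, T^R_{y, \theta^{s'}}] \cap \gamma_n = \emptyset\bigr)$ on $B^{(n)}(y, \theta^{s'}) \setminus \gamma_n$. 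A Harnack comparison \eqref{Harnack} on an intermediate spherical shell centered at $y$ of radius $\theta^w$ with $s' < w < v$ bounds this by a universal constant times the value at $y$, which on $\mathcal{E}$ is at most $\theta^{\xi_0}$. Taking $\xi$ slightly smaller than $\xi_0$ then yields \eqref{beurling}.

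The main technical obstacle is the Harnack step: since $\gamma_n$ may come arbitrarily close to $x$ and $y$, a direct application of \eqref{Harnack} in the full ball $B^{(n)}(y, \theta^{s'})$ is not available. The remedy is to decompose according to whether $R$ reaches $\partial B^{(n)}(y, \theta^{w})$ before hitting $\gamma_n$: if not, the non-intersection event in question already fails; if so, both starting points $x$ and $y$ are, on scale $\theta^w$, negligibly displaced (since $|x-y| \le \theta^v \ll \theta^w$), and Harnack applies on the clean annular region between radii $\theta^w$ and $\theta^{s'}$ centered at $y$, where the harmonic function of interest is genuinely comparable. Choosing $v, w, s', u'$ consistently in terms of $p, s, u$ produces $\xi = \xi(p, s, u)$ and $C_3 = C_3(p, s, u)$ as required.
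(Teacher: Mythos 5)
Your overall architecture (deterministic net, union bound with inflated exponent, reduction to a pointwise estimate, then transfer to a general $x$ near $\gamma_n$) is in the right spirit, and the first three steps are sound. There is, however, a genuine gap in the transfer step, which is exactly the step you flag as the main technical obstacle.

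First, a small but telling slip: with $s<s'$ you have $\theta^{s'}<\theta^s$, and $|x-y|\le\theta^v\ll\theta^{s'}$, so the inclusion that actually holds is $B^{(n)}(y,\theta^{s'})\subset B^{(n)}(x,\theta^{s})$, not the reverse as you wrote. (The displayed inequality between the two escape probabilities is nonetheless in the correct direction, for this corrected reason.) More seriously, the Harnack remedy does not close the gap. The function $f(z)=P^{z}_R\bigl(R[0,T^R_{y,\theta^{s'}}]\cap\gamma_n=\emptyset\bigr)$ is harmonic on $B^{(n)}(y,\theta^{s'})\setminus\gamma_n$ and vanishes on $\gamma_n$. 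Since $\gamma_n$ is a simple path from the origin all the way to $\partial\mathbb{D}$ and passes within distance $\theta^u\ll\theta^w$ of $y$, the curve necessarily traverses the annulus between radii $\theta^w$ and $\theta^{s'}$; that region is therefore \emph{not} ``clean,'' and the interior Harnack inequality \eqref{Harnack} cannot be applied there to control $\sup f$ by $\inf f$ on $\partial B^{(n)}(y,\theta^w)$. Decomposing at the exit from $B^{(n)}(y,\theta^w)$ reduces the claim to comparing the $\gamma_n$-avoiding harmonic measures of $\partial B^{(n)}(y,\theta^w)$ seen from $x$ and from $y$; both starting points lie at distance $\lesssim\theta^u$ from the absorbing set $\gamma_n$, so this is a boundary Harnack comparison in the domain $B^{(n)}(y,\theta^w)\setminus\gamma_n$. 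No boundary Harnack principle is available for the complement of an arbitrary simple curve in $\mathbb{Z}^3$ (and $\gamma_n$ has no Lipschitz or NTA regularity one could exploit), so this comparison is simply not justified. One can concoct configurations of a simple path near $y$ where $f(x)/f(y)$ is huge even for $|x-y|\le\theta^v$; to rule these out you would already need probabilistic input on the local geometry of $\gamma_n$, at which point the ``Harnack transfer'' is doing none of the work.

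The paper sidesteps this entirely: it simply defers to the proof of \cite[Theorem~3.1]{SS}, applying Lemmas~3.2 and 3.3 there with adjusted parameters. In that argument the union bound is over a covering of $\mathbb{D}$ by boxes, and the pointwise lemma one union-bounds is formulated so that, with high probability, the local structure of $\gamma_n$ near each box already forces hittability \emph{from every} point of the box close to $\gamma_n$. In other words, the ``for all $x$ near $y$'' quantifier is baked into the probabilistic lemma about $\gamma_n$ itself, rather than being recovered deterministically by comparing two harmonic functions. If you replaced your Harnack step with a pointwise lemma of this strengthened, box-uniform form (which is what Lemmas~3.2--3.3 of \cite{SS} supply), the rest of your outline would go through.

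Two minor further remarks: you never identify the precise ``pointwise hittability bound'' you are invoking, so the reader cannot check that it is available with the inflated exponent $p+3v$; and since the statement is asserted for all $\theta\in(0,1)$, the inclusions you need (which hold only for $\theta$ sufficiently small given $s,s',u,u',v,w$) should be handled by absorbing the bounded range of $\theta$ into $C_3$.
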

To see that Proposition \ref{prop:beurling} holds, it suffices to follow the proof of \cite[Theorem 3.1]{SS} and apply Lemmas 3.2 and 3.3 of \cite{SS} with suitable choice of parameters.
 
 \medskip
 
 ``Hittability'' estimates such as \eqref{beurling} enable us to exclude from $\gamma_{n}$ the existence of ``quasi-loops'' which we will define immediately below. 
 \begin{dfn}
We then say a path $\lambda \subset 2^{-n} \mathbb{Z}^{3}$ has an $(s_{1}, s_{2})$-{\bf quasi-loop} at $x$ for $0 < s_{1} < s_{2}$ and $x \in 2^{-n} \mathbb{Z}^{3}$ if there exist two times $k_{1} \le k_{2}$ such that 
 \begin{equation*}
 \lambda (k_{i} ) \in B ( x, s_{1})  \ \ \text{ for } \ i=1,2   \  \ \ \text{ and }  \ \ \  \lambda[k_{1}, k_{2}] \not\subset B (x, s_{2}).
 \end{equation*}
 The set of all such $x$'s is denoted by $\text{QL} ( s_{1}, s_{2}; \lambda)$.
 \end{dfn}
 
The following proposition, which states that large quasi-loops are unlikely to exist for $\gamma_n$, is a paraphrased version of \cite[Theorem 6.1]{SS}.
\begin{prop}
One can fix universal constants
 \begin{equation}\label{Ldef}
     \mbox{$L\geq 2$, $b\geq 1$, $C_4 <\infty$ }
 \end{equation}
such that for all $0<\theta <1$ and $n \ge 1$,
 \begin{equation}\label{quasi}
 P \Big( \text{QL} \big( \theta^{L}, \theta ; \gamma_{n} \big) \neq \emptyset \Big) \le C_4 \theta^{b}.
 \end{equation}
\end{prop}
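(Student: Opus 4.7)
My plan is a union bound over a $\theta^L$-net of $\overline{\mathbb{D}}$ combined with a per-site estimate exploiting the domain Markov property and the hittability of LERW. Let $\mathcal{N} \subset 2^{-n}\mathbb{Z}^3 \cap \overline{\mathbb{D}}$ be a maximal $\theta^L$-separated set, so $|\mathcal{N}| \leq C\theta^{-3L}$ and every lattice point in $\overline{\mathbb{D}}$ lies within $\theta^L$ of some $x \in \mathcal{N}$. On $\{\text{QL}(\theta^L, \theta; \gamma_n) \neq \emptyset\}$, some $x \in \mathcal{N}$ must satisfy: $\gamma_n$ enters $B(x, 2\theta^L)$, exits $B(x, \theta/2)$, and then re-enters $B(x, 2\theta^L)$; call this single-site event $\widetilde{\text{QL}}_x$. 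It then suffices to prove $P(\widetilde{\text{QL}}_x) \leq C\theta^{3L + b}$ uniformly in $x$.

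For a fixed $x$, I would condition on $\gamma^{(1)} := \gamma_n[0, \tau]$, where $\tau$ is the first hitting time of $B(x, 2\theta^L)$ by $\gamma_n$. By the domain Markov property \eqref{DMP}, the continuation $\gamma_n[\tau, M_n]$ has the law of the loop-erasure of an SRW $X$ started at $\gamma_n(\tau)$ and conditioned to avoid $\gamma^{(1)}$ before exiting $\mathbb{D}$. The event $\widetilde{\text{QL}}_x$ forces $X$ first to leave $B(x, \theta/2)$ and then to re-enter $B(x, 2\theta^L)$. The crucial observation is that $\gamma_n(\tau) \in B(x, 2\theta^L) \cap \gamma^{(1)}$, so any such re-entry of $X$ compels it to approach its own past to within distance $O(\theta^L)$. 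By the hittability estimate \eqref{beurling}, applied locally near $\gamma_n(\tau)$, an unconditioned walk in such a neighbourhood hits $\gamma^{(1)}$ with probability at least $1 - \theta^{\xi}$ before exiting a comparable ball. Converting this into the cost of the avoidance conditioning on $X$ via a non-intersection comparison, together with the initial hitting cost of $B(x, 2\theta^L)$ (of order a positive power of $\theta^L$) and a $3$D gambler's-ruin-type bound for the return excursion from $\partial B(x, \theta/2)$ back into $B(x, 2\theta^L)$, gives a per-site bound $P(\widetilde{\text{QL}}_x) \leq C \theta^{\alpha(\beta) L}$ with $\alpha(\beta) > 3$. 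Summing over $\mathcal{N}$ and taking $L$ sufficiently large produces the claimed bound.

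The main obstacle is making the avoidance-cost argument rigorous and uniform in the random initial segment $\gamma^{(1)}$: Proposition \ref{prop:beurling} as stated applies to $\gamma_n$ itself, not to arbitrary initial segments, so one must first argue (via Wilson's algorithm or a Markov decomposition, and comparison with the infinite LERW through \eqref{Masson}) that an analogue of \eqref{beurling} is inherited by $\gamma^{(1)}$, and then quantify the Radon--Nikodym derivative introduced by conditioning $X$ to avoid $\gamma^{(1)}$. A secondary technical nuisance is handling sites $x \in \mathcal{N}$ near the origin or $\partial \mathbb{D}$, where the one-point function of LERW degenerates; these contributions should be controlled separately by a direct counting argument based on the multi-point bound of Proposition \ref{prop:npoint1}.
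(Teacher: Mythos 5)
The paper does not reprove this proposition; it is stated as ``a paraphrased version of \cite[Theorem 6.1]{SS}'' and the only additional content supplied is a remark that in the proof of that theorem one can fix the exponent there (their $\alpha$) as $1/2$ via a gambler's ruin argument, which is what yields the explicit $b$ here. Your proposal, by contrast, attempts to reprove the quasi-loop estimate from scratch, so the comparison is really between your sketch and the absorbed proof of \cite{SS}.

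At a high level, your plan (union bound over a $\theta^L$-net, domain Markov decomposition at the first hitting time of the small ball, hittability to penalize the return) reflects the spirit of the argument in \cite{SS}, and your exponent heuristic that ``taking $L$ sufficiently large'' works is consistent with $\beta<2$. You also correctly flag the two central difficulties. However, precisely those two flagged items are where the actual work lies, and the sketch does not resolve them. First, Proposition~\ref{prop:beurling} as invoked gives hittability of the \emph{whole} $\gamma_n$; it neither directly transfers to the initial segment $\gamma^{(1)}=\gamma_n[0,\tau]$ (a subset is, if anything, \emph{harder} to hit) nor to the realized sub-path conditional on which $X$ evolves. What you would need are hittability statements for initial and terminal pieces of LERW on an event of high probability, with uniform constants -- exactly the content of \cite[Lemmas 3.2, 3.3]{SS} that the present paper refers to when justifying Proposition~\ref{prop:beurling} itself. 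Second, converting hittability of $\gamma^{(1)}$ into a quantitative suppression of the re-entry probability for the \emph{conditioned} walk $X$ involves a careful decomposition (Harnack comparison away from $\gamma^{(1)}$, a separate estimate near $\gamma^{(1)}$, and comparison of the avoidance probability from the re-entry point versus from the starting point) which your ``Radon--Nikodym derivative'' sentence names but does not carry out. Without these ingredients the per-site bound $P(\widetilde{\mathrm{QL}}_x)\le C\theta^{3L+b}$ is not established, and Proposition~\ref{prop:npoint1} alone cannot substitute for them, since the multi-point bound carries no information about the time-ordering/backtracking constraint that makes a quasi-loop rare. In short: your route is a genuinely different one from the paper's (direct proof vs.\ citation), it is pointed in the right direction, but the two acknowledged gaps are exactly the substance of \cite[Theorem 6.1]{SS} and would have to be filled for the argument to stand.
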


Note that although \cite[Theorem 6.1]{SS} did not specify the possible value of $\alpha$ (which corresponds to $b/2$ here), it is not difficult to see that in its proof, one can fix $\alpha$ as $1/2$ in the inequality \cite[(6.1)]{SS}, ibid. by a gambler's ruin type of argument.

\subsection{Uniform spanning tree}\label{UST}
We will discuss the (wired) uniform spanning tree (WUST) in this subsection. For simplicity we will only focus on the WUST on $D_n=2^{-n} \mathbb{Z}^{3}\cap \mathbb{D}$. We regard $\partial D_{n}$ as a single vertex. With slight abuse of notation, we still write $\partial D_{n} $ for this vertex. Let  $G_n$ be the induced graph.
A spanning tree on the graph $G_{n}$ is called a {\it wired} spanning tree on $D_{n} \cup \partial D_{n}$.

A random tree obtained by choosing uniformly random among all wired spanning trees on $G_n$ is called the wired uniform spanning tree on $G_{n}$ which we denote by ${\cal U}_{n}$. 
See \cite{BLPS} or \cite[Chapter 9]{LawLim} for a more thorough introduction on the model.

To generate ${\cal U}_{n}$, we employ Wilson's algorithm (see \cite{Wil} for more details), which can be described in the following way:

 \begin{itemize}
\item[(i)] Choose an ordering of $D_{n} = \{ x_{1}, x_{2}, \cdots , x_{m} \}$ arbitrarily.

\item[(ii)] Let $R^{1}$ be the SRW started at $x_{1}$ and write $T^{1}$ for the first time that it hits $\partial D_{n}$. Let ${\cal U}^{1}$ be the tree formed of the edges of
 the loop-erasure $\text{LE} (R^{1} [0, T^{1} ] )$.

\item[(iii)] Take another SRW $R^{j+1}$ started from $x_{j+1}$ for $j \ge 1$. Let $T^{j+1}$  be the first time that $R^{j+1}$ hits ${\cal U}^{j}$. Let ${\cal U}^{j+1}$ be the tree formed of the union of the edges in $ \text{LE} (R^{j+1} ) $ and ${\cal U}^{j}$.

\item[(iv)] Repeat Step (iii) and stop when all vertices in $D_{n}$ are contained in the tree.
\end{itemize}
In \cite{Wil}, it is shown that the distribution of the final output tree ${\cal U}_{n}$ is the same as the WUST on $G_n$ for any ordering of $D_{n}$. 

Moreover, the LERW $\gamma_{n}$ has the same distribution as $\gamma_{0, \partial}$ where $\gamma_{0, \partial}$ stands for the unique simple path in ${\cal U}_{n}$ connecting the origin and $\partial D_{n} $.
Finally, we write $d_{{\cal U}_{n}} ( \cdot , \cdot )$ for the graph distance on ${\cal U}_{n}$.

\section{The upper bound of the modulus of continuity}\label{sec:3}
The main goal of this section is to prove the first claim of Theorem \ref{main}, as paraphrased in Proposition \ref{0626-a-1}, where we show the $h$-H\"older continuity of the scaling limit for all $h < {1}/{\beta}$. The crucial step is to derive modulus-of-continuity results on the rescaled LERW, which we summarize in Proposition \ref{HC-upper}. To obtain this proposition, in Section \ref{sec:3.1}, we derive some regularity properties for the LERW path and then, as discussed below Remark \ref{mainrmk}, in Section \ref{sec:3.2}, we apply an epsilon-net argument with the help of results from Section \ref{sec:3.1} to control the length of the whole LERW path with some tail probability estimates which which were originally only available for the beginning part of the LERW.

A summary of notation employed in this section is offered in Table \ref{symbols-3} at the end of this section.

\subsection{Regularity of LERW paths}\label{sec:3.1}

In this subsection, we will prove Proposition \ref{lem:Festimate}, which summarizes various estimates on regularity properties of a LERW path. In particular, we show that with high probability, a LERW path 
\begin{itemize}
    \item{\bf a)} can be covered by not too many small balls;
    \item{\bf b)} can be hit by a simple random walk starting nearby;
    \item{\bf c)} does not contain big ``quasi-loops''.
\end{itemize}
See \eqref{holder-a-2} for precise descriptions of these properties.

We now consider 
\begin{equation}
    \mbox{$\epsilon \in (0, 1/10 )$ and  $\delta \in (0, 1/10)$ such that $\delta^{-\epsilon/2} > 100$}.
\end{equation} We pick the value of $\epsilon$ later according to the value of $h$ in the proof of Proposition \ref{0626-a-1} and let $\delta$ tend to 0 gradually in the same proof.

Recall the definition of $\gamma_{n}$ in \eqref{gammandef}. We define a sequence of random times $\tau_{l}$ created by $\gamma_{n}$ as follows. Let $\tau_{0} = 0$. We define
\begin{equation}\label{holder-2-2-1}
\tau_{l} := \inf \Big\{ j \ge \tau_{l-1} \ \Big| \ | \gamma_{n} (j) - \gamma_{n} (\tau_{l-1} ) | \ge  \delta^{\frac{1}{\beta} - \epsilon_{0} } \Big\},  \ \ \ \text{ for } l \ge 1,
\end{equation}
where 
\begin{equation}\label{holder-0}
\epsilon_{0}:=\epsilon/100.
\end{equation}

We write 
\begin{equation}\label{0726-a-1}
N: = \max \, \{ l \ge 1 \ | \ \tau_{l} \le {\rm len} (\gamma_{n} ) \}.
\end{equation}

Writing $x_{l} = \gamma_{n} (\tau_{l} )$, we define three events $F_{(i)}$ ($i=1,2,3$) as follows (note that they correspond to properties {\bf a)} through {\bf c)} discussed at the beginning of this subsection): 
\begin{align}\label{holder-a-2}
&F_{(1)} = \big\{ N \le \delta^{-3}  \big\},  \notag \\
&F_{(2)} = \Big\{ \text{For all $x \in D_{n}$ with $\text{dist} (x , \gamma_{n} ) \le r $, } P^{x}_{R} \Big( R \big[ 0, T^{R}_{x, \sqrt{r}} \big] \cap \gamma_{n} = \emptyset \Big) \le \delta^{5} \Big\},  \\
&F_{(3)} = \Big\{ \text{For all $l= 0, 1, \cdots , N$, }  \big( \gamma_{n} [0, \tau_{l-1} ] \cup \gamma_{n} [\tau_{l +1} , {\rm len}  (\gamma_{n} ) ]  \big) \cap B \big( x_{l}, r^{\frac{1}{3}} \big) = \emptyset \Big\}\notag 
\end{align}
(with convention that $\gamma_{n} [0, \tau_{-1} ] = \emptyset$ in the event $F_{(3)}$), where $L$ is the constant in \eqref{Ldef} and 
\begin{equation}\label{rdef}
    r:=\delta^{\max \{ 5/\xi(10,1/2,1),12L \}}
\end{equation}
where $\xi(\cdot,\cdot,\cdot)$ is the parameter defined in Proposition \ref{prop:beurling}. The motivation of choosing such an $r$ will be clear later as we estimate the asymptotic probability of the events defined above.

Let 
\begin{equation}
    F:=F_{(1)}\cap F_{(2)} \cap F_{(3)}.
\end{equation}
We now claim that $F$ happens with very high probability. 
\begin{prop}\label{lem:Festimate}
There exists $C_5 <\infty$ and $c_6>0$ such that for all $n\geq 0$,
\begin{equation}\label{holder-2-3}
P ( F ) \ge 1 - C_5 \delta^{c_6}.
\end{equation}
\end{prop}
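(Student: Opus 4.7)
The plan is to estimate $P(F_{(i)}^c)$ for each $i \in \{1,2,3\}$ separately and conclude by a union bound. Set $\mu := \delta^{1/\beta - \epsilon_0}$ for brevity, so that successive chunks satisfy $|x_l - x_{l-1}| \ge \mu$.

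For $F_{(2)}$, this is a direct application of Proposition \ref{prop:beurling} with parameters $s = 1/2$, $u = 1$, $p = 10$, and $\theta = r$. The conditions $\theta^u = r$ and $\theta^s = \sqrt{r}$ match the definition of $F_{(2)}$ verbatim, and $r^{\xi(10,1/2,1)} \le \delta^5$ is guaranteed by the first option in the max in \eqref{rdef}. This yields $P(F_{(2)}^c) \le C_3 r^{10}$, which is polynomial in $\delta$.

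For $F_{(3)}$, I would argue as follows. Suppose $F_{(3)}$ fails at some $l$, with $\gamma_n(j) \in B(x_l, r^{1/3})$ for $j \notin [\tau_{l-1}, \tau_{l+1}]$. Between $j$ and $\tau_l$ the LERW must leave $B(x_l, \mu)$, since at least one of $x_{l-1}, x_{l+1}$ is outside at distance $\ge \mu$. Thus $x_l \in \text{QL}(r^{1/3}, \mu; \gamma_n)$, so $F_{(3)}^c \subset \{\text{QL}(r^{1/3}, \mu; \gamma_n) \ne \emptyset\}$. The $12L$ power in \eqref{rdef} ensures $r^{1/3} \le \delta^{4L} \le \mu^L$ (using $L(1/\beta - \epsilon_0) \le 4L$), so $\text{QL}(r^{1/3}, \mu) \subset \text{QL}(\mu^L, \mu)$; \eqref{quasi} applied with $\theta = \mu$ then yields $P(F_{(3)}^c) \le C_4 \mu^b$, again polynomial in $\delta$.

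For $F_{(1)}$, note that the true scale is $E[N] \asymp \mu^{-\beta} = \delta^{-(1 - \beta \epsilon_0)}$ (from the Hausdorff dimension of $\gamma_n$ being $\beta \le 5/3$), which is much smaller than $\delta^{-3}$, so the bound is quite loose. I would prove it via:
\begin{enumerate}
\item[(a)] by \eqref{exp-tail}, $M_n \le C \cdot 2^{\beta n}$ with high probability;
\item[(b)] using the domain Markov property \eqref{DMP} combined with \eqref{Masson}, each chunk $\gamma_n[\tau_{l-1}, \tau_l]$ is comparable in law (up to multiplicative constants) to an unconditioned LERW exit from $B(x_{l-1}, \mu)$, which at the rescaled level $m = n + \log_2(1/\mu)$ has exit-time mean $\asymp 2^{\beta m} = (\mu 2^n)^\beta$ and a two-sided exponential tail from \eqref{exp-tail'};
\item[(c)] applying the lower tail from \eqref{exp-tail'} with $r = \log^4(1/\delta)$ and union-bounding over the first $\delta^{-3}$ chunks produces the event that $\tau_l - \tau_{l-1} \ge c (\mu 2^n)^\beta$ for all $l \le \delta^{-3}$ with super-polynomially high probability;
\item[(d)] on the intersection of the events in (a) and (c), $N \cdot c(\mu 2^n)^\beta \le \tau_N \le M_n \le C 2^{\beta n}$, giving $N \le C \mu^{-\beta} \le \delta^{-3}$ for small $\delta$.
\end{enumerate}

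The principal technical obstacle is step (b): each chunk is a LERW conditioned on avoiding the past portion of $\gamma_n$, and transferring the unconditioned tail bound \eqref{exp-tail'} to this conditional setting requires a careful application of \eqref{Masson} restricted to the smaller ball $B(x_{l-1}, \mu)$, together with some care in the borderline regime where the mesh $2^{-n}$ is comparable to $\mu$ (in which case the trivial inequality $\tau_l - \tau_{l-1} \ge 1$ already suffices to derive (d)).
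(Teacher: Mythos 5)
Your handling of $F_{(2)}$ and $F_{(3)}$ matches the paper's proof in substance. For $F_{(2)}$ both invoke Proposition~\ref{prop:beurling} with $(p,s,u)=(10,1/2,1)$ and $\theta=r$. For $F_{(3)}$ both reduce $F_{(3)}^{c}$ to a quasi-loop event and apply \eqref{quasi}; the paper takes $\theta=r^{1/(3L)}$ where you take $\theta=\mu$, but both parameter choices verify the containment and give a polynomial bound in $\delta$, so the difference is cosmetic.

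Your $F_{(1)}$ is where you genuinely diverge, and there is a real gap at your step~(b). You wish to compare the law of the increment $\gamma_n[\tau_{l-1},\tau_l]$ to that of an unconditioned LERW exiting $B(x_{l-1},\mu)$, in order to import the lower tail from \eqref{exp-tail'}. But the domain Markov property \eqref{DMP} conditions the \emph{underlying simple random walk} (not the loop erasure) to avoid the already-traced portion $\gamma_n[0,\tau_{l-1}]$, and this walk starts at $x_{l-1}$, a point on the boundary of the set it must avoid, which is precisely the hardest regime for such comparisons. The estimate \eqref{Masson} compares two \emph{unconditioned} LERWs (one in a finite domain, one infinite) and says nothing about a walk conditioned to avoid a set that abuts its starting point; Proposition~\ref{prop:npoint2} gives one-sided hit-probability bounds, not a two-sided comparison of path laws; and the Harnack principle only kicks in once the walk has separated from the conditioning set. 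Without additionally excluding pathological geometries of the past path near $x_{l-1}$, one cannot rule out the conditioning substantially shortening the loop erasure, so the lower tail on $\tau_l-\tau_{l-1}$ is not a consequence of the cited ingredients. You flag this as ``the principal technical obstacle,'' but it is in fact a missing step rather than a routine application.

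The paper's proof of Lemma~\ref{lem:F1} avoids all of this by a much simpler observation: since $\gamma_n$ is a subsequence of the underlying SRW $S$, the count $N$ of $\delta^{1/\beta-\epsilon_0}$-jumps of $\gamma_n$ is dominated by the analogous count $N'$ of $3^{-1}\delta^{1/\beta-\epsilon_0}$-jumps of $S$ (via an explicit injection of jump times). Bounding $N'$ then needs only two elementary simple-random-walk estimates---an upper tail for $T_1$ and a lower tail for each increment $\tau'_l-\tau'_{l-1}$, both from \cite[Proposition 2.4.5]{LawLim}---with no loop erasure, no domain Markov property, and no conditioning issue. This yields the stronger bound $P(F_{(1)}^c)\le C\exp\{-c\delta^{-c}\}$, though only a polynomial bound is needed here since $F_{(2)}$ and $F_{(3)}$ already contribute polynomially.
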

\begin{proof}
The claim follows from Lemmas \ref{lem:F1} through \ref{lem:F3}.
\end{proof}
We now estimate the probability of $F_{(i)}$'s separately.
\begin{lem}\label{lem:F1}
There exist $c >0$ and $C < \infty$ such that
\begin{equation}\label{N}
P(F_{(1)})\geq 1- C \exp \big\{ - c \delta^{-c} \big\}.
\end{equation}
\end{lem}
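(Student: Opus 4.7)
The plan is to bound $N$ by combining a high-probability upper bound on $M_n$ with a high-probability uniform lower bound on each segment length $\tau_l - \tau_{l-1}$, then taking a union bound over $l \leq \lceil\delta^{-3}\rceil$. Write $\rho := \delta^{1/\beta - \epsilon_0}$ and fix some small $\epsilon_1 \in (0, 1)$.

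First, I would apply \eqref{exp-tail} with $r = \delta^{-\epsilon_1}$ to get $P(M_n > 2^{\beta n}\delta^{-\epsilon_1}) \leq c_1 \exp(-c_2 \delta^{-\epsilon_1/2})$. Second, to lower-bound each $\tau_l - \tau_{l-1}$, I would condition on $\gamma_n[0,\tau_{l-1}]$ and apply the domain Markov property \eqref{DMP}, which realizes $\gamma_n[\tau_{l-1},\tau_l]$ as the initial portion (up to first exit from $B(x_{l-1},\rho)$) of the loop-erasure of an SRW from $x_{l-1}$ conditioned to avoid the past. A Masson-type comparison (cf.\ \eqref{Masson} and the setup of Proposition \ref{prop:npoint2}) between this conditioned LERW and the infinite LERW from $x_{l-1}$ on $2^{-n}\mathbb{Z}^3$, combined with scale invariance (rescaling the lattice by $\rho^{-1}$ so that $B(x_{l-1},\rho)$ becomes the unit ball on $2^{-n'}\mathbb{Z}^3$ with $2^{-n'} = 2^{-n}\rho^{-1}$) and the lower tail in \eqref{exp-tail'} applied with the same $r = \delta^{-\epsilon_1}$, should give
\[ P\big(\tau_l \leq M_n,\ \tau_l - \tau_{l-1} \leq 2^{\beta n}\delta^{1-\beta\epsilon_0 + \epsilon_1}\big) \leq C\exp(-c\delta^{-\epsilon_1/2}) \]
uniformly in $l$. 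Boundary cases where $x_{l-1}$ lies within $O(\rho)$ of $\partial\mathbb{D}$ need minor separate handling, since a full $\rho$-displacement is unlikely to complete there and only a bounded number of such $l$'s can arise.

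Summing over $l \leq \lceil\delta^{-3}\rceil$ (the prefactor $\delta^{-3}$ being absorbed by the super-polynomial decay in $\delta^{-1}$), on the intersection of the two good events, $N > \delta^{-3}$ would force $N \geq \lceil\delta^{-3}\rceil$ and hence
\[ M_n \geq \tau_{\lceil\delta^{-3}\rceil} \geq \lceil\delta^{-3}\rceil \cdot 2^{\beta n}\delta^{1-\beta\epsilon_0+\epsilon_1} \geq 2^{\beta n}\delta^{-2-\beta\epsilon_0+\epsilon_1} > 2^{\beta n}\delta^{-\epsilon_1}, \]
with the last inequality using $-2-\beta\epsilon_0+\epsilon_1 < -\epsilon_1$ (automatic for $\epsilon_1 < 1 + \beta\epsilon_0/2$, hence for any $\epsilon_1<1$ since $\epsilon_0 = \epsilon/100$). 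This contradicts the upper bound on $M_n$, yielding \eqref{N} after relabeling constants. The main obstacle is making the Masson-type comparison rigorous in the DMP-conditioned setting: since the past LERW touches $x_{l-1}$, the conditioning in \eqref{DMP} is non-trivial at $x_{l-1}$, so \eqref{Masson} does not apply directly. The needed uniform comparison between the conditioned LERW inside $B(x_{l-1},\rho)$ and the infinite LERW from $x_{l-1}$ should nevertheless follow from arguments parallel to those underlying Proposition \ref{prop:npoint2}.
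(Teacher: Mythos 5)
Your overall strategy --- upper bound on total length, lower bound on each segment, union bound over $l\leq\delta^{-3}$ --- has the right shape, but you implement it at the LERW level, whereas the paper implements it at the SRW level, and the difference matters. The step you flag as the main obstacle is a genuine gap, not a routine detail. Both \eqref{Masson} and Proposition~\ref{prop:npoint2} rely crucially on the relevant region being separated from the conditioning set: \eqref{Masson} involves no conditioning at all, and the hypothesis of Proposition~\ref{prop:npoint2} is precisely that the points $z_1,\ldots,z_k$ lie in the box $A_n(x)$, which by construction sits at a definite distance from $K$. In your setting the segment $\gamma_n[\tau_{l-1},\tau_l]$ lives in $B(x_{l-1},\rho)$ where $x_{l-1}$ is the tip of the past LERW, so the conditioning set passes through the \emph{center} of the ball you want to cross. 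There is no separation at the relevant scale, so ``arguments parallel to Proposition~\ref{prop:npoint2}'' do not transfer. Nor can you fall back on monotonicity: conditioning the walk to avoid the past has no a priori sign for the length of its loop-erasure, so you cannot simply dominate the conditioned segment length from below by the unconditioned one.

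The paper sidesteps all of this by never conditioning. It defines analogous displacement stopping times $\tau'_l$ for the underlying SRW $S$ with threshold $3^{-1}\delta^{1/\beta-\epsilon_0}$, and observes that since $\gamma_n\subset S[0,T_1]$ as sets and the marks $x_l$ are pairwise $\delta^{1/\beta-\epsilon_0}$-separated, each $x_l$ falls in a distinct SRW interval $[\tau'_{m-1},\tau'_m)$; this gives an injection $\{0,\ldots,N\}\hookrightarrow\{0,\ldots,N'\}$ and hence $N\leq N'$. Bounding $N'$ then needs only elementary SRW tail estimates (upper tail on $T_1$ and lower tail on $\tau'_l-\tau'_{l-1}$, both from \cite[Proposition~2.4.5]{LawLim}) together with the single fact $\beta>1$ to make the exponents close. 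No LERW length estimates, no domain Markov property, and no comparison with conditioned walks are involved. This reduction to the SRW is the decisive idea your proof is missing; it is both simpler and strictly more robust than the route you propose.
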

This lemma is in essence a coarse version of H\"older continuity result we seek for in this work, and although in principle we can give much more precise bounds than this one here, this bound already suffices for the estimates to follow.
\begin{proof}
 Consider $S$ the SRW started at the origin on $2^{-n} \mathbb{Z}^{3}$ whose loop-erasure is $\gamma_n$.  In order to give a bound on $N$, we also define a sequence of stopping times $\tau_{l}'$ for $S$, similar to $\tau_l$ for $\gamma_n$: set $\tau_{0}' = 0$ and let
\begin{equation*}
\tau_{l}' = \inf \Big\{ j \ge \tau_{l-1}' \ \Big| \ | S (j) - S (\tau_{l-1}' ) | \ge 3^{-1} \cdot  \delta^{\frac{1}{\beta} - \epsilon_{0}} \Big\},  \ \ \ \text{ for } l \ge 1.
\end{equation*}
Set $N'$ as the largest integer satisfying that $\tau_{N'} \le T_1$. Observe that there is an injective map $\Sigma:\{0,\ldots,N\}\to\{0,\ldots,N'\}$ such that $\gamma_{\tau_j}\in B \Big( S \big( \tau'_{\Sigma(j)} \big), 3^{-1} \delta^{\frac{1}{\beta} - \epsilon_{0}} \Big) $, hence $N \le N'$.
Therefore, the proof is completed once we obtain the inequality as follows.
\begin{equation}\label{holder-4}
 P ( N' \ge \delta^{-3} )  \le C \exp \big\{ - c \delta^{-c} \big\}.
\end{equation}

To prove \eqref{holder-4}, let  $\upsilon = 3^{-1} \cdot  \delta^{\frac{1}{\beta} - \epsilon_{0}}$. It follows from \cite[Proposition 2.4.5]{LawLim} that 
\begin{equation*}
P \Big( T_{1}  \ge \delta^{-\frac{1}{2}} \, 2^{2n}  \Big) \le C \exp \big\{ - c \delta^{-\frac{1}{2}} \big\}
\end{equation*}
and
\begin{equation*} P \Big( \tau_{l}' - \tau_{l-1}' \le \delta^{\frac{1}{2}} \, \upsilon^{2} \, 2^{2n} \Big) \le C \exp \big\{ - c \delta^{-\frac{1}{2}} \big\}  \  \text{ for each } l \ge 1.
\end{equation*}
Setting ${\cal H}_{l} = \big\{ \tau_{l}' - \tau_{l-1}' \ge \delta^{\frac{1}{2}} \, \upsilon^{2} \, 2^{2n}  \big\}$ for $l \ge 1$, we can conclude that 
\begin{align*}
&\quad \; P ( N' \ge \delta^{-3} )  \le P \Big( N' \ge \delta^{-3}, \ {\cal H}_{l}  \text{ holds for all } l =1, \cdots ,  \lfloor \delta^{-3} \rfloor  \Big)  +  C \delta^{-3}  \exp \big\{ - c \delta^{-\frac{1}{2}} \big\} \\
&\le P \Big(  T_{1}  \ge \tau_{\lfloor \delta^{-3} \rfloor }', \  {\cal H}_{l}  \text{ holds for all } l =1, \cdots ,  \lfloor \delta^{-3} \rfloor  \Big)  +  C \delta^{-3}  \exp \big\{ - c \delta^{-\frac{1}{2}} \big\} \\
&\le P \big( T_{1}  \ge c \delta^{-\frac{1}{2}} \, 2^{2n} \big) + C \delta^{-3}  \exp \big\{ - c \delta^{-\frac{1}{2}} \big\} \le C \exp \big\{ - c \delta^{-c} \big\},
\end{align*}
where we used $\beta > 1$ in the third inequality to show that 
$$\lfloor \delta^{-3} \rfloor \cdot \delta^{\frac{1}{2}} \, \upsilon^{2} \, 2^{2n}  \ge c \, \delta^{-3 + \frac{1}{2} + \frac{2}{\beta} - 2 \epsilon_{0}} \, 2^{2n} \ge c \, \delta^{-\frac{1}{2}} \, 2^{2n}.$$
(Here $\lfloor a \rfloor$ stands for the greatest integer less than or equal to $a$.)
This finishes the proof.
\end{proof}

Recall the definition of $C_3$ in Proposition \ref{prop:beurling}.
\begin{lem}\label{lem:F2}
$$
P(F_{(2)})\geq 1-C_3r^{10}.
$$
\end{lem}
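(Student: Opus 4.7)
The plan is to derive Lemma \ref{lem:F2} as an essentially immediate consequence of Proposition \ref{prop:beurling}, by choosing its three parameters appropriately to match the definition of $F_{(2)}$.

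Specifically, I would apply Proposition \ref{prop:beurling} with $p = 10$, $s = 1/2$ and $u = 1$, which yields a constant $\xi = \xi(10, 1/2, 1) > 0$ and the constant $C_3 = C_3(10, 1/2, 1)$ appearing in the statement of the lemma. Then I would substitute $\theta = r$. Under this choice, the radii match directly: $\theta^u = r$ corresponds to the condition ${\rm dist}(x, \gamma_n) \le r$ in $F_{(2)}$, while $\theta^s = \sqrt{r}$ corresponds to the exit ball $B(x, \sqrt{r})$ for the random walk $R$. This gives
\begin{equation*}
P\Big\{ P_R^x\big( R[0, T^R_{x, \sqrt r}] \cap \gamma_n = \emptyset \big) \le r^{\xi}, \text{ for all } x \in D_n \text{ with } {\rm dist}(x, \gamma_n) \le r \Big\} \ge 1 - C_3 r^{10}.
\end{equation*}

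The remaining step is to verify that $r^\xi \le \delta^5$, so that the event inside the probability above is contained in $F_{(2)}$. This is built into the definition of $r$ in \eqref{rdef}: since $r = \delta^{\max\{5/\xi(10,1/2,1),\, 12L\}}$, in particular $r \le \delta^{5/\xi}$, hence $r^\xi \le \delta^5$. Combining the two yields $P(F_{(2)}) \ge 1 - C_3 r^{10}$, as desired.

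There is no real obstacle here: the entire content of the lemma is a careful bookkeeping of parameters, and the choice of $r$ in \eqref{rdef} was precisely tuned so that both this lemma and the analogous statement for $F_{(3)}$ (where the exponent $12L$ is relevant via the quasi-loop estimate \eqref{quasi}) go through. The only care needed is to confirm that the quantifier ``for all $x \in D_n$'' inside the event in Proposition \ref{prop:beurling} aligns with the ``for all $x \in D_n$'' in the definition of $F_{(2)}$, which it does verbatim.
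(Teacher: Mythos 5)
Your proof is correct and takes exactly the same route as the paper: apply Proposition \ref{prop:beurling} with $(p,s,u)=(10,1/2,1)$ and $\theta=r$, then observe that $r^{\xi}\le\delta^5$ by the definition of $r$ in \eqref{rdef}. Nothing to add.
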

Note that the choice of $10$ is arbitrary.
\begin{proof}
The claim follows directly from \eqref{beurling} by letting 
$r=\theta$, $(p,s,u)=(10,1/2,1)$ and noting that $r^{\xi(10,1/2,1)}\leq\delta^{5}$ by definition of $r$ in \eqref{rdef}.
\end{proof}

\begin{lem}\label{lem:F3}
There exist $C_8 <\infty$ and $c_9 >0$ such that
 $$ P(F_{(3)})\ge 1 - C_8 \delta^{c_9}.$$
\end{lem}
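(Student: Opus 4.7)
The plan is to reduce the failure of $F_{(3)}$ to the existence of a quasi-loop on $\gamma_n$, and then invoke the quasi-loop bound \eqref{quasi}. Set $\theta := \delta^{1/\beta - \epsilon_{0}}$, so that the scale at which the $\tau_l$'s are spaced corresponds precisely to $\theta$.

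The first step is to observe that if $F_{(3)}$ fails, there exist $l \in \{0, 1, \ldots, N\}$ and a time $k$ with either $k \le \tau_{l-1}$ or $k \ge \tau_{l+1}$ such that $\gamma_n(k) \in B(x_l, r^{1/3})$. I claim this forces $x_l \in \text{QL}(r^{1/3}, \theta; \gamma_n)$. In the ``$k \ge \tau_{l+1}$'' subcase, between times $\tau_l$ and $k$ the path passes through $\gamma_n(\tau_{l+1})$, which by the definition \eqref{holder-2-2-1} lies at distance at least $\theta$ from $x_l$. In the ``$k \le \tau_{l-1}$'' subcase, the analogous argument uses $x_{l-1} = \gamma_n(\tau_{l-1})$, which is at distance at least $\theta$ from $x_l$, again by \eqref{holder-2-2-1}. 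The boundary cases cause no issue: for $l = 0$ only the ``after'' subcase applies by the stated convention $\gamma_n[0,\tau_{-1}] = \emptyset$, and for $l = N$ only the ``before'' subcase applies since $\tau_{N+1} > {\rm len}(\gamma_n)$.

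The second step is to verify the size comparison $r^{1/3} \le \theta^L$, so that the quasi-loop found above is also of the type controlled by \eqref{quasi}. By \eqref{rdef}, $r \le \delta^{12L}$, whence $r^{1/3} \le \delta^{4L}$, while $\theta^L = \delta^{L(1/\beta - \epsilon_{0})}$. Since $\beta > 1$ and $\epsilon_0 > 0$, the exponent $4L$ dominates $L(1/\beta - \epsilon_{0})$, and indeed this is the motivation behind the factor $12L$ in the definition of $r$. Consequently $F_{(3)}^{c} \subseteq \{\text{QL}(\theta^L, \theta; \gamma_n) \neq \emptyset\}$, and applying \eqref{quasi} gives
$$P(F_{(3)}^{c}) \le C_4 \theta^{b} = C_4 \delta^{b(1/\beta - \epsilon_{0})}.$$
Setting $C_8 = C_4$ and $c_9 = b(1/\beta - \epsilon_{0}) > 0$ yields the claim.

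I do not expect a genuine obstacle here: the core implication (a long-distance self-approach of the parametrized path forcing a quasi-loop on scale $(r^{1/3}, \theta)$) is immediate from the definition of $\tau_{l}$, and the size comparison is a routine check. The role of this lemma is less to overcome technical difficulty and more to justify the particular exponents chosen in \eqref{rdef} and \eqref{holder-2-2-1}, which are calibrated precisely so that the quasi-loop estimate can be triggered at the right scale.
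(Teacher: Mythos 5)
Your proof is correct and follows essentially the same route as the paper: deduce from $F_{(3)}^c$ the existence of a quasi-loop at some $x_l$ on the scales $(r^{1/3}, \delta^{1/\beta-\epsilon_0})$, then invoke \eqref{quasi}. The only (cosmetic) difference is the choice of $\theta$ when applying \eqref{quasi}—you take $\theta = \delta^{1/\beta-\epsilon_0}$ to match the outer scale and check $r^{1/3} \le \theta^L$, while the paper takes $\theta = r^{1/(3L)}$ so $\theta^L = r^{1/3}$ and checks $\theta < \delta^{1/\beta-\epsilon_0}$; both choices yield a polynomial-in-$\delta$ bound, the paper's with a somewhat larger exponent $c_9$.
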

\begin{proof}
Since $| x_{l} - x_{l-1}| \ge  \delta^{\frac{1}{\beta} - \epsilon_{0}}$ for each $l \ge 1$, the event $F_{(3)}^{c}$ implies that $$\text{QL} \big(  r^{1/3},  \delta^{\frac{1}{\beta} - \epsilon_{0}}; \gamma_{n} \big) \neq \emptyset,$$ where $\text{QL} ( \cdot, \cdot ; \cdot )$ is defined as in Section \ref{qloops}. 
The claim then holds by applying  \eqref{quasi} with
$\theta = r^{\frac{1}{3L}}$. By the definition of $r$ in \eqref{rdef}, it follows that $r \le \delta^{12 L}$ and 
$$
\theta \le \delta^{4} < \delta^{\frac{1}{\beta} - \epsilon_{0}},
$$
where we used $\beta > 1$ in the last inequality. 
Thus, $\text{QL} \big(  r^{1/3},  \delta^{\frac{1}{\beta} - \epsilon_{0}}; \gamma_{n} \big) \subseteq \text{QL} \big(  \theta^{L}, \theta; \gamma_{n} \big).$
\end{proof}

\subsection{The epsilon-net argument}\label{sec:3.2}
In this subsection, we carry out the epsilon-net argument announced at the beginning of this section. To be more specific, we first use Wilson's algorithm as described in Section \ref{UST} to generate the uniform spanning tree ${\cal U}_{n}$ on the graph $G_{n}$ that contains $\gamma_n$, then consider the LERW paths on this tree started from points in an epsilon-net and finally show that with high probability the beginning parts of these paths are sufficiently regular and these beginning parts can already cover $\gamma_n$.

We start with the setup of epsilon-net.
Recall the choice of $r$ in \eqref{rdef}. We consider a {\bf \textit{net}} $G = \{ y^{j} \}_{j=1}^{M} \subset D_{n}$ in the sense that 
\begin{equation}\label{0726-b-1}
\mbox{for all $x \in D_{n}$ there exists $j = 1,2, \cdots , M$ such that $x \in B (y^{j}, r )$ and $M\asymp r^{-3}$.}
\end{equation}
For each $l=0,1, \cdots , N$,  let $y_{l} := y^{j_{l}} \in G$ be one of the nearest points from $x_{l} := \gamma_{n} (\tau_{l} )$ among $G$ so that $|y^{j_l} - x_{l} | \le r$, where $\tau_{l}$ and $N$ are defined in \eqref{holder-2-2-1} and \eqref{0726-a-1} resp. and $|\cdot|$ stands for the Euclidean norm. Recall the definition of Wilson's algorithm in Section \ref{UST}. As the order of vertices from which LERW's start are arbitrary, we can perform Wilson's algorithm  with a specific order of vertices as follows:
\begin{align}
&\text{(i) \ \  Set ${\cal U}^{0}= \gamma_{n}$;} \notag \\
&\text{(ii) \ For each $l=0,1, \cdots , N$, consider a SRW $R^{l}$ started at $y_{l}$, let $t^{l}\geq 0$ be the first time}\notag\\
&\ \ \ \ \ \text{ that $R^{l}$ hits ${\cal U}^{l} \cup \partial D_{n}$ and set } {\cal U}^{l+1} = {\cal U}^{l} \cup \text{LE} \big( R^{l} [0, t^{l} ] \big);\notag \\ 
&\text{(iii) Perform Wilson's algorithm for all points in $D_{n} \setminus {\cal U}^{N+1}$ in an arbitrary order,} \label{0726-c-1}
\end{align}
and note that the resulting tree $\cal U$ has the distribution of a WUST on $D_n\cup \partial D_n$.
See Figure \ref{fig:wil-alg} below for an illustration.
\begin{figure}[h]
\begin{center}
\includegraphics[scale=0.7]{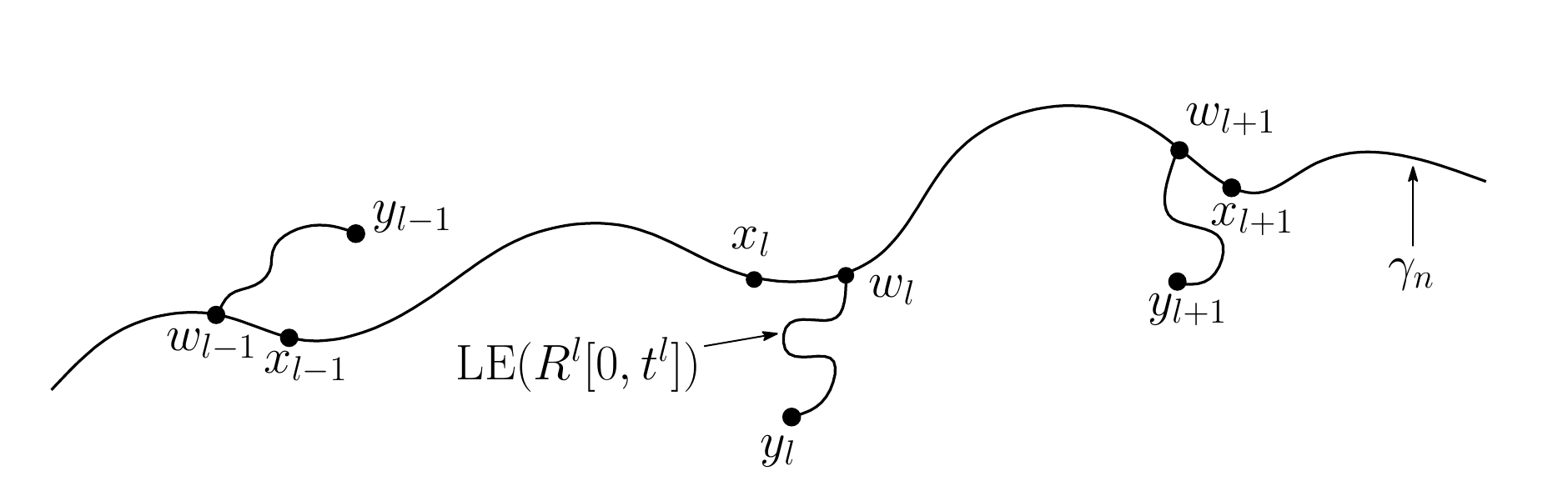}
\caption{Illustration for the step (ii) of Wilson's algorithm}\label{fig:wil-alg}
\end{center}
\end{figure}

In the rest of this subsection, it is convenient to think of $\gamma_{n}$ as a deterministic set and condition $\gamma_{n}$ on the event $F= F_{(1)} \cap F_{(2)} \cap F_{(3)}$. 
We denote the endpoint of $R^{l}$ by $w_{l} := R^{l} (t^{l})$. We define the event $H$ by
\begin{equation}\label{0726-h}
\begin{split}
 H = \Big\{  R^{l} [0, t^{l} ] \subset B (y_{l}, \sqrt{r} )  &\text{ and } w_{l} \in \gamma_{n} [\tau_{l-1}, \tau_{l+1} ]\\
 & \quad\text{ for all } l= 0,1, \cdots , N  \mbox{ s.t. } \text{dist} (y_l,\partial \mathbb{D})\geq 4\sqrt{r} \Big\},
 \end{split}
\end{equation}
with the convention that $\tau_{-1}=0$ and $\tau_{N+1}={\rm len}(\gamma_n)$. The event $H$ controls the diameter of $R^{l}$ and the location of its endpoint $w_{l}$ for every $l$ for which $y_{l}$ is not too close to $\partial \mathbb{D}$. We will now show that on the event $F$, the conditional probability of the event $H$ is close to $1$.

\begin{lem}\label{lem:HF}
There exists $C < \infty$ such that
\begin{equation}\label{holder-3-1}
P(H|F) \geq  1 - C\delta^{2}.
\end{equation}
\end{lem}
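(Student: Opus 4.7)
My plan is to condition on $\gamma_n$ satisfying $F$ and introduce, for each $l = 0, 1, \ldots, N$, the auxiliary event $C_l := \{R^l \text{ hits } \mathcal{U}^l \cup \partial D_n \text{ before exiting } B(y_l, \sqrt{r})\}$. Setting $C := \bigcap_l C_l$, I will show (i) $C$ implies $H$ and (ii) $P(C^c \mid F) \leq \delta^2$, which together give the lemma.

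The geometric input for (i) is that on the event $F$, the balls $\{B(y_l, \sqrt{r})\}_{l=0}^N$ are pairwise disjoint. Indeed, $F_{(3)}$ forces $|x_l - x_j| \geq r^{1/3}$ whenever $|j - l| \geq 2$, while the definition of $\tau_l$ gives $|x_l - x_{l\pm 1}| \geq \delta^{1/\beta - \epsilon_0}$; combined with $|y_l - x_l| \leq r$ and the choice of $r$ in \eqref{rdef} (using $L \geq 2$), both $r^{1/3}$ and $\delta^{1/\beta - \epsilon_0}$ exceed $2\sqrt{r} + 2r$ for $\delta$ small, so disjointness follows. The same choice of $r$ also gives $B(y_l, \sqrt{r}) \subset B(x_l, r^{1/3})$, whence $F_{(3)}$ yields $\gamma_n \cap B(y_l, \sqrt{r}) \subset \gamma_n[\tau_{l-1}, \tau_{l+1}]$.

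To verify (i), fix a good index $l$, i.e.\ one with ${\rm dist}(y_l, \partial \mathbb{D}) \geq 4\sqrt{r}$, so that $B(y_l, \sqrt{r})$ lies well inside $\mathbb{D}$ and $R^l$ cannot reach $\partial D_n$ inside it. On $C_l$, this forces $R^l[0, t^l] \subset B(y_l, \sqrt{r})$. Combining $C_0, \ldots, C_{l-1}$ with the disjointness above, each prior loop-erasure $\LE(R^j[0, t^j])$ contributes no vertex inside $B(y_l, \sqrt{r})$, whence $\mathcal{U}^l \cap B(y_l, \sqrt{r}) = \gamma_n \cap B(y_l, \sqrt{r}) \subset \gamma_n[\tau_{l-1}, \tau_{l+1}]$. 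Hence $w_l = R^l(t^l) \in \gamma_n[\tau_{l-1}, \tau_{l+1}]$, which is exactly the condition at index $l$ in the definition of $H$.

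For (ii), $C_l^c$ is contained in $\{R[0, T^R_{y_l, \sqrt{r}}] \cap \gamma_n = \emptyset\}$ because $\mathcal{U}^l \supseteq \gamma_n$. Since ${\rm dist}(y_l, \gamma_n) \leq |y_l - x_l| \leq r$, $F_{(2)}$ applied at $x = y_l$ gives $P(C_l^c \mid \gamma_n) \leq \delta^5$ on $F$. Union-bounding over $l \leq N$ and using $N \leq \delta^{-3}$ from $F_{(1)}$ yields $P(C^c \mid F) \leq N \delta^5 \leq \delta^2$. I expect the main technical obstacle to be the geometric exponent-bookkeeping underlying disjointness---verifying the ordering $\sqrt{r} + r \ll r^{1/3}, \delta^{1/\beta - \epsilon_0}$---which once established via \eqref{rdef} and $L \geq 2$ makes the rest a direct application of $F_{(2)}$ combined with $F_{(1)}$.
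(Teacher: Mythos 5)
Your proof is correct and follows the same overall strategy as the paper: use $F_{(2)}$ to bound the probability that each $R^l$ escapes its ball without hitting $\gamma_n$, union bound over $l \le N$ via $F_{(1)}$, and then use $F_{(3)}$ together with the choice of $r$ to pin down the endpoint $w_l$. The difference is that you carry out, in full, a step the paper's write-up leaves implicit. The paper's argument only constrains the walks $R^l$ for indices $l$ with $\mathrm{dist}(y_l,\partial\mathbb{D}) \ge 4\sqrt r$, and then asserts that $w_l \in \gamma_n[\tau_{l-1},\tau_{l+1}]$ by a contradiction argument whose contradiction branch only treats the case $w_l \in \gamma_n \setminus \gamma_n[\tau_{l-1},\tau_{l+1}]$; the possibility that $w_l$ lies on a previously generated branch $\text{LE}(R^j[0,t^j])$ (in particular one coming from a boundary-adjacent index $j<l$, which $H'$ does not control) is dismissed with the phrase ``in other words, $R^l$ must not hit branches generated earlier'' but not proved. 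Your version closes this by (a) controlling \emph{all} indices $l=0,\dots,N$ through the events $C_l$ --- legitimate since $F_{(2)}$ applies to every $x \in D_n$ with $\mathrm{dist}(x,\gamma_n)\le r$, with no boundary-distance restriction --- and (b) explicitly establishing the pairwise disjointness of the balls $B(y_l,\sqrt r)$ from $F_{(3)}$, the spacing $|x_l-x_{l\pm1}|\ge \delta^{1/\beta-\epsilon_0}$, and the scaling $r\le \delta^{12L}$ with $L\ge2$. This makes $\mathcal{U}^l \cap B(y_l,\sqrt r) = \gamma_n \cap B(y_l,\sqrt r)$ and hence $w_l \in \gamma_n$ an actual consequence rather than an assertion. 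Same estimate, same events, but your exposition is the more complete one.
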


\begin{proof}
The event $F_{(2)}$ ensures that for all $l$ such that $\text{dist} (y_l,\partial \mathbb{D})\geq 4\sqrt{r}$,
\begin{equation*}
P^{y_{l}}_{R^{l}} \Big(  R^{l} [0, t^{l} ] \not\subset B (y_{l}, \sqrt{r} ) \ \Big| \ F \Big)  \le P^{y_{l}}_{R^{l}} \Big(  R^{l} \big[ 0, T^{R^{l}}_{y_{l}, \sqrt{r}} \big] \cap \gamma_{n} = \emptyset \ \Big| \ F \Big) \le   \delta^{5}.
\end{equation*}
Since $N \le \delta^{-3}$ by the event $F_{(1)}$, we see that 
$$
P\Big(\mbox{$R^{l} [0, t^{l} ] \subset B (y_{l}, \sqrt{r} ) $ for all $l= 0,1, \cdots , N$ s.t.\ }\text{dist} (y_l,\partial \mathbb{D})\geq 4\sqrt{r} \ \Big|  \ F\Big)\geq 1 - C \delta^{2 }.
$$
We now note that on the event 
$$
H':=\big\{\mbox{$R^{l} [0, t^{l} ] \subset B (y_{l}, \sqrt{r} ) $ for all $l= 0,1, \cdots , N$ s.t.\ } \text{dist} (y_l,\partial \mathbb{D})\geq 4\sqrt{r}\big\},
$$
by the definition of the event $F_{(3)}$, $| x_{l} - x_{l-1} | \ge \delta^{\frac{1}{\beta} - \epsilon_{0} }$ and $|x_{l} - y_{l} | \le r$, 
it holds necessarily that 
$w_{l} = R^{l} (t^{l} ) \in \gamma_{n} [\tau_{l-1}, \tau_{l+1} ] \subset \gamma_n$, in other words, $R^{l}$ must not hit branches $\text{LE} \big( R^{i} [0, t^{i} ] \big) $ ($i=0,1, \cdots , l-1$) generated earlier in \eqref{0726-c-1} (ii). 

Now assume that on the event $H'\cap F$, $w_{l} \notin \gamma_{n} [\tau_{l-1}, \tau_{l+1} ]$. This implies that $w_{l} \in  \gamma_{n} [0, \tau_{l-1} ] \cup \gamma_{n} [\tau_{l +1} , {\rm len}  (\gamma_{n} ) ]  $. However, since $| w_{l} - y_{l} | \le \sqrt{r}$ and $|x_{l} - y_{l} | \le r$, our choice of $r$ (see \eqref{rdef}) ensures that $w_{l} \in B \big( x_{l}, r^{\frac{1}{3}} \big)$. This contradicts the event $F_{(3)}$. 
This finishes the proof of 
\eqref{holder-3-1}.
\end{proof}

We also need to control the length of  $\text{LE} (R^{l} [0, t^{l} ] )$. Write $L_{l} = {\rm len} \Big( \text{LE} (R^{l} [0, t^{l} ] ) \Big) $ and 
define the event $I$ as
\begin{equation}\label{0726-i}
I: = \Big\{ L_{l} \le r^{\frac{1}{3}} 2^{\beta n} \text{ for all } l = 0, 1, \cdots , N  \mbox{ s.t. } \text{dist} (y_l,\partial \mathbb{D})\geq 4\sqrt{r} \Big\}.
\end{equation}
Note that $R^{l} [0, t^{l} ]$ is with high probability contained in $B (y_{l}, \sqrt{r} )$ on  the event $F\cap H$. Thus, in this case the length $L_{l}$ must be typically smaller than $r^{\frac{\beta}{2}} 2^{\beta n}$. Since $\beta \in (1, 5/3]$, the event $I$ must occur with high probability. More precisely, we have the following lemma.
\begin{lem}\label{lem:FHIc}
There exist $c > 0$ and $C < \infty$ such that
\begin{equation}\label{eq:FHIc}
  P ( H \cap I^{c} | F ) \le C r^{-3} \exp \big\{ -c r^{-\frac{1}{12}} \big\}.
\end{equation}
\end{lem}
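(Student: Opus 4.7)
My strategy is a union bound over the indices $l = 0, 1, \ldots, N$ combined with a per-index exponential bound. On the event $F$ we have $N \leq \delta^{-3}$, and since $\max\{5/\xi, 12L\} \geq 1$ in the definition \eqref{rdef} of $r$, also $\delta^{-3} \leq r^{-3}$, yielding the polynomial prefactor $r^{-3}$. It therefore suffices to establish a per-index bound of the form
\begin{equation*}
    P\bigl(R^l[0, t^l] \subset B(y_l, \sqrt r),\; L_l > r^{1/3}\,2^{\beta n}\bigm|\gamma_n\bigr) \leq C\exp\{-c\,r^{-1/12}\},
\end{equation*}
uniform in $l$ with $\mathrm{dist}(y_l, \partial\mathbb{D}) \geq 4\sqrt r$ and in $\gamma_n \in F$.

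For the per-index bound, fix such an $l$ and let $\sigma_l$ denote the exit time of $R^l$ from $B(y_l, \sqrt r)$, so that under the above event $t^l < \sigma_l$ and hence $\LE(R^l[0, t^l])$ lies entirely inside $B(y_l, \sqrt r)$. The typical length of any LERW confined to a ball of radius $\sqrt r$ at mesh $2^{-n}$ is of order $r^{\beta/2}\,2^{\beta n}$; to see this one translates $y_l$ to the origin and rescales by $1/\sqrt r$, which places us on the lattice $2^{-n^\ast}\mathbb{Z}^3$ with $2^{-n^\ast} = 2^{-n}/\sqrt r$. At this scale \eqref{growth'} and \eqref{exp-tail'} give, for the length $M'$ of the rescaled LERW up to first exit from the unit ball, $E[M'] \asymp r^{\beta/2}\,2^{\beta n}$ together with the tail estimate $P(M' > \kappa \cdot r^{\beta/2}\,2^{\beta n}) \leq c_3\exp\{-c_4\kappa^{1/2}\}$ for every $\kappa \geq 1$. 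The target $r^{1/3}\,2^{\beta n}$ exceeds the typical size by a factor $\kappa = r^{1/3-\beta/2}$, which is at least $r^{-1/6}$ by $\beta \leq 5/3$, so that $\kappa^{1/2} \geq r^{-1/12}$ and the tail bound $C\exp\{-c\,r^{-1/12}\}$ on $M'$ itself falls out immediately.

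The main obstacle is to rigorously transfer this tail bound from the LERW length to first exit of the ball back to $L_l$: the walk $R^l$ is stopped at hitting $\mathcal{U}^l$, not at exiting the ball, and loop-erasure is \emph{not} monotone under extending the underlying walk (the continuation $R^l[t^l, \sigma_l]$ can revisit $\LE(R^l[0, t^l])$ and erase parts of it), so the naive comparison $L_l \leq \mathrm{len}(\LE(R^l[0, \sigma_l]))$ fails in general. To bypass this, I plan to bound the moments of $L_l$ directly using the multi-point estimate for LERW (Proposition~\ref{prop:npoint1} and its variant for conditioned walks Proposition~\ref{prop:npoint2}, the latter being the right tool since, up to $t^l$, the walk $R^l$ is effectively conditioned on non-intersection with $\mathcal{U}^l$). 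An iterated sum of Green's function / escape-probability factors inside $B(y_l, \sqrt r)$ yields a bound of the form $E[L_l^k \mathbf{1}_{R^l[0,t^l]\subset B(y_l,\sqrt r)}]^{1/k} \leq C k\, r^{\beta/2}\,2^{\beta n}$ uniformly in $\gamma_n \in F$, after which Markov's inequality optimized in $k$ delivers the claimed per-index exponential bound. Summing over $l$ then gives \eqref{eq:FHIc}.
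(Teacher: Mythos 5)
You have the right high-level structure (union bound over the net with prefactor $r^{-3}$, per-index exponential tail via rescaling, the arithmetic $( \beta/2-1/3)/2 > 1/12$), and you correctly flagged the genuine danger: loop erasure is not monotone under path extension, so one cannot simply compare $L_l$ with $\mathrm{len}(\LE(R^l[0,\sigma_l]))$.

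But the workaround you propose is the wrong tool and misses the clean resolution the paper uses. The walk $R^l$ is \emph{stopped} at its first hit of $\mathcal U^l$, not \emph{conditioned} to avoid it; Proposition \ref{prop:npoint2} is stated for loop erasures of walks conditioned to avoid a fixed set $K$ inside a cube, with the evaluation points confined to a distant cube $A_n(x)$ — a geometry that does not match $B(y_l,\sqrt r)$ adjacent to $\mathcal U^l$. Trying to push a $k$th-moment estimate of $L_l$ through that proposition, uniformly over $\gamma_n \in F$, is both technically awkward and unnecessary. The key observation you are missing is a structural consequence of Wilson's algorithm: once $R^l$ is run and stopped at $\mathcal U^l$, the path $\LE(R^l[0,t^l])$ is exactly the initial segment of the branch $\gamma_{y_l,\partial}$ from $y_l$ to $\partial D_n$ in the final tree $\mathcal U_n$, and — because the distribution of $\mathcal U_n$ does not depend on the ordering in Wilson's algorithm — $\gamma_{y_l,\partial}$ has the law of an unconditioned LERW from $y_l$ to $\partial D_n$. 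On $H$, $\LE(R^l[0,t^l]) \subset B(y_l,\sqrt r)$, so $L_l \le U_{y_l}$ where $U_{y_l}$ is the first exit time of $\gamma_{y_l,\partial}$ from $B(y_l,\sqrt r)$; this removes the monotonicity problem at a stroke, because you now bound the exit time of a bona fide LERW and can apply \eqref{exp-tail'} (after the rescaling you describe) directly. This is precisely the paper's proof. The union bound over the $M \asymp r^{-3}$ net points (rather than over $l \le N$ as you proposed, which on its own leaves a conditioning on $F$ inside each term) then completes the argument.
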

\begin{proof}
Suppose that the event $H \cap I^{c}$ occurs. Then we have $L_{l} \ge   r^{\frac{1}{3}} 2^{\beta n}$ for some $l$. The event $H$ guarantees that $  R^{l} [0, t^{l} ]$ is contained in $B (y_{l}, \sqrt{r} )$. With this in mind, we write $\gamma_{x, \partial }$ for the unique simple path in ${\cal U}_{n}$ starting from $x$ to $\partial D_{n}$. Then, setting \begin{equation}\label{Udef}
U_{x}:=T^{\gamma_{x,\partial}}_{x,\sqrt{r}}
\end{equation}
as the first time that $\gamma_{x, \partial }$ exits from $B (x, \sqrt{r} )$, the event $H \cap I^{c}$ ensures that $U_{y_{l}} \ge r^{\frac{1}{3}} 2^{\beta n}$ for some $l$. But, for each $x \in D_{n}$, the law of $\gamma_{x, \partial }$ coincides with that of $\text{LE} \big( R^{x} [0, t_{x} ] \big)$ where $R^{x}$ denotes a SRW started at $x$ and $t_{x}$ stands for the first time that $R^{x}$ hits $\partial D_{n}$. Combining this fact with  \eqref{exp-tail'}, we have
\begin{equation}
\label{holder-c-1}
\begin{split}
&  P ( H \cap I^{c} | F )  \\
\le \;&P \Big( \text{There exists } j \in \{ 1, 2, \cdots , M \} \text{ such that }  \text{dist} (y^{j},\partial \mathbb{D})\geq 4\sqrt{r} \mbox{ and } U_{y^{j}} \ge r^{\frac{1}{3}} 2^{\beta n} \Big)  \\
\le\;& \sum_{j=1,\; \text{dist} (y^j,\partial \mathbb{D})\geq 4\sqrt{r} }^{M} P \big( U_{y^{j}} \ge r^{\frac{1}{3}} 2^{\beta n} \big) \le C r^{-3} \exp \big\{ -c r^{-\frac{1}{12}} \big\},
\end{split}
\end{equation}
where we recall that the epsilon net consists of   
$M \asymp r^{-3}$ points, the exponent $1/12$ comes from the fact that $E[U_{y^j}] \asymp r^{\beta/2}2^{\beta n}$ if $\text{dist} (y^j,\partial \mathbb{D})\geq 4\sqrt{r}$, and the observation that $(\beta/2-1/3)/2>1/12$ in applying \eqref{exp-tail'}. We thus finish the proof.
\end{proof}

\subsection{Lower bound on the modulus of continuity}
With all preparations complete, we can now give the lower bound of the modulus of continuity.

\begin{prop}\label{HC-upper}

Let
\begin{equation}\label{0630-w}
K_n := \bigg\{ \exists x' = \gamma_{n} (s) \text{ and }   y' = \gamma_{n} (t)  \text{ s.t.\ } 0 < t-s \le 2  \delta 2^{\beta n} \text{ and } |x'-y'| \ge \frac{\delta^{\frac{1}{\beta} - \epsilon }}{2} \bigg\}.
\end{equation}
Then, there exist universal constants $c, C \in (0, \infty )$ such that for all $\delta \in (0,1/10)$, $\epsilon \in (0, \frac{1}{10})$ with $\delta^{-\frac{\epsilon}{2}} > 100$ 
\begin{equation}\label{J}
 P (K_n^c)  \ge 1 - C \delta^{c \epsilon}.
 \end{equation}
\end{prop}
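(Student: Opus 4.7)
The plan is to work on the high-probability event $F \cap H \cap I$ from Proposition \ref{lem:Festimate}, Lemma \ref{lem:HF}, and Lemma \ref{lem:FHIc} (which together occur with probability at least $1 - C\delta^{c\epsilon}$), and to argue contrapositively: if $K_n$ occurred, then one could find discretized indices $a < b$ along $\gamma_n$ such that $\gamma_n$ traverses a spatial distance of order $\delta^{1/\beta - \epsilon}$ in time at most of order $\delta \cdot 2^{\beta n}$, contradicting the Wilson-algorithm length estimate \eqref{exp-tail'} when applied at the appropriate spatial scale.

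First I would reduce from the continuous event $K_n$ to a statement about the discrete indices. Given witnesses $s < t$ for $K_n$, set $a := \min\{l \ge 0 : \tau_l \ge s\}$ and $b := \max\{l : \tau_l \le t\}$. Using that $\gamma_n[\tau_{l-1}, \tau_l]$ is contained in $B(x_{l-1}, \delta^{1/\beta - \epsilon_0})$ by definition of $\tau_l$, the triangle inequality yields $|x_a - x_b| \ge \delta^{1/\beta - \epsilon}/4 =: D$ and $\tau_b - \tau_a \le t - s \le 2\delta 2^{\beta n}$. Since $D \gg \delta^{1/\beta - \epsilon_0}$ (as $\epsilon_0 = \epsilon/100$), this forces $a < b$.

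The core estimate is the lower bound on the length of $\gamma_n$ between these two spatially separated points. On $F \cap H$, the tree path $\gamma_{y_a,\partial}$ from the net point $y_a$ to $\partial D_n$ decomposes as $\mathrm{LE}(R^a[0,t^a]) \oplus \gamma_n[u_a,\mathrm{len}(\gamma_n)]$, where $w_a = \gamma_n(u_a) \in \gamma_n[\tau_{a-1},\tau_{a+1}]$; the initial portion is confined to $B(y_a,\sqrt{r})$ and has length $L_a \le r^{1/3} 2^{\beta n}$ on $I$. Applying the scaled version of the two-sided tail estimate \eqref{exp-tail'} at spatial scale $D/2$, the exit time $U_{y_a,D/2}$ of this tree path from $B(y_a,D/2)$ is at least $c D^\beta 2^{\beta n} \asymp \delta^{1-\beta\epsilon} 2^{\beta n}$ with stretched-exponentially high probability. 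Since the choice of $r$ in \eqref{rdef} guarantees $\sqrt{r} \ll D$ and $r^{1/3} \ll D^\beta$ (using $L\ge 2$ and $\beta>1$), essentially all of these $\ge cD^\beta 2^{\beta n}$ steps come from the $\gamma_n$-portion of the tree path. Since $x_b \notin B(y_a,D/2)$, letting $V_a$ be the first exit time of $\gamma_n$ from $B(y_a,D/2)$ after $u_a$, we have $V_a \le \tau_b$ and hence $\tau_b - \tau_{a-1} \ge V_a - u_a \ge cD^\beta 2^{\beta n}/2$.

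The main obstacle is upgrading this to a bound on $\tau_b - \tau_a$ rather than $\tau_b - \tau_{a-1}$. For this I would apply the upper half of \eqref{exp-tail'} at spatial scale $2\delta^{1/\beta - \epsilon_0}$ to the tree path from each net point $y_{l-1}$, which together with $F_{(3)}$ and the Wilson decomposition yields $\tau_l - \tau_{l-1} \le C\delta^{1-\beta\epsilon_0}\, 2^{\beta n}$ for all $l \le N$ via a union bound (losing only a polynomial factor $N \le \delta^{-3}$ against the stretched-exponential tail). Since $\epsilon_0 = \epsilon/100$, the difference $cD^\beta 2^{\beta n}/2 - C\delta^{1-\beta\epsilon_0}\, 2^{\beta n}$ still dominates $2\delta\, 2^{\beta n}$ for $\beta\epsilon$ sufficiently small, producing the sought contradiction. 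A further union bound over the $O(\delta^{-6})$ admissible index pairs and the $O(\delta^{-36L})$ net points loses only polynomial factors in $\delta$, absorbed again by the stretched-exponential tails, yielding $P(K_n) \le C\delta^{c\epsilon}$.
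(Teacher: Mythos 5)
Your overall strategy is the same as the paper's (work on $F\cap H\cap I$, use the Wilson decomposition $\gamma_{y,\partial}=\mathrm{LE}(R[0,t])\oplus\gamma_n[u,\cdot]$, and invoke \eqref{exp-tail'}), but there are two genuine gaps.

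\textbf{Gap 1: the tail estimate is applied at the wrong scale.} You claim that the exit time $U_{y_a,D/2}$ of the tree path from $B(y_a,D/2)$ is at least $cD^\beta 2^{\beta n}$ with stretched-exponentially high probability. But $E[U_{y_a,D/2}]\asymp D^\beta 2^{\beta n}$, so $cD^\beta 2^{\beta n}$ is a \emph{constant} fraction of the mean; the lower half of \eqref{exp-tail'} at that scale gives only a constant probability, not a stretched-exponential one, and the subsequent union bound over $\asymp r^{-3}$ net points then fails. The same issue kills the upgrade step: bounding $\tau_l-\tau_{l-1}$ above by $C\delta^{1-\beta\epsilon_0}2^{\beta n}$ is again a comparison at a constant multiple of the mean, so the per-$l$ probability is a constant, and the union bound over $N\le\delta^{-3}$ indices diverges. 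The fix is the paper's: compare the exit time from a ball of radius $\delta^{1/\beta-\epsilon/10}/4$ against the much smaller threshold $3\delta 2^{\beta n}$ (ratio $\asymp\delta^{\beta\epsilon/10}$, which is small), which gives a genuine stretched-exponential bound. Relatedly, your extra ``upgrade from $\tau_b-\tau_{a-1}$ to $\tau_b-\tau_a$'' is only needed because you work with $y_a$; if you instead use $y_{a+1}$ (so that $w_{a+1}\in\gamma_n[\tau_a,\tau_{a+2}]$ sits at a time in $[s,t]$, since $\tau_a\ge s$ and $\tau_{a+2}\le\tau_{b-1}\le t$), then $d_{{\cal U}_n}(w_{a+1},y')\le t-s$ directly, and the exit time of the tree path from $y_{a+1}$ is at most $L_{a+1}+(t-s)$ with no correction needed.

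\textbf{Gap 2: the boundary case is omitted, and it is what actually produces the $\delta^{c\epsilon}$ rate.} The estimate \eqref{exp-tail'}, with $E[U]\asymp D^\beta 2^{\beta n}$, requires the ball in question to sit well inside $D_n$; if $y_a$ (or $y_{a+1}$) is within distance $\asymp\delta^{1/\beta-\epsilon/10}$ of $\partial\mathbb{D}$, the tree path may hit $\partial D_n$ first and the lower-tail bound simply does not apply. The paper handles this with a separate argument: if $y_{i+1}$ is near the boundary, then $x_{i-1}$ is too, and then the underlying SRW must have diameter $\ge\delta^{1/\beta-\epsilon}/2$ after leaving $B(1-2\delta')$, which by an iterated exit-probability estimate has probability $\le C\delta^{c\epsilon}$. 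This is the dominant contribution to the final bound. You observe that union bounds ``lose only polynomial factors absorbed by the stretched-exponential tails'' and conclude $P(K_n)\le C\delta^{c\epsilon}$ --- but if all your bounds really were stretched-exponential the conclusion would be far stronger than $\delta^{c\epsilon}$, which is a signal that the polynomial-in-$\delta$ contribution (here, the boundary case) is missing from the proposal.
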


\begin{proof}

We want to show that the event $K_n$ is unlikely to happen. For that purpose, suppose that the event $F \cap H \cap I \cap K_n$ occurs. 

Recall the notion of $x'$, $y'$, $s$ and $t$ in the definition of the event $K_n$ in \eqref{0630-w}. Find $i < j$ satisfying that 
\begin{equation}\label{ij}
\text{$\tau_{i-1} \le s < \tau_{i}$ and $\tau_{j-1 } \le t < \tau_{j}$}.
\end{equation}
 Since the diameter of $\gamma_{n} [\tau_{l-1}, \tau_{l} ]$ is bounded above by $\delta^{\frac{1}{\beta} - \epsilon_{0}}$ for each $l$ and $|x'- y'| \ge \frac{\delta^{\frac{1}{\beta} - \epsilon}}{2}$, it holds that $j -i \ge \delta^{- \epsilon /2 } > 100$, where we recall that $\epsilon_{0} = 10^{-2} \epsilon$ as defined in \eqref{holder-0}. The graph distance in ${\cal U}_{n}$ is denoted by $d_{{\cal U}_{n}} ( \cdot , \cdot )$. 

We now distinguish between two cases, depending on whether $y_{i+1}$ is close to the boundary of $\mathbb{D}$.

Suppose first that $\text{dist} ( y_{i+1},\partial\mathbb{D})>\delta':= \delta^{\frac{1}{\beta} - \epsilon/10} \ge 4 \sqrt{r}$. Recall that $$L_{l} = {\rm len} \Big( \text{LE} (R^{l} [0, t^{l} ] ) \Big) $$ as defined above \eqref{0726-i}. Since the event $H \cap I \cap K_{n}$ occurs, it holds that  
\begin{equation*}
\begin{split}
d_{{\cal U}_{n}} (y_{i + 1}, y' ) &\;= d_{{\cal U}_{n}} (y_{i + 1}, w_{i+1}) + d_{{\cal U}_{n}} (w_{i + 1}, y' ) \\&\;= L_{i+1} + d_{{\cal U}_{n}} (w_{i + 1}, y' )\leq (2 \delta + r^{\frac{1}{3}} ) 2^{\beta n} \le  3 \delta 2^{\beta n},
\end{split}
\end{equation*}
where in the second to last inequality we observe that by definition of $w_{i+1}$, $s<\tau_i\leq w_{i+1}$, and hence by the definition of $K_n$, $d_{{\cal U}_{n}} (w_{i + 1}, y' )\leq d_{{\cal U}_{n}} (x', y' )=t-s$. Moreover,
the event $H$ ensures that $|y_{i+1} - y'| \ge |x'- y'| - |x'- x_{i}| - |x_{i} - x_{i+1}| - |x_{i+1} - w_{i+1} | - |w_{i+1}- y_{i+1}| \ge  \frac{\delta^{\frac{1}{\beta} - \epsilon }}{2} - 3 \delta^{\frac{1}{\beta} - \epsilon_{0}} - \sqrt{r} \ge   \delta'/4$. Consequently, we see that the first time that $\gamma_{y_{i+1}, \partial }$ exits from $B \big( y_{i+1}, \delta'/4 \big)$ (we denote this first exit time by $U'_{y_{i+1}}$)  is smaller than $3 \delta 2^{\beta n}$. Here we recall that for each $x \in D_{n}$, $\gamma_{x, \partial }$ stands for the unique simple path in ${\cal U}_{n}$ starting from $x$ to $\partial D_{n}$. The law of $\gamma_{x, \partial }$ coincides with that of $\text{LE} \big( R^{x} [0, t_{x} ] \big)$ where $R^{x}$ denotes the SRW started at $x$ and $t_{x}$ stands for the first time that $R^{x}$ hits $\partial D_{n}$. In this case we can  use \eqref{exp-tail'} again as in \eqref{holder-c-1} to show that for any $y^l$ such that $\text{dist} (y^l,\partial \mathbb{D})>\delta'$ there exist $c,C \in (0, \infty)$ such that
\begin{equation}\label{eq:deepinside}
    P \big( U'_{y^{l}} \le 3 \delta 2^{\beta n} \big)\leq C \exp \Big\{ - c \delta^{- c \epsilon} \Big\}.
\end{equation}

We will next deal with the the remaining case that $\text{dist} ( y_{i+1},\partial\mathbb{D} )  \le \delta':= \delta^{\frac{1}{\beta} - \epsilon/10}$. Note that 
\begin{equation*}
|x_{i-1} - y_{i+1} | \le |x_{i-1} - x_{i} | + |x_{i} - x_{i+1} | + |x_{i+1} - y_{i+1} | \le 2 \delta^{\frac{1}{\beta} - \frac{\epsilon}{100}} + r \le 3  \delta^{\frac{1}{\beta} - \frac{\epsilon}{100}},
\end{equation*}
and thus $\text{dist} (x_{i-1}, \partial \mathbb{D} ) \le \delta^{\frac{1}{\beta} - \epsilon/10} + 3  \delta^{\frac{1}{\beta} - \frac{\epsilon}{100}} \le 2 \delta^{\frac{1}{\beta} - \epsilon/10}$. Since $\tau_{i-1} \le s \le t $, $x_{i-1} = \gamma_{n} (\tau_{i-1} )$ and $| \gamma_{n} (s) - \gamma_{n} (t) | \ge \frac{\delta^{\frac{1}{\beta} - \epsilon }}{2} $ in view of the event $K_{n}$, this implies that the diameter of $$S^{(n)} \Big[ T_{1 - 2 \delta^{\frac{1}{\beta} - \epsilon/10}} \, ,  T_{1} \Big] $$  is bounded below by $2^{-1} \cdot \delta^{\frac{1}{\beta} - \epsilon } $, where we recall that $T_{r}$ stands for the first time that SRW $S^{(n)}$ exits from $B^{(n)} (r)$. This probability is  bounded above by $ C \delta^{c \epsilon} $ for some universal constants $0 < c , C < \infty$. To see this, set $\varpi = 2 \delta^{\frac{1}{\beta} - \epsilon/10}$, 
\begin{align*}
\varsigma_{0} = T_{1 - \varpi } \ \  \text{ and } \ \  \varsigma_{i} = \inf \Big\{ j \ge \varsigma_{0}  \ \Big| \ \big| S^{(n)} (j) - S^{(n)} ( \varsigma_{0} ) \big| \ge  2^{i} \varpi  \Big\} \ \ \text{ for } i \ge 1.
\end{align*}
Let ${\cal N} $ be the largest integer such that $2^{{\cal N}} \varpi \le 2^{-2} \cdot \delta^{\frac{1}{\beta} - \epsilon }$. Note that $2^{{\cal N}} \asymp \delta^{-\frac{9}{10} \epsilon} $. The strong Markov property ensures that there exists some universal constant $c_{1} > 0$ such that for $1 \le i \le {\cal N}$ and each path $\lambda$ with $ P \big( S^{(n)} [ 0, \varsigma_{i-1}]  = \lambda \big) > 0$
$$P \Big( S^{(n)} [ \varsigma_{i-1}, \varsigma_{i}] \cap B^{(n)} (1)^{c} \neq \emptyset \ \Big| \  S^{(n)} [ 0, \varsigma_{i-1}]  = \lambda \Big) \ge c_{1}.$$
Iterating this and writing $1-c_{1} = 2^{-c_{2}}$ with $c_{2} > 0$, we can conclude that the probability that the diameter of $S^{(n)} [ \varsigma_{0}, T_{1} ]$ is bounded below by $2^{-1} \cdot \delta^{\frac{1}{\beta} - \epsilon } $ is smaller than $(1-c_{1})^{{\cal N}} = 2^{- c_{2} {\cal N}} \le  C \delta^{ \frac{9}{10} c_{2} \epsilon } $, as desired.

Combining these two cases, we have 
\begin{align*}
& P ( K_{n} ) \le P \big ( F \cap H \cap I \cap K_{n} \big)  + C \delta^{c}  \\
\le\;& P \Big( F \cap H \cap I \cap K_{n} \cap \big\{ \text{dist} ( y_{i+1},\partial\mathbb{D})>\delta' \big\} \Big) \\
&\quad+ P \Big( F \cap H \cap I \cap K_{n} \cap \big\{ \text{dist} ( y_{i+1},\partial\mathbb{D}) \le \delta' \big\} \Big) + C \delta^{c} \\
\le\;& P \Big( \text{There exists $l = 1, 2, \cdots , M$ such that $\text{dist} (y^l,\partial \mathbb{D})>\delta'$ and  $ U'_{y^{l}} \le 3 \delta 2^{\beta n}  $} \Big) \\
\phantom{+}\;&\quad + P \Big( \text{The diameter of $S^{(n)} \Big[ T_{1 - 2 \delta^{\frac{1}{\beta} - \epsilon/10}} \, ,  T_{1} \Big] $ is bigger than $\frac{\delta^{\frac{1}{\beta} - \epsilon }}{2}$} \Big) + C \delta^{c} \\
\le\;&  \sum_{l=1}^{M} C \exp \Big\{ - c \delta^{- c \epsilon} \Big\} + C \delta^{c \epsilon}  +  C \delta^{c} \le C M \exp \Big\{ - c \delta^{- c \epsilon} \Big\} + C \delta^{c \epsilon} \\
 \le\;& C r^{-3} \exp \Big\{ - c \delta^{- c \epsilon} \Big\} + C \delta^{c \epsilon} \le C \delta^{ c \epsilon},
\end{align*}
where we used 
\begin{itemize}
\item Proposition \ref{lem:Festimate}, Lemma \ref{lem:HF} and Lemma \ref{lem:FHIc} in the first inequality to estimate $P \big( (F \cap H \cap I )^{c} \big)$;

\item $M \asymp r^{-3}$ in the sixth inequality;

\item the fact that  $r^{-3}$ is polynomial in $\delta^{-1}$ in the last inequality.

\end{itemize}
So, we finish the proof.
\end{proof}

Finally in the next proposition we show that $\eta$ (recall its definition in Section \ref{SCALING}), the scaling limit of $\{ \eta_{n} \}_{n \ge 1}$, is $h$-H\"{o}lder continuous for any $h < 1/\beta$, which constitutes the first part of Theorem \ref{main}.
\begin{prop}\label{0626-a-1}
The random curve $\eta$ is almost surely $h$-H\"{o}lder continuous for any $h < 1/\beta$. Namely, with probability one we have for any $h < 1/\beta$
\begin{equation}\label{0626-d-1}
\sup_{0 \le s < t \le t_{\eta} } \frac{|\eta (s) - \eta (t) |}{|s-t|^{h}} < \infty.
\end{equation}
Here $t_{\eta}$ stands for the time duration of $\eta$. 
\end{prop}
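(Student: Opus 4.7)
The plan is to interpret Proposition \ref{HC-upper} as a uniform-in-$n$ tail bound on the modulus of continuity of $\eta_n$, transfer that bound to $\eta$ via the weak convergence $\eta_n \Rightarrow \eta$, and then conclude H\"older continuity by a Borel--Cantelli argument along a dyadic sequence of scales.

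First I would recast Proposition \ref{HC-upper} at the level of $\eta_n$. For a curve $\lambda \in \mathcal{C}(\overline{\mathbb{D}})$, define its modulus of continuity
\begin{equation*}
\omega_\lambda(\delta) := \sup\bigl\{|\lambda(s) - \lambda(t)| : 0 \le s < t \le t_\lambda, \, t - s \le 2\delta\bigr\}.
\end{equation*}
Since $\eta_n(u) = \gamma_n(2^{\beta n} u)$ for $u \in [0, 2^{-\beta n} M_n]$, the event $K_n$ in \eqref{0630-w} coincides with $\{\omega_{\eta_n}(\delta) \geq \delta^{1/\beta - \epsilon}/2\}$, so Proposition \ref{HC-upper} reads $P(\omega_{\eta_n}(\delta) \geq \delta^{1/\beta - \epsilon}/2) \leq C \delta^{c \epsilon}$, uniformly in $n$.

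Next I would transfer this estimate to $\eta$ using the weak convergence $\eta_n \Rightarrow \eta$ in $(\mathcal{C}(\overline{\mathbb{D}}), \rho)$ established in \cite{LS}. Via Skorokhod's representation theorem one may couple everything on a common probability space on which $\eta_n \to \eta$ almost surely in $\rho$. The functional $\omega(\cdot, \delta)$ is \emph{essentially} lower semicontinuous: if $\lambda_n \to \lambda$ in $\rho$ and $\delta' > \delta$, then $\omega_\lambda(\delta) \leq \liminf_n \omega_{\lambda_n}(\delta')$. This follows by time-rescaling both $\lambda_n$ and $\lambda$ to the unit interval: any pair $s < t$ in $[0, t_\lambda]$ with $t - s \leq 2\delta$ is realized as a limit of pairs $s_n < t_n$ in $[0, t_{\lambda_n}]$ satisfying $t_n - s_n \leq 2\delta'$ for $n$ large, and the corresponding spatial increments converge by uniform convergence. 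Combining this with Fatou's lemma (and harmlessly absorbing the gap between $\delta$ and $\delta'$ into a slight increase of $\epsilon$) yields
\begin{equation*}
P\bigl(\omega_\eta(\delta) \geq \delta^{1/\beta - \epsilon}\bigr) \leq C \delta^{c \epsilon}.
\end{equation*}

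Having transferred the estimate to $\eta$, I would apply the Borel--Cantelli lemma along the dyadic sequence $\delta_k = 2^{-k}$: since $\sum_k \delta_k^{c\epsilon}$ converges, almost surely there exists a random $k_0$ with $\omega_\eta(2^{-k}) < 2^{-k(1/\beta - \epsilon)}$ for all $k \geq k_0$. Given $s < t$ in $[0, t_\eta]$ with $|s-t|$ small, choose $k \geq k_0$ with $2^{-k-1} < |s-t| \leq 2^{-k}$ and combine with the definition of $\omega$ to obtain $|\eta(s) - \eta(t)| \leq C |s-t|^{1/\beta - \epsilon}$, which is the desired bound. Since $\epsilon > 0$ can be taken arbitrarily small, this shows that $\eta$ is almost surely $h$-H\"older continuous for every $h < 1/\beta$.

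The only nontrivial technical point is the passage from $\eta_n$ to $\eta$ in the second paragraph: $\omega(\cdot, \delta)$ is not continuous on $(\mathcal{C}(\overline{\mathbb{D}}), \rho)$ because the perturbation $t_{\lambda_n} \to t_\lambda$ slightly disturbs the constraint $t - s \leq 2\delta$, forcing us to compare $\omega_\eta(\delta)$ with $\omega_{\eta_n}(\delta')$ for $\delta' > \delta$. This costs only an arbitrarily small increment in the exponent, which does not affect the final H\"older exponent.
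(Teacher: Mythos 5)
Your proposal is correct and follows essentially the same route as the paper's own proof: both pass from the uniform-in-$n$ tail bound of Proposition \ref{HC-upper} to $\eta$ via Skorokhod coupling and convergence in the $\rho$-metric, then apply Borel--Cantelli along dyadic scales $\delta_k=2^{-k}$ and a standard chaining step, finally letting $\epsilon\to 0$. The only difference is that you spell out the lower-semicontinuity of the modulus-of-continuity functional under $\rho$-convergence (comparing $\omega_\eta(\delta)$ with $\omega_{\eta_n}(\delta')$ for $\delta'>\delta$), a technical point the paper treats implicitly when asserting $P(\widetilde A(\delta))\le C\delta^c$ from the convergence $\eta_n\Rightarrow\eta$.
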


\begin{proof}
Skorokhod's representation theorem asserts that we can couple $\{ \eta_{n} \}_{n \ge 1 }$ and $\eta$ in the same probability space such that $\rho ( \eta_{n} , \eta ) \to 0$ almost surely. Here we recall that the metric $\rho$ is defined as in Section  \ref{metric}.
From now on, we assume this coupling. We note that the inequality \eqref{exp-tail} guarantees that $t_{\eta} \in (0, \infty)$ almost surely.

 Without loss of generality, pick
 \begin{equation}\label{hdef}
 h\in\left(\frac{1}{\beta}-\frac{1}{10}, \frac{1}{\beta}\right),
 \end{equation}
and set 
\begin{equation}
    \epsilon =\frac{1}{\beta} - h.
\end{equation} 
Define the event $\widetilde{A}(\delta)$ (and note that it is measurable in the Borel $\sigma$-algebra generated by the metric $\rho$) by 
\begin{equation*}
\widetilde{A}(\delta) = \Big\{ \text{There exist } s, t \text{ with } 0 \le s < t \le t_{\eta} \text{ such that } t-s \le \delta \text{ and } |\eta (s) - \eta (t) | \ge \delta^{h} \Big\}.
\end{equation*}

\begin{table}[!h]
  \centering
  \begin{tabular}{l|cr}
    \hline \hline
    Symbol  & Definition and first appearance    \\
    \hline \hline
    $\gamma_n$ &  Loop-erased random walk on $2^{-n}\mathbb{Z}^3$ in $\mathbb{D}$ defined in \eqref{gammandef} \\ \hline
    ${\cal U}_{n}$ &  Wired uniform spanning tree on $G_{n} = D_{n} \cup \partial D_{n}$ from Section \ref{UST} \\ \hline
    $d_{{\cal U}_{n}} (\cdot, \cdot ) $ & Graph distance on  ${\cal U}_{n}$ defined in Section \ref{UST}\\ \hline 
    $\{ \tau_{l} \}_{l=1}^{N}$ & Sequence of random times as defined in \eqref{holder-2-2-1} and \eqref{0726-a-1} \\ \hline
    $x_{l} $ & $ \gamma_{n} ( \tau_{l} ) $ defined below \eqref{0726-a-1} \\ \hline 
    $F_{(i)} $ & Events defined  in \eqref{holder-a-2} \\ \hline
    $G = \{ y^{j} \}_{j=1}^{M}$ & Epsilon-net of mesh size $r$ satisfying \eqref{0726-b-1} \\ \hline
    $y_{l} = y^{j_{l}} $ & One of the nearest points from $x_{l}$ among $G$ defined below \eqref{0726-b-1}\\ \hline
    $R^{l}$ & SRW started at $y_{l}$ considered in \eqref{0726-c-1} \\ \hline
    ${\cal U}^{1,l}_{n}$ & Subtree generated in Wilson's algorithm as described in \eqref{0726-c-1} \\ \hline
    $t^{l}$ &  First time that $R^{l}$ hits ${\cal U}^{1,l-1}_{n} \cup \partial D_{n}$ defined in \eqref{0726-c-1}\\ \hline 
    $w_{l} $ &  $R^{l} ( t^{l} )$ defined above \eqref{0726-h} \\ \hline 
    $H$, $I$ and $K_n$ & Events as defined in \eqref{0726-h}, \eqref{0726-i} and \eqref{0630-w} resp. \\ \hline
    $\gamma_{x, \partial}$ & Unique simple path in ${\cal U}_{n}$ from $x$ to $\partial D_{n}$ defined above \eqref{Udef}\\ \hline
    $U_{x}=T^{\gamma_{x, \partial}}_{x,\sqrt{r}}$ &  First time that $\gamma_{x, \partial}$ exits from $B (x, \sqrt{r} )$ defined in \eqref{Udef}\\ 
   \hline \hline
  \end{tabular}
  \caption{List of notation used in Section \ref{sec:3}.}
  \label{symbols-3}
\end{table}

Recall the definition of $K_n$ in \eqref{0630-w}. 
Since $\eta_{n} (t) = \gamma_{n} \big( 2^{\beta n} t \big)$ and $\eta_{n}$ converges to $\eta$ in distribution in $\rho$-metric, we have that 
\begin{equation}\label{0626-d-2}
P (\widetilde{A}(\delta))  \le C\delta^{c}.
\end{equation}

Now we are ready to prove \eqref{0626-d-1}. Applying the inequality \eqref{0626-d-2} to the case that $\delta = 2^{-m}$ with $2^{\frac{\epsilon m}{2}} > 100$ and in view of the Borel-Cantelli lemma,  it holds that with probability one, there exists $m_{1} < \infty$ such that the event $\widetilde{A}(2^{-m})$ holds for all $m \ge m_{1}$. 

Take $0 \le s < t \le t_{\eta}$. If $t- s \ge 2^{-m_{1}}$, we see that 
\begin{equation*}
 \frac{|\eta (s) - \eta (t) |}{|s-t|^{h}} \le 2^{m_{1} + 1}.
\end{equation*}
On the other hand, if $0 < t-s < 2^{-m_{1}}$, we can find some $m \ge m_{1}$ such that $ 2^{-m -1} \le t-s < 2^{-m}$. Since $\widetilde{A}(2^{-m})$ holds for this $m$,  we have that there exist $C=C(h)$ and $C'=C'(h)$ such that 
\begin{equation*}
  |\eta (s) - \eta (t) | \le 2^{- m h } \le C (t-s)^{h} \quad \mbox{ and }\quad  \frac{|\eta (s) - \eta (t) |}{|s-t|^{h}} \le C'.
\end{equation*}
Taking supremum, we see that \eqref{0626-d-1} holds for $h$ defined in \eqref{hdef}. 
To see that it holds simultaneously for all $h<1/\beta$, take $h=1/\beta-2^{-k}$ for integer $k\geq 4$ and let $k\to\infty$.
\end{proof}

\section{The critical case}\label{sec:4}
In this section, we prove the second half of Theorem \ref{main}, namely that almost surely the scaling limit of $\{ \eta_{n} \}_{n \ge 1}$ is not ${1}/{\beta}$-H\"older continuous. See  Proposition \ref{CRITICAL} below.

We give a brief explanation of the proof here. As illustrated in Figure \ref{trav-f}, we consider a sequence of cubes $\{ Q_{i} \}_{i=0}^{M}$ of side length $2^{-m}$. The center of the first cube $Q_{0}$ is the origin from which the LERW $\gamma_{n}$ starts.
We are then interested in the type of event (which we refer to as ``the event $A$'') on which, heuristically speaking, the following conditions are fulfilled:
\begin{itemize}
\item The LERW $\gamma_{n}$ keeps going to the right until it exits from the union of $Q_{i}$ (we call this union a ``tube'') with no big backtracking as described in Figure \ref{trav-f};  

\item The number of points in $Q_{i} \cap 2^{-n} \mathbb{Z}^{3}$ hit by $\gamma_{n}$ is bounded above by $C 2^{- \beta m} 2^{\beta n}$ for all $0 \le i \le M$. 
\end{itemize}
See \eqref{am} for the precise definition of the actual event $A$ we will be working with in this section. Choosing the constant $C < \infty$ appropriately, we will give a lower bound on the probability of such events in Proposition \ref{lem-critical}.
Note that if the event $A$ occurs, $\gamma_{n}$ exits from $B \big( M 2^{-m} \big)$ within $C M 2^{- \beta m} 2^{\beta n}$ steps, which is much smaller than the typical number of steps for $\gamma_{n}$ to exit from that ball (the typical number of steps is comparable to $M^{\beta} 2^{- \beta m} 2^{\beta n}$).
Consequently, we have a desired upper bound on modulus of continuity of $\gamma_{n}$ restricted in the the tube
on the event $A$; see Proposition \ref{cor-cri-1} for precise statements.
 
To conclude with an a.s.\ statement, we will employ the following iteration argument to which is similar to that used in  \cite[Proposition 6.6]{BM} and \cite[Proposition 8.11]{S}. Namely, as illustrated in Figure \ref{iteration-f}, we prepare a sequence of concentric boxes $\{ B^{l} \}_{l =1}^{M'}$ with some large $M'$ and check whether for each $l$ the LERW $\gamma_{n}$ restricted in a tube $Q$ contained in the annulus $B^{l} \setminus B^{l-1}$ travels a long distance in a short time as discussed in the previous paragraph, which can be described by a translate of the event $A$. Conditioning $\gamma_{n}$ up to the first time that it exits from $B^{l-1}$,
we will give a lower bound on the (conditional) probability that $\gamma_{n}$ restricted in $Q$ speeds up as above (see Proposition \ref{lem-con-2} below for this). 
Taking $M'$ (the number of concentric boxes) sufficiently large and iterate the argument above, we can show that the scaling limit $\eta$ is not $1/\beta$-H\"older continuous in Proposition \ref{CRITICAL} at the end of this section. 

Throughout this section, we often omit the superscript $(n)$ when we write the discrete ball $B^{(n)} (x, r)$.    

\vspace{3mm}

\begin{figure}[h]
\begin{center}
\includegraphics[scale=0.7]{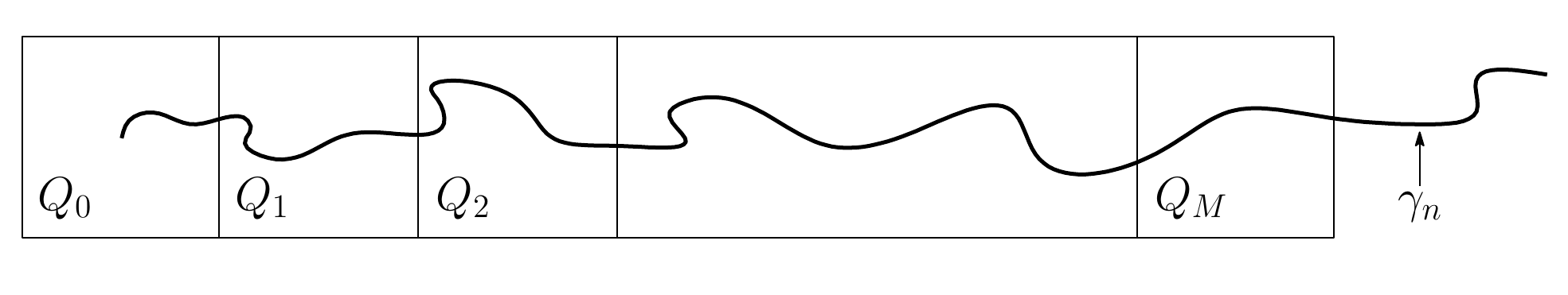}
\caption{Illustration for the event $A$.}\label{trav-f}
\end{center}
\end{figure}

\vspace{3mm}
\begin{figure}[h]
\begin{center}
\includegraphics[scale=0.7]{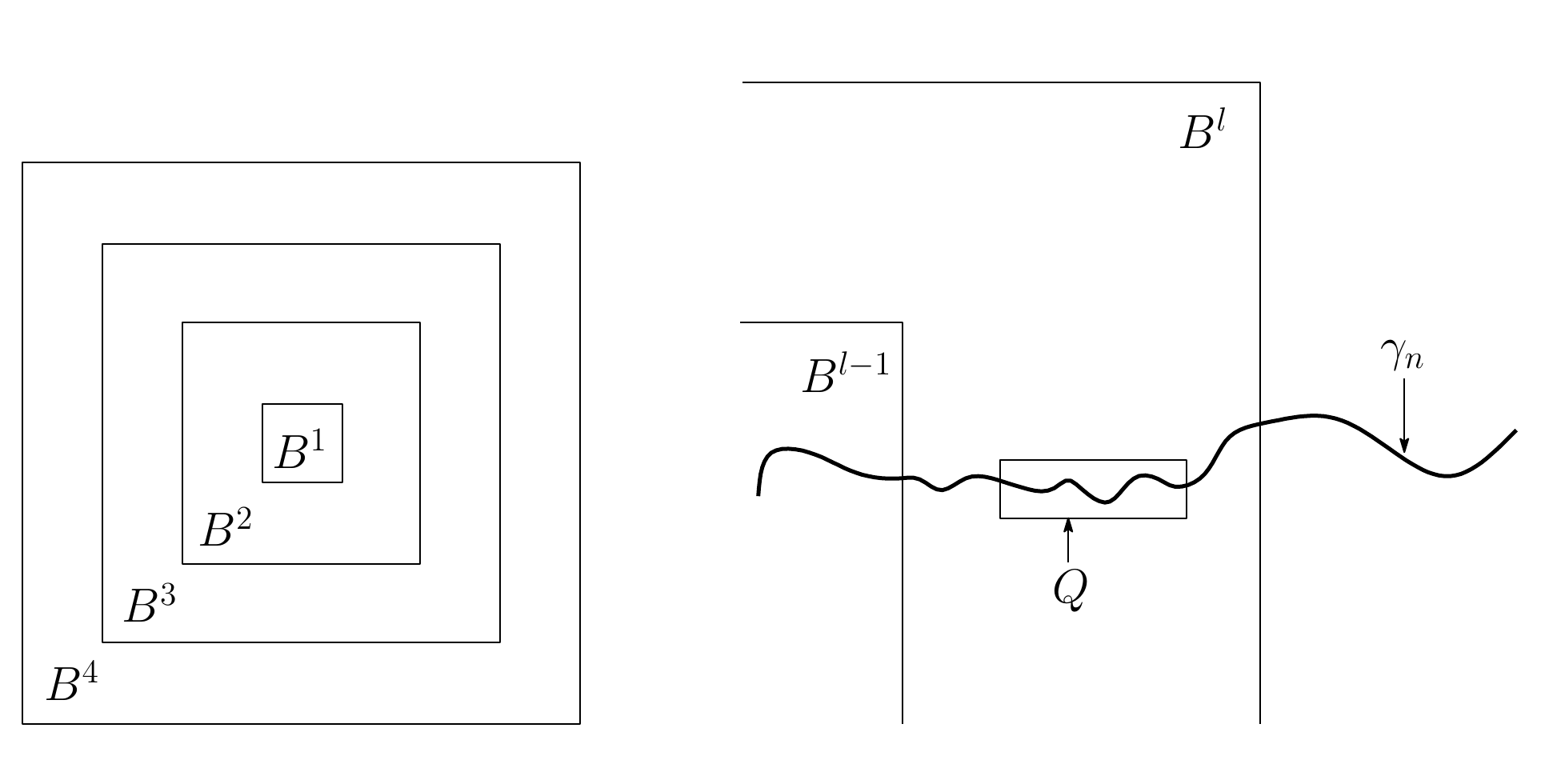}
\caption{Illustration for an iteration argument. The LERW $\gamma_{n}$ speeds up in the tube $Q$ for each $l$.}\label{iteration-f}
\end{center}
\end{figure}

\subsection{Tube-crossing  estimates}
In this subsection, we establish estimates on the probability that a LERW stays in a tube and travels atypically fast in Proposition \ref{lem-critical} and show that with positive probability a LERW is not $1/\beta$-H\"older continuous, which is summarized in Proposition \ref{cor-cri-1}.

We begin with our setup.
Let 
\begin{equation}\label{eq:mdef}
m\in \mathbb{N}\mbox{ such that $m_0:=m^{\frac{1}{10}} \in [10,\infty)\cap \mathbb{N}$.}
\end{equation}
\begin{dfn}\label{partition}
Define a collection of cubes $\{ Q_{i} \}_{i \ge 0} = \{ Q_{i}^{m} \}_{i \ge 0}$  by 
\begin{equation}
Q_{i}^{m} = \big\{ x \in \mathbb{R}^{3} \ \big| \ \| x -  x_{i} \|_{\infty} \le 2^{-m} \big\}  \ \ \ \ \text{ where }  \ \ \ \ x_{i} = \big( i \cdot 2^{-m + 1}, 0, 0 \big) \ \ \text{ for } \ \ i \ge 0.
\end{equation}
Namely, $Q_{i}^{m} $ stands for the cube of side length $2^{-m +1}$ centered at $x_{i}$. 

For $a < b$, we write 
\begin{align*}
&H [a, b]  = \big\{ x = (x^{1}, x^{2}, x^{3}) \in \mathbb{R}^{3} \ \big| \  a \le x^{1} \le b, \ -2^{-m} \le x^{2}, \, x^{3} \le 2^{-m} \big\}, \notag \\
&H (a, b)  = \big\{ x = (x^{1}, x^{2}, x^{3}) \in \mathbb{R}^{3} \ \big| \  a < x^{1} < b, \ -2^{-m} \le x^{2}, \, 
x^{3} \le 2^{-m} \big\}.
\end{align*}
We define $H (a, b] $ and $H [a, b)$ similarly. Also, we set 
\begin{align}\label{0902-d-1}
&H (a) =  \big\{ x = (x^{1}, x^{2}, x^{3}) \in \mathbb{R}^{3} \ \big| \   x^{1} = a , \ -2^{-m} \le x^{2}, \, x^{3} \le 2^{-m} \big\}, \notag \\
&G (a) =  \big\{ x = (x^{1}, x^{2}, x^{3}) \in \mathbb{R}^{3} \ \big| \   x^{1} = a , \ -2^{-m-1} \le x^{2}, \, x^{3} \le 
2^{-m-1} \big\}.
\end{align}
\end{dfn}

We regard $H (a)$ as a ``partition'' of $H ( - \infty, \infty )$. By construction, we have $G (a) \subset  H (a)$. 
We also note that writing 
\begin{equation}\label{a_i}
a_{i} = 2^{-m} (2i - 1 ),
\end{equation}
it holds that $Q_{i}^{m} = H [a_{i}, a_{i+1}]$. Thus, $H (a_{i} ) $ (resp. $H (a_{i+1} )$) 
coincides with the left (resp. right) face of $Q_{i}^{m}$.

Recall that $S=S^{(n)}$ stands for the SRW on $2^{-n} \mathbb{Z}^{3}$ started at the origin. Assume that $$2^{-n}<\exp \big\{ - 2^{m^{100}} \big\}.$$ 
By linear interpolation, we may assume that $S (k)$ is defined for every non-negative real $k$ and that $S[0, \infty)$ is a continuous curve.

For a continuous curve $\lambda$ lying in $\mathbb{R}^{3}$, we define a family of hitting times $\{ t_{\lambda} (a) \}_{a \in \mathbb{R} }$ by 
\begin{align}\label{t-lambda}
t_{\lambda} (a) = \inf  \big\{ k \ge 0 \ \big| \ \lambda (k) \in H (a)  \big\},
\end{align}
with the convention that $\inf \emptyset = + \infty$. Let 
\begin{equation}\label{eq:qdef}
q = 2^{- m - m_{0} }.
\end{equation}

We then define the events $A_{i}$ as follows
\begin{align}\label{0907-1}
&A_{0} = \Big\{ t_{S} (a_{1} ) < \infty, \  S \big( t_{S} ( a_{1} ) \big) \in G ( a_{1} ), \ S \big[ 0, t_{S} (a_{1} ) \big] \subset  Q^{m}_{0}, \notag\\
&\qquad\qquad\qquad\qquad\qquad\qquad\qquad\qquad\ S \big[ t_{S} ( a_{1} - q  ), t_{S} (a_{1} ) \big] \cap H \big( a_{1} - 2 q \big) = \emptyset \Big\}, \notag \\
&\text{and for }i\geq 1,\\
&A_{i} = \Big\{ t_{S} (a_{i} ) < t_{S} (a_{i+1} ) < \infty, \  S \big( t_{S} ( a_{i+1} ) \big) \in G ( a_{i+1} ),  \notag \\
& \ \ \ \ S \big[ t_{S} (a_{i}), t_{S} (a_{i+1} )  \big] \subset  H (a_{i} -q, a_{i+1} ], S \big[ t_{S} ( a_{i+1} - q  ), t_{S} (a_{i+1} ) \big] \subset H \big[ a_{i+1} - 2 q, a_{i+1} \big]   \Big\} .\notag
\end{align}
In other words, the event $A_{0}$ ensures that the SRW exits $Q^{m}_{0}$ from $G (a_{1} )$ and excludes excessive backtracking of $S$ near time $t_S(a_1)$. 
Similarly, the event $A_{i}$ also guarantees that the SRW hits $H (a_{i+1})$ in $G ( a_{i+1} )$ and  excludes excessive backtracking of $S$, near both the beginning and the end of the time interval $\big[ t_{S} (a_{i}), t_{S} (a_{i+1} )  \big]$.
We note that the event $A_{i}$ is measurable with respect to $S \big[ t_{S} (a_{i}), t_{S} (a_{i+1} )  \big]$ for $i \ge 1$, while $A_{0}$ is an event measurable with respect to $S \big[ 0, t_{S} (a_{1} ) \big]$. We set 
\begin{equation}\label{0907-b-1}
F_{i} = \bigcap_{k=0}^{i} A_{k}.
\end{equation}
We observe that on the event $F_i$ it holds that $0 \le t_{S} (a) < t_{S} (a' ) \le t_{S} (a_{i+1}) < \infty$ for all $0 \le a < a' \le a_{i+1}$.

\medskip

Given a path $\lambda:[0,t_\lambda]\to 2^{-n} \mathbb{Z}^3$, a time $t\in[0,t_\lambda]\cap \mathbb{Z}$ is called a cut time for $\lambda$, if $\lambda[0,t]\cap\lambda[t+1,t_\lambda]=\emptyset$. In this case, $\lambda(t)$ is called a cut point for $\lambda$.

We next consider a special type of cut time defined on the event $A_{i}$.

\begin{dfn}\label{special-cut}
Suppose that $A_{i}$ defined in \eqref{0907-1} occurs. For each $i \ge 1$, we say  $k$ is a {\bf nice cut time} or $S$ has a nice cut point in $Q_{i}^{m}$ if the following four conditions are fulfilled:
\begin{itemize}
\item[(i)] $t_{S} \big( a_{i} + \frac{q}{2} \big) \le k \le t_{S} ( a_{i} + q )$,

\item[(ii)] $S \big[ t_{S} (a_{i} ), k \big] \cap S \big[ k+1, t_{S} ( a_{i +1 } ) ] = \emptyset $,

\item[(iii)] $S (k) \in H \big[ a_{i} + \frac{q}{2}, a_{i} + q \big] $,

\item[(iv)] $ S [k, t_{S} ( a_{i+1} ) ] \cap H (a_{i} ) = \emptyset $.

\end{itemize}
See Figure \ref{CT-f}. 
\end{dfn}

\begin{figure}[h]
\begin{center}
\includegraphics[scale=1.0]{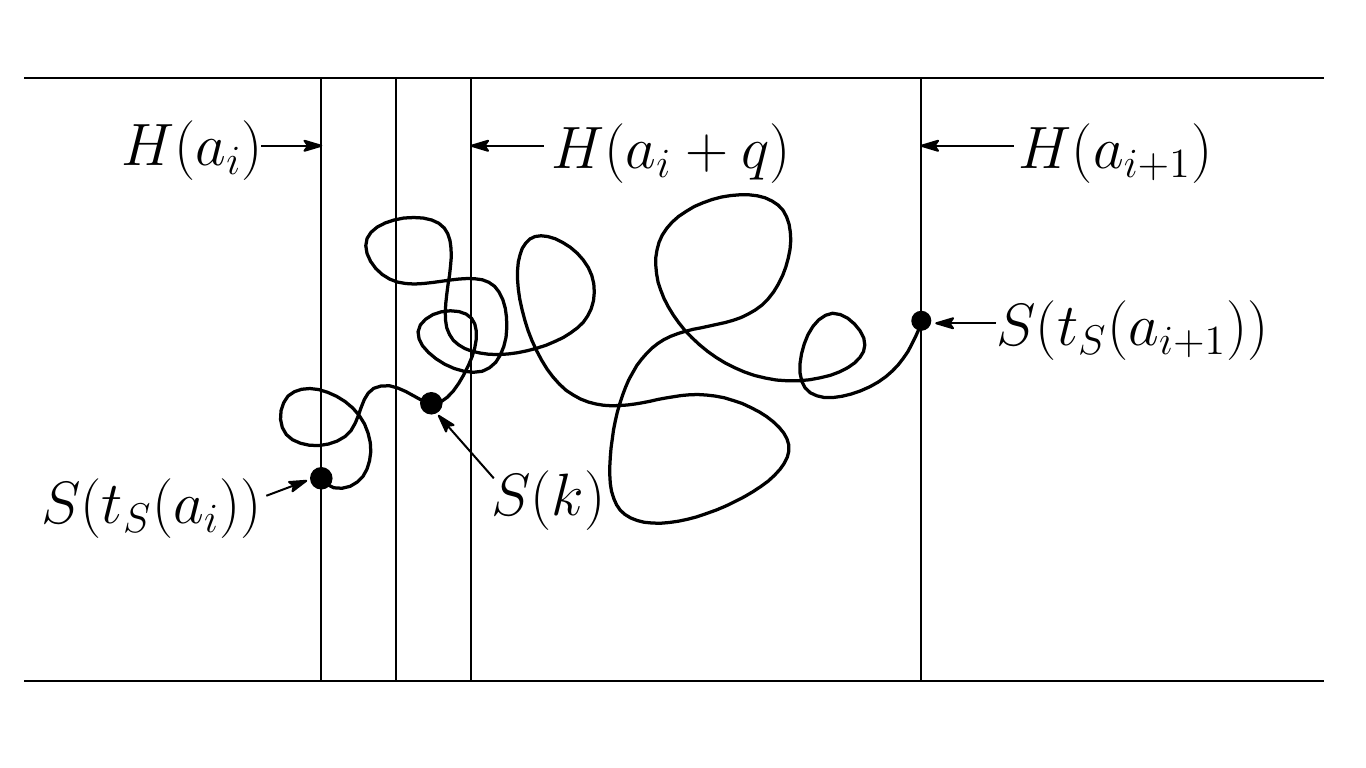}
\caption{Illustration for a nice cut point $S (k)$ in $Q_{i}^{m}$. The vertical line between $H (a_{i})$ and $H (a_{i} + q)$ stands for $H (a_{i} + \frac{q}{2} )$.}\label{CT-f}
\end{center}
\end{figure}

The event $B_{i}$ is then defined as
\begin{equation}\label{Bi}
B_{i} = \big\{ S\text{ has a nice cut point in } Q_{i}^{m} \big\}  \ \text{ for } \ i \ge 1.
\end{equation}
We note that the event $B_{i}$ is measurable with respect to $S \big[ t_{S} (a_{i}), t_{S} (a_{i+1} )  \big]$.
We define
\begin{equation}\label{0907-b-2}
G_{i} = \bigcap_{k=1}^{i} B_{k}.
\end{equation}

We set 
\begin{align}\label{09070}
&\xi_{i} = \text{LE} \big( S \big[ 0, t_{S} (a_{i+1} ) \big] \big), \  \   \ \  \ \  \lambda_{i} =  \text{LE} \big( S \big[ t_{S} (a_{i}), t_{S} (a_{i+1} ) \big] \big) \ \ \  \text{ for } \  i \ge 0, \notag \\ 
&\ \ \ \text{ and } \ \xi_{0}' = \xi_{0} \ \ \ \   \ \ \ \ \xi_{i}' = \xi_{0} \oplus \lambda_{1} \oplus \cdots \oplus \lambda_{i} \  \text{ for }  \  i \ge 1.
\end{align}
Note that $\xi_{i}'$ is not necessarily a simple curve, and thus $\xi_{i} \neq \xi_{i}'$ in general. However, in the next lemma, we show that one can bound the length of $\xi_{i}$ by that of $\xi_{i}'$ on the event $F_{i} \cap G_{i}$ defined in \eqref{0907-b-1} and \eqref{0907-b-2} respectively.

\begin{lem}\label{lem1-critical}
Let $i \ge 1$. Suppose that the event $F_{i} \cap G_{i}$ occurs. Then, we have 
\begin{equation}\label{0902'}
{\rm len} ( \xi_{i} ) \le   {\rm len} (\xi_{0} ) + \sum_{l = 1}^{i} \Big\{ {\rm len} ( \lambda_{l} ) + \Big| \xi_{i} \cap H \big( a_{l} - q, a_{l} + q \big]  \Big| \Big\}={\rm len} ( \xi_{i}' ) +\sum_{l = 1}^{i} \Big\{  \Big| \xi_{i} \cap H \big( a_{l} - q, a_{l} + q \big]  \Big| \Big\},
\end{equation}
where for $D \subset \mathbb{R}^{3}$ we write $|D| $ for the number of points in $D \cap 2^{-n} \mathbb{Z}^{3}$. 
\end{lem}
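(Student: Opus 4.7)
The plan is to turn the nice cut points $S(\kappa_l)$ coming from $G_i$ into genuine cut times of the full walk $S[0, t_S(a_{i+1})]$, and then to compare the vertices of $\xi_i$ and $\xi_i'$ region-by-region along the $x^1$-axis.

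First, I would verify that each $\kappa_l$, $l = 1, \ldots, i$, is a cut time of $S[0, t_S(a_{i+1})]$. Condition (ii) of Definition \ref{special-cut} gives $S[t_S(a_l), \kappa_l] \cap S[\kappa_l+1, t_S(a_{l+1})] = \emptyset$; combining this with the inclusions $S[0, t_S(a_l)] \subset \{x^1 \le a_l\}$ (from $F_{l-1}$), $S[t_S(a_l), \kappa_l] \subset \{x^1 \le a_l + q\}$ (since $\kappa_l \le t_S(a_l+q)$), $S[\kappa_l, t_S(a_{l+1})] \subset \{x^1 > a_l\}$ (condition (iv)), and $S[t_S(a_k), t_S(a_{k+1})] \subset \{x^1 > a_k - q > a_l\}$ for $k \ge l+1$ (from $A_k$), one obtains vertex-disjointness of $S[0, \kappa_l]$ and $S[\kappa_l+1, t_S(a_{i+1})]$.

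Next, I would partition the vertices of $\xi_i$ by their $x^1$-coordinate into boundary strips $B_l := \xi_i \cap H(a_l - q, a_l + q]$ (for $l = 1, \ldots, i$) and bulk pieces. The crucial claim is: each bulk vertex $v \in \xi_i$ with $x^1(v) \in (a_l + q, a_{l+1} - q]$ (and $l \ge 1$) belongs to $\lambda_l$, and each bulk vertex with $x^1 \le a_1 - q$ belongs to $\xi_0$. Such $v$ is visited by $S$ only during segment $l$ (the other segments avoid this bulk by the events $A_k$), its last visit happens strictly after $\kappa_l$ (since the walk must cross $H(a_l+q)$ to reach $v$, while $\kappa_l \le t_S(a_l+q)$), and condition (ii) also makes $\kappa_l$ a cut time of segment $l$ itself. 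Applying cut-time factorization of loop erasure at $\kappa_l$ to both $S[0, t_S(a_{i+1})]$ and segment $l$, and at $\kappa_{l+1}$ to peel off the walk past $Q_{l+1}$, then identifies $v$ as a vertex of $\lambda_l$.

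With these set inclusions, direct vertex counting gives
\[
|\xi_i| \le |\xi_0| + \sum_{l=1}^i |\lambda_l| - 2i + \sum_{l=1}^i |B_l|,
\]
because each of the $i$ shared endpoints $S(t_S(a_l))$ lies in a boundary strip $B_l$ and is thus counted twice on the right-hand side (once in a path-cardinality term and once in $|B_l|$). Subtracting one and comparing with ${\rm len}(\xi_i') = |\xi_0| + \sum_l|\lambda_l| - i - 1$ yields ${\rm len}(\xi_i) \le {\rm len}(\xi_i') - i + \sum_l |B_l|$, which is stronger than the claimed inequality. The main obstacle is the bulk-containment claim: proving it cleanly requires tracing the loop-erasure procedure through two layers of cut-time factorization (at $\kappa_l$ and at $\kappa_{l+1}$), and using the thin-slab confinement of the walk near $H(a_{l+1})$ (from $A_l$ together with condition (iv) of $\kappa_{l+1}$) to rule out $v$ being inherited from segment $l+1$.
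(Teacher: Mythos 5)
Your argument is correct, and it is a genuinely different route from the paper's. The paper proves \eqref{0902'} by induction on $i$: it applies the concatenation formula for loop erasure once, at the single (nice) cut time $k_i$ lying in the newest slab, and then must establish the auxiliary claim \eqref{0902-a-1} that the strip counts $|\xi_j \cap H(a_l - q, a_l + q]|$ do not increase when passing from $j = i-1$ to $j = i$ — a step that requires the ``no big backtracking'' conditions in $A_{i-1}$, $A_i$. You instead observe that every $k_l$, $l = 1, \ldots, i$, is a \emph{global} cut time of $S[0, t_S(a_{i+1})]$, factor $\xi_i$ simultaneously at all of them, and then partition the vertices of $\xi_i$ by their $x^1$-coordinate. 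The bulk-containment claim (that a vertex of $\xi_i$ with $x^1 \in (a_l+q, a_{l+1}-q]$ must lie on $\lambda_l$) then follows from one application of the concatenation rule to $\text{LE}(S[k_l, k_{l+1}])$, using that $S[t_S(a_{l+1}), k_{l+1}]$ stays in the thin slab $H(a_{l+1}-q, a_{l+1}+q]$ (this confinement is a consequence of $A_{l+1}$ together with condition (i) $k_{l+1} \le t_S(a_{l+1}+q)$, rather than condition (iv), but this is a minor misattribution). Your proof buys a cleaner, induction-free argument and a slightly stronger bound, with the extra $-i$ on the right-hand side, at the price of needing the global-cut-time verification up front; the paper's inductive version avoids that global statement but pays for it by tracking strip cardinalities across the induction step. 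Both are correct; your version is arguably more transparent.

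Two small points worth tightening if you write this up in full. First, when you assert $S[t_S(a_l), \kappa_l] \subset \{x^1 \le a_l + q\}$, you should note explicitly that this relies on the walk being confined to the tube $H(a_l-q, a_{l+1}]$ during $[t_S(a_l), t_S(a_{l+1})]$ (from $A_l$), so that $t_S(a_l+q)$ really is the first time it crosses the hyperplane $\{x^1 = a_l+q\}$. Second, the attribution ``$A_l$ together with condition (iv) of $\kappa_{l+1}$'' for the confinement near $H(a_{l+1})$ should read ``$A_{l+1}$ together with condition (i) of $\kappa_{l+1}$''; condition (iv) is what you need instead to exclude $v$ from $\text{LE}(S[\kappa_{l+1}, t_S(a_{i+1})])$, i.e.\ from the later segments.
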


\begin{proof} The equality in the claim follows from definition, hence we only prove the inequality. 

To compare $\xi_{i}$ and $\xi_{i}'$ on the  event $F_{i} \cap G_{i}$, we will first consider the case that $i=1$. Suppose that 
the event $F_{1} \cap G_{1}$ occurs. By definition of the loop-erasing procedure, $\xi_{1}$ is generated in the following way. Let 
\begin{equation*}
s_{1} = \inf \big\{ k \ge 0 \ \big| \ \xi_{0} (k) \in S \big[ t_{S} (a_{1}), t_{S} ( a_{2} ) \big] \big\} 
\end{equation*}
 and 
\begin{equation*}   \  t_{1} = \sup \big\{  k\in [t_{S} (a_{1}) , t_{S} (a_{2} )] \ \big| \ S(k) = \xi_{0} (s_{1}) \big\}.
\end{equation*}
Then it follows that 
\begin{equation*}
\xi_{1} = \xi_{0} [0, s_{1}]  \oplus  \text{LE} \big( S \big[ t_{1}, t_{S} (a_{2} ) \big] \big).
\end{equation*}
Since $S \big[ t_{S} ( a_{1} ), t_{S} ( a_{2} ) \big] \cap H \big( a_{1} - q \big) = \emptyset$ by the event $A_{1}$, we have that 
\begin{equation*}
\xi_{0} (s_{1} ) \in H \big( a_{1}- q, a_{1} \big]   \ \ \text{ and }  \ \             t_{\xi_{0}} \big( a_{1} - q \big) \le s_{1} \le t_{\xi_{0}} (a_{1} ).
\end{equation*}
Let $k_{1}$ be a nice cut time in $Q_{1}^{m}$. By Definition \ref{special-cut}(iv), it follows that $t_{1} \le k_{1}$ (because otherwise $S (t_{1}) \in S \big[ k_{1}, t_{S} (a_{2} ) \big] \cap Q_{0}^{m} $ and thus $S \big[ k_{1}, t_{S} (a_{2} ) \big] \cap H (a_{1}) \neq \emptyset $, which contradicts Definition \ref{special-cut}(iv) above). As a cut point of a path must belong to its loop-erasure, one can decompose $\text{LE} ( S [ t_{1}, t_{S} (a_{2} ) ] )$ as follows:
\begin{equation*}
\text{LE} \big( S \big[ t_{1}, t_{S} (a_{2} ) \big] \big) = \text{LE} \big( S \big[ t_{1}, k_{1} \big] \big) \oplus \text{LE} \big( S \big[ k_{1}, t_{S} (a_{2} ) \big] \big).
\end{equation*}
Similarly, it follows that  
\begin{equation*}
\lambda_{1} = \text{LE} \big( S \big[ t_{S} (a_{1}) , k_{1} \big] \big) \oplus \text{LE} \big( S \big[ k_{1}, t_{S} (a_{2} ) \big] \big).
\end{equation*}
Definition \ref{special-cut}(i) guarantees that $k_{1} \le t_{S} (a_{1} + q ) $. Combining this with the condition $S \big[ t_{S} ( a_{1} ), t_{S} ( a_{2} ) \big] \cap H \big( a_{1} - q \big) = \emptyset$ in the event $A_{1}$, we see that 
\begin{equation*}
S \big[ t_{1}, k_{1} \big]   \subset  H \big( a_{1} - q, a_{1} + q \big].
\end{equation*}
Consequently, it follows that 
\begin{align}\label{0902-1}
\xi_{1} &=  \xi_{0} [0, s_{1}]  \oplus  \text{LE} \big( S \big[ t_{1}, t_{S} (a_{2} ) \big] \big)  \notag \\
&= \xi_{0} [0, s_{1}]  \oplus   \text{LE} \big( S \big[ t_{1}, k_{1} \big] \big) \oplus \text{LE} \big( S \big[ k_{1}, t_{S} (a_{2} ) \big] \big) \notag \\
&\subset \xi_{0} \cup \text{LE} \big( S \big[ t_{1}, k_{1} \big] \big) \cup \lambda_{1} \notag \\
&\subset \xi_{0} \cup \big( \xi_{1} \cap H \big( a_{1} - q, a_{1} + q \big] \big) \cup \lambda_{1}.  
\end{align}

We next deal with the case of general $i$'s. On the event $F_{i} \cap G_{i}$, let 
\begin{equation*}
s_{i} = \inf \big\{ k \ge 0 \ \big| \ \xi_{i-1} (k) \in S \big[ t_{S} (a_{i}), t_{S} ( a_{i+1} ) \big] \big\} 
\end{equation*}
 and 
 \begin{equation*}   t_{i} = \sup \big\{ k\in[ t_{S} (a_{i}) , t_{S} (a_{i+1} )] \ \big| \ S(k) = \xi_{i-1} (s_{i}) \big\}.
\end{equation*}
A similar argument as above shows that on the event $F_{i} \cap G_{i}$ we have
\begin{align}\label{0902-2}
&\xi_{i} = \xi_{i-1} [0, s_{i} ] \oplus \text{LE} \big( S \big[ t_{i}, k_{i} \big] \big) \oplus \text{LE} \big( S \big[ k_{i}, t_{S} (a_{i+1} ) \big] \big), \notag \\
&\lambda_{i} = \text{LE} \big( S \big[ t_{S} (a_{i}) , k_{i} \big] \big) \oplus \text{LE} \big( S \big[ k_{i}, t_{S} (a_{i+1} ) \big] \big) \ \ \text{ and }  \  \  S [ t_{i},  k_{i} ] \subset H \big[ a_{i} - q, a_{i} + q \big], 
\end{align}
where $k_{i}$ is a nice cut time such that $S(k_i)\in Q_{i}^{m}$.

Now we are ready to show \eqref{0902'} by induction.
When $i =1$, the inequality \eqref{0902'} immediately follows from \eqref{0902-1}. Suppose that the inequality \eqref{0902'} holds for $i -1$ and that the event $F_{i} \cap G_{i}$ occurs. Using this assumption and \eqref{0902-2}, we see that
$$
\xi_i \subseteq \xi_{i-1} \cup (\xi_{i} \cap H ( a_{i} - q, a_{i} + q \big]) \cup \lambda_i,
$$
hence,
\begin{align}\label{0902-c-2}
& {\rm len} ( \xi_{i} ) \le {\rm len} ( \xi_{i-1} ) + \Big| \xi_{i} \cap H \big( a_{i} - q, a_{i} + q \big]  \Big| + {\rm len} (\lambda_{i}) \notag \\
 \le &\;{\rm len} (\xi_{0} ) + \sum_{l = 1}^{i-1} \Big\{ {\rm len} ( \lambda_{l} ) + \Big| \xi_{i-1} \cap H \big(a_{l} - q, a_{l} + q \big]  \Big| \Big\} +  \Big| \xi_{i} \cap H \big( a_{i} - q, a_{i} + q \big]  \Big| + {\rm len} (\lambda_{i}).
 \end{align}

However, we claim that for each $1 \le l \le i-1$, 
\begin{equation}\label{0902-a-1}
\Big| \xi_{i-1} \cap H \big( a_{l} - q, a_{l} + q \big]  \Big| = \Big| \xi_{i} \cap H \big( a_{l} - q, a_{l} + q \big]  \Big|.
\end{equation}
The reason is that by the fourth condition of $A_{i}$, we see that $S \big[ t_{i},   t_{S} (a_{i+1} ) \big] \cap H \big( a_{l} - q, a_{l} + q \big] = \emptyset $ for $1 \le l \le i-1$ (note that $a_{i-1} + q < a_{i} - q$ since $m_{0} \ge 10$).  
Using \eqref{0902-a-1} and the first equation in \eqref{0902-2}, we have that 
\begin{equation}\label{0902-c-1}
\xi_{i} \cap H \big( a_{l} - q, a_{l} + q \big] = \xi_{i-1} [0, s_{i} ] \cap H \big( a_{l} - q, a_{l} + q \big].
\end{equation}
On the other hand, since $\xi_{i-1} (s_{i} ) = S (t_{i} )  \in H \big( a_{i} - q, a_{i} \big]$, we claim that
\begin{equation}\label{0902-b-1}
\xi_{i-1} \big[ s_{i}, {\rm len} (\xi_{i-1}) \big] \subset H \big[ a_{i} - 2q , a_{i} \big].
\end{equation}
We will prove \eqref{0902-b-1} by contradiction. Suppose that \eqref{0902-b-1} does not hold. This implies that  $\xi_{i-1}$ hits $H (a_{i}- 2 q) \neq \emptyset$ after time $s_{i}$. However, because $\xi_{i-1} (s_{i} )  \in H \big[ a_{i} - q, a_{i} \big]$, 
we conclude that $S \big[ t_{S} ( a_{i} - q), t_{S} ( a_{i} ) \big] \cap H (a_{i} - 2 q ) \neq \emptyset$. But this 
contradicts the last condition of the event $A_{i-1}$. So \eqref{0902-b-1} must hold. It follows from $m_{0} \ge 10$ again that $H \big[ a_{i} - 2q , a_{i} \big] \cap H \big[ a_{l} - q, a_{l} + q \big]  = \emptyset $ for all  $1 \le l \le i-1$. Consequently, we have 
\begin{equation*}
\xi_{i-1} \cap H \big[ a_{l} - q, a_{l} + q \big] = \xi_{i-1} [0, s_{i} ] \cap H \big[ a_{l} - q, a_{l} + q \big] 
\end{equation*}
for each $1 \le l \le i-1$. Combining this with \eqref{0902-c-1}, we get \eqref{0902-a-1}. Then the inequality \eqref{0902'} for $i$ follows from \eqref{0902-c-2} and \eqref{0902-a-1}.
\end{proof}

We will next deal with the length of each $\lambda_{i}$ defined as in \eqref{09070}. For $C \ge 1$, define the event $D_{i} (C)$ by
\begin{equation}\label{c}
D_{0} = D_{0} (C) = \big\{ {\rm len} ( \xi_{0} ) \le C 2^{- \beta m} 2^{\beta n} \big\} \ \ \text{ and }  \ \ D_{i} = D_{i} (C) =  \big\{ {\rm len} (\lambda_{i} )  \le C 2^{- \beta m } 2^{\beta n} \big\} 
\end{equation}
for $ i \ge 1$, where $\xi_{0}$ is as defined in \eqref{09070}.
We also recall that $G (a)$ is defined as in \eqref{0902-d-1}. The next lemma gives a lower bound on the probability of $A_{i} \cap B_{i} \cap D_{i} (C)$ choosing $C$ sufficiently large, where the event $A_{i}$ and $B_{i}$ are given as in \eqref{0907-1} and \eqref{Bi}.

\begin{lem}\label{lem2-critical}
 There exist universal constants $0 < c_{\ast}, C_{\ast} < \infty$ such that
\begin{equation}\label{0902-d-2}
P \big( A_{0} \cap D_{0} (C_{\ast} ) \big ) \ge c_{\ast} \ \ \text{ and }  \ \  \min_{x \in G (a_{i}) } P^{x} \big( A_{i} \cap B_{i} \cap D_{i} (C_{\ast}) \big) \ge c_{\ast} 2^{- m_{0}}  \ \ \ \forall i \ge 1.
 \end{equation}
 \end{lem}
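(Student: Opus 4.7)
The two bounds in \eqref{0902-d-2} involve different orders of magnitude and I would treat them separately.

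For the first bound, $P(A_0 \cap D_0(C_*)) \ge c_*$, the event $A_0$ asks the SRW from the origin to cross $Q_0^m$ and exit through the central half $G(a_1)$ of the right face, while staying inside the cube and avoiding excessive backtracking near the right face. Applying the invariance principle (or elementary gambler's ruin \eqref{Gambler} coordinate-wise) gives $P(A_0) \ge c > 0$ uniformly in $m, n$. The length event $D_0(C_*) = \{{\rm len}(\xi_0) \le C_* 2^{\beta(n-m)}\}$ concerns a LERW from the origin exiting a region of scale $2^{-m}$, whose mean length is $\asymp 2^{\beta(n-m)}$ by \eqref{growth'} with an exponential tail given by \eqref{exp-tail'}. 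Taking $C_*$ large enough makes $P(D_0(C_*)^c) \le c/2$, and hence $P(A_0 \cap D_0(C_*)) \ge c/2$, so we may set $c_* := c/2$.

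For the second bound, fix $i \ge 1$ and $x \in G(a_i)$. I would factor $A_i \cap B_i \cap D_i(C_*)$ morally into three pieces: (I) a successful crossing of $Q_i^m$ from $x$ to $G(a_{i+1})$ respecting the tube and right-face no-backtracking constraints in $A_i$; (II) the existence of a nice cut point in the slab $H[a_i + q/2, a_i + q]$ (the event $B_i$); (III) the length bound $D_i(C_*)$. Piece (I) has probability of order $1$ by the invariance principle and gambler's ruin (the right-face no-backtracking condition compares distances both of order $q$ and so is itself a constant-probability event). Piece (III) is close to $1$ for $C_*$ large by \eqref{exp-tail'} applied to the LERW of the SRW crossing a cube of scale $2^{-m}$. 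Hence the $2^{-m_0}$ factor must come entirely from piece (II).

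The main source of $2^{-m_0}$ is condition (iv) of Definition \ref{special-cut}: the future walk from the cut time, starting in $H[a_i + q/2, a_i + q]$, must not return to $H(a_i)$ before reaching $H(a_{i+1})$. This is essentially a one-dimensional gambler's ruin event on the $x^1$-projection of the future walk, with the two relevant distances being of order $q$ and of order $2^{-m}$ respectively; by \eqref{Gambler} it has probability $\asymp q / 2^{-m} = 2^{-m_0}$. The remaining cut-time condition (ii), that the past and future of the SRW do not intersect in $\mathbb{Z}^3$, must then be shown to hold with probability $\asymp 1$ under the commitment condition (iv). I would establish this via a separation-lemma style argument in the spirit of \cite[Theorem 3.1]{SS} and the techniques of \cite{Escape}: under commitment, the past walk is confined in a slab of width $\asymp q$ and with positive probability its LERW tip lies in ``general position'' on scale $q$, after which the Beurling-type hittability estimate (Proposition \ref{prop:beurling}) together with \eqref{ES} ensures the future walk avoids the past with constant probability.

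The main technical obstacle is precisely this last step — upgrading condition (ii) to constant probability under commitment. A naive analysis would insert an additional factor $\Es(q \cdot 2^n) \asymp (q 2^n)^{-(2-\beta)}$ from the escape probability, which depends on $n$ and would ruin the estimate. The separation-lemma machinery developed in the 3D LERW literature circumvents this by introducing additional randomization to place the past walk's LERW tip in a separated configuration on scale $q$, after which non-intersection of past and future is of order $1$. Adapting this to the tube setting should be routine but requires careful bookkeeping, especially to guarantee that the resulting cut time actually lies in the target slab $H[a_i + q/2, a_i + q]$ as required by condition (iii).
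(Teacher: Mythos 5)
You have two genuine errors in the treatment of the second bound.

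\textbf{First, you misidentify the source of the $2^{-m_0}$ factor.} You claim that piece (I), the event $A_i$, has probability of order $1$, and that the $2^{-m_0}$ factor comes from piece (II), the nice cut point event $B_i$. In fact it is the other way around. The constraint $S[t_S(a_i), t_S(a_{i+1})] \subset H(a_i - q, a_{i+1}]$ in $A_i$ forces the $x^1$-coordinate of the walk, started at level $a_i$, to hit level $a_{i+1}$ before dropping below $a_i - q$; by the gambler's ruin estimate \eqref{Gambler} this already carries probability $\asymp q / 2^{-m} = 2^{-m_0}$ (you seem to have only noticed the right-face backtracking constraint, which does compare two scales of order $q$, and overlooked the left-side confinement). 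The paper proves $c_1 2^{-m_0} \le P^x(A_i) \le c_2 2^{-m_0}$ uniformly in $x \in G(a_i)$ and then shows $P^x(B_i \cap D_i(C) \mid A_i) \ge c$ with constants independent of $m,n$. Conditionally on $A_i$, the gambler's-ruin cost of the nice cut time's condition (iv) is already absorbed: starting from level $a_i + q$, the unconditional probability of reaching $a_{i+1}$ before $a_i$ is $\asymp q/2^{-m}$ and the probability of reaching $a_{i+1}$ before $a_i - q$ is $\asymp 2q/2^{-m}$, so the conditional probability is $\asymp 1$. Had both of your estimates been correct, you would end up with a total probability $\asymp 2^{-2m_0}$, not $2^{-m_0}$.

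\textbf{Second, your treatment of $D_i$ ignores the conditioning.} You propose to bound $P(D_i^c)$ by \eqref{exp-tail'} as if $\lambda_i$ were the loop erasure of an unconditioned walk. But the relevant walk, conditional on $A_i$, has a distribution substantially tilted by an event of probability $\asymp 2^{-m_0}$, so a naive $P(D_i^c \mid A_i) \le P(D_i^c)/P(A_i)$ bound gains a useless factor of $2^{m_0}$. The paper instead computes the conditional expectation $E^x({\rm len}(\iota))$ directly for the loop erasure $\iota$ of the conditioned walk, using the multi-point function estimate for conditioned walks (Proposition~\ref{prop:npoint2}) and explicit Green's function / escape probability estimates in the tube, and then applies Markov's inequality. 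This conditional moment estimate is the genuinely technical part of the lemma and cannot be replaced by citing the unconditioned tail bound.

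As a lesser point, your proposed use of a separation-lemma / Beurling-hittability argument to control condition (ii) of the cut time is an unnecessary detour: since both the past and future at a cut time are pieces of a \emph{single} SRW (not a LERW vs.\ an independent SRW), the paper simply invokes Lawler's existence result for local cut times of 3D SRW (\eqref{1003-1-1} and \cite[Corollary 5.2]{Lawcut}), which gives a uniform constant directly and does not introduce any factor of ${\rm Es}(q\,2^n)$. The concern you raise about the escape probability entering here does not in fact arise. Your handling of the first bound, $P(A_0 \cap D_0(C_*)) \ge c_*$, is essentially correct (the paper uses $E({\rm len}(\xi_0)) \asymp 2^{-\beta m}2^{\beta n}$ plus Markov, while you invoke the sharper \eqref{exp-tail'}; either works).
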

Note that if we assume $S (0) = x \in G (a_{i} )$, we must have $t_{S} (a_{i} ) = 0$.

Before giving the proof of this lemma, we make a quick detour to recall a result of Lawler on the existence of local cut points of 3D SRW. 
\begin{dfn}\label{localnice}
We say $k$ is a {\bf local} nice cut time or $S$ has a local nice cut point in $Q_{1}^{m}$ if the following four conditions (i') - (iv') are fulfilled:
\begin{itemize}
\item[(i')] $t_{S} \big( a_{1} + \frac{q}{2} \big) \le k \le t_{S} ( a_{1} + q )$,

\item[(ii')] $S \big[ t_{S} (a_{1} ), k \big] \cap S \big[ k+1, t_{S} ( a_{1 } + 2q ) ] = \emptyset $ and $S [t_{S} ( a_{1} ), k] \cap H (a_{1} - q ) = \emptyset$,

\item[(iii')] $S (k) \in H \big[ a_{1} + \frac{q}{2}, a_{1} + q \big] $,

\item[(iv')] $ S [k, t_{S} ( a_{1} + 2q ) ] \cap H (a_{1} ) = \emptyset $ and $S [t_{S} ( a_{1} ), t_{S} ( a_{1} + 2q ) ]  \subset B \Big( S \big( t_{S} (a_{1} ) \big), 4q  \Big) $.

\end{itemize}
\end{dfn}
Compare this with a nice cut time as defined in Definition \ref{special-cut}. The difference between them comes from the second and fourth conditions. We write 
$$
B_{1}':=\{\mbox{$S$ has a local nice cut point in $Q_{1}^{m}$}\}.
$$
Note that if $S$ has a local cut point in $Q_{1}^{m}$ and $S \big[ t_{S} (a_{1} + 2q), t_{S} (a_{2} ) \big]$ is contained in $H ( a_{1} + q, a_{2} ]$, then $S$ has a nice cut point in $Q_{1}^{m}$.

By adapting the argument of \cite[Corollary 5.2]{Lawcut}, in which an essentially similar result is proved, it is possible to check that the probability $S$ has a local nice cut point in $Q_{1}^{m}$ is uniformly bounded from below. More precisely, 
\begin{equation}\label{1003-1-1}
P^{x} ( B_{1}' ) \ge c_{3}'  \ \ \ \ \text{uniformly in $x \in G (a_{1} ),$}
\end{equation}
for some universal constant $0< c_{3}' < \infty$.

\begin{proof}

The first assertion of \eqref{0902-d-2} is proved as follows. It follows from easy estimates on harmonic measures (to do this, one can for example further require that $S(t_{S}(a_1-q))\in G(a_1-q)$ and then apply strong Markov property) that $P (A_{0} ) \ge c' > 0$. Moreover, we already know (see \cite[Theorem 8.4]{S} and \cite[Corollary 1.3]{Escape}) that $E \big( {\rm len} ( \xi_{0} ) \big) \asymp 2^{- \beta m} 2^{\beta n} $. Thus, taking $C_{\ast}$ sufficiently large and using Markov's inequality, we have $P \big[ \big( D_{0} (C_{\ast} ) \big)^{c} \big] \le \frac{c'}{2}$. This implies the first assertion of \eqref{0902-d-2} with $c_{\ast} = \frac{c'}{2}$.

We will next consider the second assertion of \eqref{0902-d-2}. Thanks to translation invariance, it suffices to consider the case that $i=1$ for the second assertion of \eqref{0902-d-2}.
Take $x \in G (a_{1} )$. We first estimate the probability of $A_{1}$. 
Using the gambler's ruin estimate (see \eqref{Gambler} for this), it holds that 
\begin{equation*}
c_{1} 2^{-m_{0}} \le  P^{x} (A_{1} ) \le c_{2} 2^{-m_{0} }  \ \ \ \ \text{uniformly in $x \in G (a_{1} ),$} 
\end{equation*}
for some universal constants $0 < c_{1} , c_{2} < \infty$. Thus, in order to prove the second assertion of \eqref{0902-d-2}, it suffices to show that 
\begin{equation}\label{0902-e-1}
P^{x} \big( B_{1} \cap D_{1} (C) \ \big| \ A_{1} \big) \ge c  \ \ \ \ \text{uniformly in $x \in G (a_{1} ),$}
\end{equation}
for some universal constants $0< c, C < \infty$.

 However, using the strong Markov property at time $t_{S} (a_{1} + 2q)$, the gambler's ruin estimate (see \eqref{Gambler} for this), properties of local nice cut time and cut points (see Definition \ref{localnice}), and \eqref{1003-1-1}, it follows that  
\begin{align*}
 &\quad \; \, P^{x} (A_{1} \cap B_{1} ) \ge  P^{x} \Big( A_{1}, \ B_{1}', \  S \big[ t_{S} (a_{1} + 2q), t_{S} (a_{2} ) \big] \subset H ( a_{1} + q, a_{2} ] \Big) \\
 &\ge P^{x} (B_{1}') \min_{z \in H (a_{1} + 2 q) \cap B (x, 4q ) } P^{z} \Big( t_{S} (a_{2} ) < \infty, \ S (t_{S} (a_{2}) ) \in G (a_{2} ), \\ 
 & \quad \quad \  \ \ \ \ \ \ \ \ \ \ \ \ \ \ \ \ \ \ \ \ \ \ \ \ \   S [0, t_{S} (a_{2} ) ]  \subset H ( a_{1} + q, a_{2} ],\   S[ t_{S} (a_{2} -q), t_{S} (a_{2} ) ] \subset H [a_{2}- 2q, a_{2} ] \Big) \\
 & \ge c_{3} P^{x} (A_{1} )  \  \ \ \ \ \ \ \ \text{uniformly in $x \in G (a_{1} ),$}
\end{align*}
for some universal constant $0< c_{3} < \infty$. Here we used the gambler's ruin estimate in the last inequality above to show that 
\begin{equation*}
P^{z} \Big( S [0, t_{S} (a_{2} ) ]  \subset H ( a_{1} + q, a_{2} ] \Big) \ge c 2^{-m_{0}} 
\end{equation*}
uniformly in $z \in H (a_{1} + 2 q) \cap B (x, 4q )$  and $x \in G (a_{1} )$.
Thus, we have 
\begin{equation*}
P^{x} \big( B_{1}  \ \big| \ A_{1} \big) \ge c_{3}  \ \ 
\end{equation*}
uniformly in $x \in G (a_{1} )$.

 The proof of the lemma is completed once we show that choosing the constant $C_{1} < \infty$ appropriately, 
\begin{equation}\label{0902-f-1}
 P^{x} \big(  \big( D_{1} (C_{1}) \big)^{c} \ \big| \ A_{1} \big) \le \frac{c_{3}}{2}  \ \ \ \ \text{uniformly in $x \in G (a_{1} ).$}
\end{equation}
To prove this, we consider a conditioned random walk $Y$ in $2^{-n} \mathbb{Z}^{3}$ on the event $A_{1}$, which starts from $x \in G (a_{1} )$. We denote its law and respective expectation by $P^x$ and $E^x$ respectively. Let $ \iota = \text{LE} \big( Y \big[ 0, t_{Y} (a_{2}) \big] \big) $. We will give an upper bound on $E^{x} \big( {\rm len} (\iota) \big)$. To do it, let (recall the definition of $q$ in \eqref{eq:qdef})
\begin{equation*}
d_{y} = |x- y| \wedge \text{dist} \big( y,  \partial H [a_{1} -q, a_{2} ] \big) \ \ \text{ for } \ y \in H [a_{1} -q, a_{2} ].
\end{equation*}
Then Proposition \ref{prop:npoint2} ensures that 
\begin{equation*}
P^{x} ( y \in \iota ) \le C G_{H [a_{1} -q, a_{2} ]} \big( x, y; Y \big) \text{Es} \big( 2^{n} d_{y} \big) \ \ \text{ for } \ y \in H [a_{1} -q, a_{2} ],
\end{equation*}
where 
\begin{equation*}
G_{H [a_{1} -q, a_{2} ]} \big( x, y; Y \big) = E^{x} \bigg( \sum_{j=0}^{t_{Y} (a_{2})} {\bf 1} \big\{ y \in Y (j) \big\} \bigg) 
\end{equation*}
stands for the Green's function of $Y$ in $H [a_{1} -q, a_{2} ] $ and $\text{Es} (\cdot )$ is as defined in Section \ref{ONE}. 
Note that by \eqref{ES} we have $\text{Es} (r) \asymp r^{- (2- \beta )}$. Thus, we need to give a bound on the Green's function of $Y$. To do it, we set $H^{a} := \big\{ (a, y^{2}, y^{3} ) \in H (a) \ \big| \  ( | y^{2} | - 2^{-m} ) \, (|y^{3}| - 2^{-m} ) = 0 \big\}$ for the union of four edges of the square $H (a)$, and let
\begin{equation}
l_{y} = \text{dist} (y, H^{y^{1}} )
\end{equation}
for $y = (y^{1}, y^{2}, y^{3} ) \in H [a_{1} -q, a_{2} ]$. We first consider the following two cases for the location of $y$.

\vspace{1mm}

$\bullet$ \underline{{\bf Case 1:}} $ a_{1} \le y^{1} \le a_{1} + 2^{-m}$ and $0 \le l_{y} \le 2^{-m -3}$.

\vspace{1mm}

\hspace{-6.5mm} In this case, using \eqref{srwbound}, \eqref{srwbound-2} and the gambler's ruin estimate (see \eqref{Gambler} for this) we have 
\begin{equation*}
G_{H [a_{1} -q, a_{2} ]} \big( x, y; Y \big) \le C 2^{2 m} 2^{-n} l_{y} \ \ \text{ and }  \  \ \text{Es} \big( 2^{n} d_{y} \big) 
= \text{Es} \big( 2^{n} l_{y} \big) \le C \big( 2^{n} l_{y} \big)^{- ( 2 -\beta )}.
\end{equation*}

\vspace{1mm}

$\bullet$ \underline{{\bf Case 2:}} $ a_{1} \le y^{1} \le a_{1} + 2^{-m}$ and $2^{-m -3} \le  l_{y} \le 2^{-m }$.

\vspace{1mm}

\hspace{-6.3mm} In this case, using \eqref{srwbound}, \eqref{srwbound-2} and the gambler's ruin estimate again we have 
\begin{equation*}
G_{H [a_{1} -q, a_{2} ]} \big( x, y; Y \big) \le C \big( y^{1} + 2^{-m} - l_{y} \big) 2^{-n}  \ \ \text{ and }   \ \  \text{Es} \big( 2^{n} d_{y} \big) \le C \big\{  \big( y^{1} + 2^{-m} - l_{y} \big) 2^{n} \big\}^{- (2 - \beta ) }.
\end{equation*}

Combining these estimates, it holds that 
\begin{equation*}
\sum_{y \in H \big[ a_{1}, a_{1} + 2^{-m} \big]} P ( y \in \iota ) \le C 2^{- \beta m} 2^{\beta n}.
\end{equation*}
A similar estimate shows that 
\begin{equation*}
\sum_{y \in H \big[ a_{1} - q, a_{1} \big]} P ( y \in \iota ) \le C 2^{- \beta m} 2^{\beta n} \ \ \text{ and }  \ \  \sum_{y \in H \big[a_{1} + 2^{-m}, a_{2} \big]} P ( y \in \iota ) \le C 2^{- \beta m} 2^{\beta n}.
\end{equation*}
Thus, we conclude that 
\begin{equation*}
E^{x}  \big( {\rm len} (\iota) \big) \le C 2^{- \beta m} 2^{\beta n} \ \ \text{ uniformly in } x \in G (a_{1} )
\end{equation*}
for some universal constant $C < \infty$.
The inequality \eqref{0902-f-1} then follows from Markov's inequality. This finishes the proof of the lemma.
\end{proof}

Recall that $T_{r} := T^{S}_{r} = \inf \{ j \ge 0 \ | \  |S (j) | \ge r \}$ stands for the first exit time from $B (r)$.  Write 
\begin{equation}\label{0908}
\gamma_{n, m} = \text{LE} \Big( S \big[ 0, T_{40 \cdot m_{0} \,  2^{-m}} \big] \Big).
\end{equation}
Note that $H \big[ a_{0}, a_{2 m_{0}} \big] \subset B \big( 10 \cdot m_{0} \,  2^{-m} \big) $ by our construction.
Take $w = (w^{1}, w^{2}, w^{3}) \in   B \big( 10 \cdot m_{0} \,  2^{-m} \big) $. We write 
\begin{equation}\label{hw}
H_{w} = \big\{ y = (y^{1}, y^{2}, y^{3} ) \ \big| \ |y^{1} - w^{1} | \le q,  \ |y^{j} - w^{j} | \le 2^{-m} \ \text{ for } j=2,3 \big\}
\end{equation}
for a ``thin'' cuboid centered at $w$, and set
\begin{equation*}
N_{w} = \Big| H_{w} \cap \gamma_{n, m} \Big|,
\end{equation*}
where we recall that  $|A|$ stands for the cardinality of $A \cap 2^{-n} \mathbb{Z}^{3}$. The next lemma gives an upper bound on $N_{w}$ uniformly in $w \in  B \big( 10  m_{0}  \cdot  2^{-m} \big)$ with $\text{dist} (0, H_{w} ) \ge 2^{-m -1}$.

\begin{lem}\label{lem3-critical}
There exist universal constants $0 < c, C < \infty$ such that 
\begin{equation}\label{0903-a-1'}
P \big( N_{w} \ge 2^{-\beta m} 2^{\beta n} \big) \le C \exp \big\{ - c  \, 2^{  ( \beta - 1) m_{0}} \big\} 
\end{equation}
uniformly in $ w \in  B \big( 10 \cdot m_{0} \,  2^{-m} \big)$  with $\text{dist} \, (0, H_{w} ) \ge 2^{-m -1}$.
\end{lem}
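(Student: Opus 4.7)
The plan is to estimate the first moment of $N_w$ via the one-point function, control higher moments via the multi-point function estimate of Proposition~\ref{prop:npoint1}, and then conclude by a Chernoff-type inequality.

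First I would estimate $E[N_w]$ by applying the $k=1$ case of Proposition~\ref{prop:npoint1} to $\gamma_{n,m}$ in $D := B^{(n)}(40 m_0 \cdot 2^{-m})$: for $y \in H_w$,
\[
P(y \in \gamma_{n,m}) \le C\, G_D(0,y)\,\Es\!\big(2^n d_y\big), \qquad d_y = |y| \wedge {\rm dist}(y,\partial D).
\]
Under the assumption ${\rm dist}(0, H_w) \ge 2^{-m-1}$, we have $|y| \gtrsim 2^{-m}$ and ${\rm dist}(y,\partial D) \gtrsim m_0 \cdot 2^{-m}$, so $G_D(0,y) \asymp 2^{-n}/|y|$ and $\Es(2^n d_y) \asymp (2^n|y|)^{-(2-\beta)}$. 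Summing over the $\asymp q\cdot 2^{-2m}\cdot 2^{3n} = 2^{3n-3m-m_0}$ lattice points in $H_w$ yields
\[
E[N_w] \le C \cdot 2^{-m_0} \cdot 2^{-\beta m} \cdot 2^{\beta n}.
\]

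Then I would show that $E[N_w^k] \le k!\,(C_0 E[N_w])^k$ for every $k \ge 1$ and some universal $C_0$. By Proposition~\ref{prop:npoint1}, relabelling the summation variables according to any $\pi\in\Pi_k$ (a bijection on $H_w^k$) shows each of the $k!$ permutations contributes the same sum, so $E[N_w^k] \le C^k k! \cdot T_k$ with
\[
T_k := \sum_{(y_1,\ldots,y_k)\in H_w^k} \prod_{i=1}^k G_D(y_{i-1},y_i)\,\Es(d_i),\quad y_0=0.
\]
Since $\Es(r) \asymp r^{-(2-\beta)}$, one has $\Es(\min(a,b,c)) \le C(\Es(a)+\Es(b)+\Es(c))$. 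Expanding $\prod_i \Es(d_i)$ via this inequality produces $\le 3^k$ terms, each of which can be summed inductively from the last index inward using the basic one-point estimate
\[
\sum_{z\in H_w} G_D(z_0,z)\,\Es(2^n|z-z_0|) \le C\,E[N_w], \qquad z_0 \in \{0\}\cup H_w.
\]
This yields $T_k \le (CE[N_w])^k$ and hence the desired moment bound.

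Finally, with $M := E[N_w]$ and $t := 1/(2C_0 M)$, the moment bound gives $E[e^{tN_w}] \le \sum_{k\ge 0}(tC_0 M)^k \le 2$. Taking $a := 2^{-\beta m} 2^{\beta n}$ and using $a/M \ge c\cdot 2^{m_0}$ from the first-moment bound, we obtain
\[
P(N_w \ge a) \le 2 e^{-ta} \le 2\exp\{-c\cdot 2^{m_0}\},
\]
which is actually stronger than the claimed $C\exp\{-c\cdot 2^{(\beta-1)m_0}\}$ since $\beta>1$. The technically delicate part will be the basic one-point sum above in the case $z_0 \in H_w$: because $H_w$ is a slab of thickness $q = 2^{-m-m_0} \ll 2^{-m}$, one must split the sum into the regimes $|z-z_0|\le q$ (where $H_w$ locally resembles a 3D ball of radius $q$) and $q \le |z-z_0|\lesssim 2^{-m}$ (where $H_w$ resembles a 2D disk of radius $2^{-m}$ with thickness $q$), and verify that both contributions are comparable to $E[N_w]$—the factor $2^{-m_0}$ encoding the slab's thinness.
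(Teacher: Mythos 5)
Your overall strategy---bound $E[N_w]$ via the one-point function, bound $E[N_w^k]$ via the multi-point estimate, and conclude by a Chernoff-type bound---is precisely the paper's approach, and your first-moment computation is correct. However, the moment bound you claim, $E[N_w^k]\le k!\,(C_0\,E[N_w])^k$, is \emph{false} (at least for $\beta\le 3/2$, which cannot be excluded since we only know $\beta\in(1,5/3]$), and the final estimate $\exp\{-c\,2^{m_0}\}$ is not obtainable by this method; the lemma's weaker bound $\exp\{-c\,2^{(\beta-1)m_0}\}$ is what actually comes out.

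The obstruction is not the one-point sum $\sum_{z\in H_w}G_D(z_0,z)\,\Es(2^n|z-z_0|)$ with $z_0\in H_w$, which you correctly flag as ``delicate'' and which indeed is $\asymp E[N_w]$. The real issue is that when you expand $\prod_i\Es(d_i)$ using $\Es(\min)\le\sum\Es(\cdot)$ and sum inward from the last index $z_\vartheta$, some of the $3^k$ terms contain $\Es\big(2^n|z_\vartheta-z_{\vartheta-1}|\big)^{2}$ --- once from $d_\vartheta$ and once from the choice $|z_{\vartheta-1}-z_{\vartheta}|$ inside $d_{\vartheta-1}$ --- so the innermost sum is $\sum_{z_\vartheta\in H_w}G_D(z_{\vartheta-1},z_\vartheta)\,\Es(2^n|z_\vartheta-z_{\vartheta-1}|)^{2}$, which is \emph{not} the basic one-point sum. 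Because $\Es(r)\asymp r^{-(2-\beta)}$, squaring pushes the exponent from $-(2-\beta)$ to $-(4-2\beta)$, and for $\beta\le 3/2$ the ``outer'' shells $q\le|z-z_{\vartheta-1}|\lesssim 2^{-m}$ of the slab contribute an extra factor of order $m_0\,2^{(2-\beta)m_0}$ relative to $E[N_w]$ (with a borderline logarithmic case at $\beta=3/2$). After re-inserting a factor $\Es(2^n|z_{\vartheta-1}-z_{\vartheta-2}|)$ so that the induction can continue, the per-step constant is not $E[N_w]\asymp 2^{-m_0}2^{-\beta m}2^{\beta n}$ but rather $2^{-(\beta-1)m_0}2^{-\beta m}2^{\beta n}$ (using $\beta\in(1,5/3]$ to absorb the $m_0$ factor). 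This yields $E[N_w^\vartheta]\le C^\vartheta\vartheta!\,\big(2^{-(\beta-1)m_0}2^{-\beta m}2^{\beta n}\big)^\vartheta$ and hence, via the exponential Markov inequality, $P(N_w\ge 2^{-\beta m}2^{\beta n})\le C\exp\{-c\,2^{(\beta-1)m_0}\}$---exactly the statement, and no more. You should therefore confront the squared escape-probability term explicitly rather than treating it as a one-point sum, and you should retract the claim that your method gives a strictly stronger bound.
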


\begin{proof}

Take $w \in  B \big( 10  m_{0}  \cdot  2^{-m} \big)$ with $\text{dist} (0, H_{w} ) \ge 2^{-m -1}$. For simplicity, we write $B := B ( 40 \cdot m_{0} 2^{-m} )$ throughout the proof of the lemma.  We apply Proposition \ref{prop:npoint1} to show that for $x \in H_{w}$
\begin{equation*}
P (x \in \gamma_{n,m} ) \le C G_{B } (0, x; S ) \, \text{Es} ( 2^{n} |x| ),
\end{equation*} 
where $G_{A} (z, w ; S)$ stands for the Green's function of $S$ in $A$ and $\text{Es} (\cdot )$ is as defined in Section \ref{ONE}. 
Since $2^{-m-1} \le |x| \le 20 \cdot m_{0} 2^{-m}$, we can use \eqref{srwbound-2} to obtain that $  G_{B } (0, x; S ) \le C (2^{n} |x|)^{-1} \le C 2^{m} 2^{-n} $. Furthermore, using $\text{Es} (r) \asymp r^{- (2- \beta)}$ (see  \eqref{ES} for this), it follows that 
the probability of $x$ lying on $\gamma_{n,m}$ is bounded above by $C \big( 2^{n} 2^{-m} \big)^{-3 + \beta}$. Note that $|H_{w} | \le C 2^{-m_{0} } 2^{- 3 m} 2^{3 n} $. Thus, we have 
\begin{align*}
 &E (N_{w}) = \sum_{x \in H_{w}} P (x \in \gamma_{n,m} ) \\
 & \le C 2^{-m_{0}} 2^{-\beta m} 2^{\beta n}  \text{ uniformly in $w \in  B \big( 10 \cdot m_{0} \,  2^{-m} \big)$ with $\text{dist} \, (0, H_{w} )   \ge 2^{-m -1}$,}
 \end{align*}
for some universal constant $C \in (0, \infty)$.

We also need a similar bound on $E (N_{0} )$. To make it, we consider a collection of cubes $\{ D_{j} \}_{j=0}^{r}$ of side length $2q$  for which the following conditions are satisfied.

\begin{itemize}
\item $D_{j} = \{ x \in \mathbb{R}^{3} \ | \ \| x- x_{j} \|_{\infty} \le q \}$ for $0 \le j \le r$ and $x_{0} =0$.

\item $\| x_{i} - x_{j} \|_{\infty} \ge \frac{3q }{2} $ if $i \neq j$.

\item $H_{0} \subset \bigcup_{j=0}^{r} D_{j}$ and $r \asymp \Big( \frac{2^{-m}}{q} \Big)^{2} = 2^{2 m_{0}}$.

\end{itemize}
Using Proposition \ref{prop:npoint1} again, we have that  
\begin{equation*}
E \Big( \big| D_{0} \cap \gamma_{n,m} \big| \Big) = \sum_{x \in D_{0} } P ( x \in \gamma_{n,m} ) \le C \sum_{x \in D_{0} } G_{B} (0, x; S) \text{Es} ( 2^{n} |x| ) \asymp \sum_{l = 1}^{q 2^{n}} l^{-1 + \beta} \asymp (q 2^{n} )^{\beta},
\end{equation*}
where we used the fact that $\text{Es} (r) \asymp r^{- (2- \beta)}$ (see  \eqref{ES} for this) in the third equation. 

Take $j \neq 0$. Note that $\| x_{j} \|_{\infty} \ge \frac{3 q}{2}$. Thus, we can choose an integer $k \ge 1$ such that $kq \le \| x_{j} \|_{\infty} < (k + 1) q $. It then follows that 
\begin{equation*}
E \Big( \big| D_{j} \cap \gamma_{n,m} \big| \Big) = \sum_{x \in D_{j} } P ( x \in \gamma_{n,m} ) \le C \sum_{x \in D_{j} } G_{B} (0, x; S) \text{Es} ( 2^{n} |x| ) \asymp k^{-3 + \beta} (q 2^{n} )^{\beta},
\end{equation*}
where we used the fact that $G_{B} (0, x; S) \text{Es} ( 2^{n} |x| ) \asymp (k q 2^{n} )^{-3 + \beta}$ for $x \in D_{j}$ in the last equation. Since the number of $j \in \{ 0, 1, \cdots , r \}$ satisfying $kq \le \| x_{j} \|_{\infty} < (k + 1) q $ is comparable to $k$, 
we conclude that 
\begin{equation}\label{osaka}
E (N_{0} ) \le \sum_{j=0}^{r} E \Big( \big| D_{j} \cap \gamma_{n,m} \big| \Big) \le C (q 2^{n} )^{\beta} + C \sum_{k=1}^{2^{m_{0}}} k \cdot  
k^{-3 + \beta} (q 2^{n} )^{\beta} \le C 2^{-m_{0}} 2^{- \beta m} 2^{\beta n}.
\end{equation}

Take an integer $\vartheta \ge 2$. We will next give an upper bound on $E ( N_{w}^{\vartheta} )$ for $w \in  B \big( 10  m_{0}  \cdot  2^{-m} \big)$ with $\text{dist} (0, H_{w} ) \ge 2^{-m -1}$.  We use Proposition \ref{prop:npoint1} (see also the sixth line of the proof of \cite[Theorem 8.4]{S}) to show that
\begin{equation*}
E (N_{w}^{\vartheta} ) \le C^{\vartheta} \vartheta! \sum_{z_{1} \in H_{w}} \sum_{z_{2} \in H_{w}} \cdots \sum_{z_{\vartheta} \in H_{w}} \prod_{i=1}^{\vartheta} G_{B} (z_{i-1}, z_{i}; S) \text{Es} ( 2^{n} d_{i} ) 
\end{equation*}
for some universal constant $C \in (0, \infty)$,  where we set $z_{0} = 0$ and define $d_{i}$ by 
\begin{equation*}
d_{i} = |z_{i} -z_{i-1} | \wedge |z_{i} - z_{i+1} | \text{ \  for \  $1 \le i \le \vartheta-1$ \ \ \  and \ \ \  $d_{\vartheta} = |z_{\vartheta} - z_{\vartheta-1}|$.}
\end{equation*}

Write 
\begin{equation*}
L_{\vartheta} = \sum_{z_{1} \in H_{w}} \sum_{z_{2} \in H_{w}} \cdots \sum_{z_{\vartheta} \in H_{w}} \prod_{i=1}^{\vartheta} G_{B} (z_{i-1}, z_{i}; S) \text{Es} ( 2^{n} d_{i} ).
\end{equation*}
We will first deal with the terms containing $z_{\vartheta}$. Since $$\text{Es} (2^{n} d_{\vartheta-1} ) \le \text{Es} (2^{n} | z_{\vartheta-1} - z_{\vartheta-2} | ) + \text{Es} ( 2^{n} |z_{\vartheta-1} - z_{\vartheta} | ),$$ we can decompose $L_{\vartheta}$ as follows. 
\begin{align}\label{lq}
L_{\vartheta} &=  \sum_{z_{1} \in H_{w}} \sum_{z_{2} \in H_{w}} \cdots \sum_{z_{\vartheta-1} \in H_{w}} \prod_{i=1}^{\vartheta-2} G_{B} (z_{i-1}, z_{i}; S) \text{Es} ( 2^{n} d_{i} ) \cdot G_{B} (z_{\vartheta-2}, z_{\vartheta-1}; S) \notag  \\
&  \ \ \ \ \ \ \ \ \ \ \ \ \ \ \ \ \ \ \ \ \ \ \ \ \ \ \ \ \ \  \ \ \ \ \ \ \ \ \ \   \times \sum_{z_{\vartheta} \in H_{w} } G_{B} (z_{\vartheta-1}, z_{\vartheta}; S) \text{Es} (2^{n} d_{\vartheta} ) \text{Es} (2^{n} d_{\vartheta-1} ) \notag \\
&\le  \sum_{z_{1} \in H_{w}} \sum_{z_{2} \in H_{w}} \cdots \sum_{z_{\vartheta-1} \in H_{w}} \prod_{i=1}^{\vartheta-2} G_{B} (z_{i-1}, z_{i}; S) \text{Es} ( 2^{n} d_{i} ) \cdot G_{B} (z_{\vartheta-2}, z_{\vartheta-1}; S) \notag  \\
&  \ \ \ \ \ \ \ \ \  \ \ \ \   \times \sum_{z_{\vartheta} \in H_{w} } G_{B} (z_{\vartheta-1}, z_{\vartheta}; S) \text{Es} (2^{n} d_{\vartheta} ) \big\{ \text{Es} (2^{n} | z_{\vartheta-1} - z_{\vartheta-2} | ) + \text{Es} ( 2^{n} |z_{\vartheta-1} - z_{\vartheta} | ) \big\} \notag \\
&= \sum_{z_{1} \in H_{w}} \sum_{z_{2} \in H_{w}} \cdots \sum_{z_{\vartheta-1} \in H_{w}} \prod_{i=1}^{\vartheta-2} G_{B} (z_{i-1}, z_{i}; S) \text{Es} ( 2^{n} d_{i} ) \cdot G_{B} (z_{\vartheta-2}, z_{\vartheta-1}; S)   \notag   \\
&  \ \ \ \ \ \ \ \ \ \ \ \ \  \ \ \ \   \times \text{Es} (2^{n} | z_{\vartheta-1} - z_{\vartheta-2} | )\sum_{z_{\vartheta} \in H_{w} } G_{B} (z_{\vartheta-1}, z_{\vartheta}; S) \text{Es} (2^{n} d_{\vartheta} )  \notag   \\
&+ \sum_{z_{1} \in H_{w}} \sum_{z_{2} \in H_{w}} \cdots \sum_{z_{\vartheta-1} \in H_{w}} \prod_{i=1}^{\vartheta-2} G_{B} (z_{i-1}, z_{i}; S) \text{Es} ( 2^{n} d_{i} ) \cdot G_{B} (z_{\vartheta-2}, z_{\vartheta-1}; S) \notag  \\
&  \ \ \ \ \ \ \ \ \ \ \ \ \ \ \ \ \ \ \ \ \ \ \ \ \ \ \ \ \ \  \ \ \ \   \times \sum_{z_{\vartheta} \in H_{w} } G_{B} (z_{\vartheta-1}, z_{\vartheta}; S) \text{Es} (2^{n} d_{\vartheta} )^{2}  =: L_{\vartheta, 1} + L_{\vartheta,2}.
\end{align}

For each $z= (z^{1}, z^{2}, z^{3})$, we write 
\begin{equation*}
H'_{z} = \big\{ y = (y^{1}, y^{2}, y^{3} ) \ \big| \ |y^{1} - w^{1} | \le 2q,  \ |y^{j} - w^{j} | \le 2^{-m+1} \ \text{ for } j=2,3 \big\},
\end{equation*}
which is an enlargement of $H_{z}$, see \eqref{hw}.
Notice that if $z_{\vartheta-1} \in H_{w}$ then $H_{w} \subset H_{z_{\vartheta-1}}'$. Thus, by the translation invariance, a similar calculation as in \eqref{osaka} shows that 
\begin{align*}
& \sum_{z_{\vartheta} \in H_{w} } G_{B} (z_{\vartheta-1}, z_{\vartheta}; S) \text{Es} (2^{n} d_{\vartheta} )  \le \sum_{z_{\vartheta} \in H_{z_{\vartheta-1}}'} G_{B} (z_{\vartheta-1}, z_{\vartheta}; S) \text{Es} (2^{n} d_{\vartheta} ) \\
 &= \sum_{z \in H_{0}'} G_{B} (0, z; S) \text{Es} (2^{n} |z| ) \le C 2^{-m_{0}} 2^{- \beta m} 2^{\beta n}.
 \end{align*}
This implies that $L_{\vartheta,1}$ defined as in \eqref{lq} is bounded above by
\begin{align}\label{osaka-3}
 C 2^{-m_{0} - \beta m + \beta n} \sum_{z_{1} \in H_{w}}  &\cdots \sum_{z_{\vartheta-1} \in H_{w}}\\
 & \prod_{i=1}^{\vartheta-2} G_{B} (z_{i-1}, z_{i}; S) \text{Es} ( 2^{n} d_{i} ) \,  G_{B} (z_{\vartheta-2}, z_{\vartheta-1}; S)  \text{Es} (2^{n} | z_{\vartheta-1} - z_{\vartheta-2} | ).
\end{align}

We will consider $L_{\vartheta,2}$. For each $z_{\vartheta-1} \in H_{w}$, we have 
\begin{align*}
 \sum_{z_{\vartheta} \in H_{w} } G_{B} (z_{\vartheta-1}, z_{\vartheta}; S) \text{Es} (2^{n} d_{\vartheta} )^{2} \le  \sum_{z_{\vartheta} \in H_{z_{\vartheta-1}}' } G_{B} (z_{\vartheta-1}, z_{\vartheta}; S) \text{Es} (2^{n} d_{\vartheta} )^{2}.
 \end{align*}
Using the translation invariance again, a similar argument used in \eqref{osaka} gives that
\begin{align}\label{osaka-2}
&\sum_{z_{\vartheta} \in H_{z_{\vartheta-1}}' } G_{B} (z_{\vartheta-1}, z_{\vartheta}; S) \text{Es} (2^{n} d_{\vartheta} )^{2} = \sum_{z \in H_{0}' } G_{B} (0, z; S) \text{Es} (2^{n} |z| )^{2} \notag \\
&\le \sum_{\| z \|_{\infty} \le 2q } G_{B} (0, z; S) \text{Es} (2^{n} |z| )^{2} + \sum_{z \in H_{0}', \, \| z \|_{\infty} \ge 2q } G_{B} (0, z; S) \text{Es} (2^{n} |z| )^{2} \notag  \\
&\le C \sum_{l=1}^{q 2^{n}} l^{-3 + 2 \beta} + C (q 2^{n} )^{-2 + 2 \beta} \sum_{k=1}^{2^{m_{0}}} k^{-4 + 2 \beta}.
\end{align}
Since $\beta > 1$, we see that the first term of \eqref{osaka-2} is bounded above by $C (q 2^{n} )^{-2 + 2 \beta}$. We need to be careful with the second term of \eqref{osaka-2} since we cannot exclude the case that $-4 + 2 \beta = -1$ (which is the case that $\beta = 3/2$). If $-4 + 2 \beta > -1$ the sum in the second term of \eqref{osaka-2} is smaller than $C (2^{m_{0}})^{-3 + 2 \beta } $, while it is bounded above by $C m_{0}$ if $-4 + 2 \beta \le -1$. Thus, we have 
\begin{equation*}
\text{\eqref{osaka-2}} \le C m_{0} (q 2^{n} )^{-2 + 2 \beta} + C ( q 2^{n} )^{-2 + 2 \beta} (2^{m_{0}})^{-3 + 2 \beta + \epsilon}.
\end{equation*}

We note that $(q 2^{n} )^{-2 + 2 \beta}$ is bounded above by 
\begin{equation*}
C 2^{- 2 ( \beta - 1) m_{0} } 2^{-\beta m} 2^{\beta n} \text{Es} ( 2^{n} 2^{-m} ) \le C 2^{- 2 ( \beta - 1) m_{0} } 2^{-\beta m} 2^{\beta n} \text{Es} ( 2^{n} | z_{\vartheta-1} - z_{\vartheta-2} |)
\end{equation*}
whenever $z_{\vartheta-2}, z_{\vartheta-1} \in H_{w}$ since $| z_{\vartheta-1} - z_{\vartheta-2} | \le 2^{-m +3}$. Furthermore, the quantity $(q 2^{n} )^{-2 + 2 \beta}  (2^{m_{0}})^{-3 + 2 \beta }$ is less than 
\begin{equation*}
C (2^{m_{0}})^{-3 + 2 \beta } 2^{- 2 ( \beta - 1) m_{0} } 2^{-\beta m} 2^{\beta n} \text{Es} ( 2^{n} 2^{-m} ) \le C 2^{-  m_{0} } 2^{-\beta m} 2^{\beta n} \text{Es} ( 2^{n} | z_{\vartheta-1} - z_{\vartheta-2} |)
\end{equation*}
uniformly in $z_{\vartheta-2}, z_{\vartheta-1} \in H_{w}$. Combined  with \eqref{osaka-3},This implies that 
\begin{align*}
 L_{\vartheta}=L_{\vartheta,1}+L_{\vartheta,2}<\; &C 2^{-  ( \beta - 1) m_{0}} 2^{- \beta m} 2^{\beta n}  \sum_{z_{1} \in H_{w}} \sum_{z_{2} \in H_{w}} \cdots \sum_{z_{\vartheta-1} \in H_{w}} \\ 
 &\quad\prod_{i=1}^{\vartheta-2} G_{B} (z_{i-1}, z_{i}; S) \text{Es} ( 2^{n} d_{i} ) \,  G_{B} (z_{\vartheta-2}, z_{\vartheta-1}; S)  \text{Es} (2^{n} | z_{\vartheta-1} - z_{\vartheta-2} | ),
\end{align*}
where we used the fact that $m_{0} 2^{- 2 ( \beta - 1) m_{0} } + 2^{-m_{0}} \le C 2^{-  ( \beta - 1) m_{0}}$ since $\beta \in (1, \frac{5}{3}]$.
Iterating this $\vartheta-1$ times, it follows that 
\begin{equation*}
E (N_{w}^{\vartheta} ) \le C^{\vartheta} \vartheta!  \big\{ 2^{-  ( \beta - 1) m_{0}} 2^{- \beta m} 2^{\beta n} \big\}^{\vartheta},
\end{equation*}
which implies that 
\begin{equation*}
E \bigg[ \exp \bigg\{ \frac{ c  N_{w} }{ 2^{-  (\beta -1 ) m_{0}} 2^{-\beta m} 2^{\beta n} } \bigg\} \bigg]  \le C
\end{equation*}
uniformly in $w \in  B \big( 10 \cdot m_{0} \,  2^{-m} \big)$  with $\text{dist} \, (0, H_{w} ) \ge 2^{-m -1}$, for some universal constant $0 < c, C < \infty$. By Markov's inequality, this implies \eqref{0903-a-1'}.
\end{proof}

Set 
\begin{equation}\label{b}
J_{i} = \bigcap_{k=0}^{i} D_{k}  (C_{\ast} ) 
\end{equation}
where the event $D_{k} (C)$  and the constant $C_{\ast}$ are as defined in \eqref{c} and \eqref{0902-d-2} respectively. We also define for $i\ge1$
\begin{align}\label{ab}
&w_{i} = ( a_{i}, 0, 0 ) = \big( 2^{-m} (2i-1), 0, 0 \big),  \ \ \  \ \ \ L_{i} = \Big\{ \big| H_{w_{l}} \cap \xi_{i} \big| \le 2^{- \beta m} 2^{\beta n} \ \text{ for all } 1 \le l \le \frac{i}{2} \Big\}  \notag \\
&\text{and} \ \   U_{ 2 m_{0}} = \Big\{ S \big( T_{40 \cdot m_{0} \,  2^{-m}} \big) \in \big\{ (y^{1}, y^{2}, y^{3} ) \in \mathbb{R}^{3} \ \big| \ y^{1} \ge 8 m_{0} \cdot 2^{-m} \big\},   \notag \\ 
& \  \ \ \ \  \ \ \  \ \    \ \  \ \ \  \ \ \  \ \ S \big[ t_{S} (a_{ 2 m_{0}+1} ) , T_{40 \cdot m_{0} \,  2^{-m}} \big] \cap B \big( a_{\frac{3}{2} \cdot  m_{0}}  \big)  = \emptyset \Big\},
\end{align}
see \eqref{hw} and \eqref{09070} for $H_{w}$ and $\xi_{i}$ (we recall that $|A|$ stands for the cardinality of $A \cap 2^{-n} \mathbb{Z}^{3}$).

We write
\begin{equation}\label{am}
A^{m} = F_{2 m_{0}} \cap G_{2 m_{0}} \cap J_{2 m_{0}} \cap U_{2m_{0}} \cap L_{2 m_{0}}
\end{equation}
for the intersection of all events which have been defined. Here we recall that $F_{i}$, $G_{i}$, $J_{i}$, $U_{2 m_{0}}$ and $L_{i}$ are as defined in \eqref{0907-b-1}, \eqref{0907-b-2}, \eqref{b} and \eqref{ab} respectively.

The next proposition then gives a lower bound of the probability of this event $A^m$.

\begin{prop}\label{lem-critical}
There exists a universal constant $c > 0$ such that 
\begin{equation}\label{o''}
P \Big( A^{m}  \Big) \ge c 2^{- 3 m_{0}^{2}},
\end{equation}
where $A^{m}$ is defined as in \eqref{am}.
\end{prop}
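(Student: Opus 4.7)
The plan is to combine the strong Markov property with Lemmas \ref{lem2-critical} and \ref{lem3-critical}, estimating $P(A^m)$ in three stages: (1) build up $F_{2m_0} \cap G_{2m_0} \cap J_{2m_0}$ via iterated strong Markov; (2) add $U_{2m_0}$ with a positive constant factor; (3) show $L_{2m_0}^c$ is a negligible correction.

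For stages (1) and (2), note that $A_0 \cap D_0$ is measurable with respect to $S[0, t_S(a_1)]$, each $A_i \cap B_i \cap D_i$ for $i \ge 1$ with respect to $S[t_S(a_i), t_S(a_{i+1})]$, and $U_{2m_0}$ with respect to $S[t_S(a_{2m_0+1}), T_{40 m_0 2^{-m}}]$; moreover $A_i$ forces $S(t_S(a_{i+1})) \in G(a_{i+1})$. A standard computation using the gambler's ruin bound \eqref{Gambler} together with harmonic-measure estimates gives $\min_{x \in G(a_{2m_0+1})} P^x(U_{2m_0}) \ge c_U > 0$. Combining this with Lemma \ref{lem2-critical} and iterating the strong Markov property at $t_S(a_1), \ldots, t_S(a_{2m_0+1})$ yields
\begin{equation*}
P\bigl(F_{2m_0} \cap G_{2m_0} \cap J_{2m_0} \cap U_{2m_0}\bigr) \ge c_U \cdot P(A_0 \cap D_0) \prod_{i=1}^{2m_0} \min_{x \in G(a_i)} P^x(A_i \cap B_i \cap D_i) \ge c_U c_*^{2m_0+1} \cdot 2^{-2 m_0^2}.
\end{equation*}

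For stage (3), since $L_{2m_0}$ is measurable with respect to $S[0, t_S(a_{2m_0+1})]$ and hence, given $S(t_S(a_{2m_0+1}))$, conditionally independent of $U_{2m_0}$, we have $P(A^m) \ge c_U \bigl[P(F_{2m_0} \cap G_{2m_0} \cap J_{2m_0}) - P(L_{2m_0}^c)\bigr]$. One then bounds $P(L_{2m_0}^c)$ by a union bound over $l = 1, \ldots, m_0$ and adapts the argument of Lemma \ref{lem3-critical} to $\xi_{2m_0}$: the multi-point moment estimates based on Proposition \ref{prop:npoint1} combined with an exponential Markov inequality yield, for each $1 \le l \le m_0$, $P\bigl(|H_{w_l} \cap \xi_{2m_0}| > 2^{-\beta m} 2^{\beta n}\bigr) \le C \exp\{-c \cdot 2^{(\beta-1)m_0}\}$. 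Since $\beta > 1$, this bound decays doubly exponentially in $m_0$ and is thus negligible compared to $2^{-2m_0^2}$ for large $m_0$, giving $P(A^m) \ge c \cdot 2^{-3 m_0^2}$ as desired.

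The principal technical point is the adaptation required in stage (3): Lemma \ref{lem3-critical} as stated concerns $\gamma_{n,m} = \LE(S[0, T_{40 m_0 2^{-m}}])$ (walk up to exiting a ball), whereas $L_{2m_0}$ involves $\xi_{2m_0} = \LE(S[0, t_S(a_{2m_0+1})])$ (walk up to hitting a hyperplane). Since loop-erasure is not monotone under truncation, the two loop-erasures may genuinely differ near the hyperplane, so a direct comparison between them is not immediate. Nonetheless, the first-moment bound $P(x \in \xi_{2m_0}) \le C \cdot G(0,x) \cdot \Es(2^n |x|)$ and its higher-moment analogues follow from the same general principles underlying Propositions \ref{prop:npoint1}--\ref{prop:npoint2}, so the concentration estimate behind Lemma \ref{lem3-critical} carries over essentially verbatim.
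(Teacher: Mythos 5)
Your stages (1) and (2) are essentially the paper's argument: strong Markov at the successive hyperplane-hitting times together with Lemma~\ref{lem2-critical} gives $P(F_{2m_0}\cap G_{2m_0}\cap J_{2m_0})\ge c_\ast^{2m_0}2^{-2m_0^2}$, and the conditional probability of $U_{2m_0}$ from $G(a_{2m_0+1})$ is bounded below by a constant. The decomposition $P(A^m)\ge c_U\big[P(F\cap G\cap J)-P(F\cap G\cap J\cap L_{2m_0}^c)\big]$ (note that the term you subtract must include the intersection with $F\cap G\cap J$ so that $t_S(a_{2m_0+1})<\infty$ and $\xi_{2m_0}$ is well defined --- for the 3D walk, the hitting time of the bounded square $H(a_{2m_0+1})$ is infinite with positive probability) is also fine.

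The gap is in stage (3). You recognize that Lemma~\ref{lem3-critical} is stated for $\gamma_{n,m}=\text{LE}\big(S[0,T_{40m_0 2^{-m}}]\big)$ whereas $L_{2m_0}$ concerns $\xi_{2m_0}=\text{LE}\big(S[0,t_S(a_{2m_0+1})]\big)$, but then assert that the multi-point estimate and concentration carry over to $\xi_{2m_0}$ ``essentially verbatim.'' That is not something you can wave through: Proposition~\ref{prop:npoint1} is stated for the LERW stopped at the exit time of a ball, whereas $t_S(a_{2m_0+1})$ is the hitting time of a bounded square, so the relevant domain is the unbounded set $2^{-n}\mathbb{Z}^3\setminus H(a_{2m_0+1})$. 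The Green's function estimates and the boundary-distance terms $d^{\pi_i}$ in \eqref{eq:npointbound} are tied to the geometry of the stopping domain, and re-deriving the $\vartheta$-th moment bound for $\xi_{2m_0}$ would be a nontrivial piece of new work, not a cosmetic change. Worse, non-monotonicity of loop-erasure means $|H_{w_l}\cap\xi_{2m_0}|$ can be \emph{larger} than $|H_{w_l}\cap\gamma_{n,m}|$, so there is no cheap comparison inequality to fall back on.

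What the paper actually does avoids this work entirely, and is the point you have missed. The event $U_{2m_0}$ is not just a technical add-on that costs a constant factor: its second clause forces $S\big[t_S(a_{2m_0+1}),T_{40m_0 2^{-m}}\big]$ to stay away from $B\big(a_{\frac{3}{2}m_0}\big)$, and the no-backtracking conditions in $F_{2m_0}$ force $\xi_{2m_0}$ after time $t_{\xi_{2m_0}}(a_{\frac{3}{2}m_0})$ to stay away from $B(a_{m_0+1})$. Together these give the deterministic identity $H_{w_l}\cap\xi_{2m_0}=H_{w_l}\cap\gamma_{n,m}$ for all $l\le m_0$ \emph{on the event} $F_{2m_0}\cap G_{2m_0}\cap J_{2m_0}\cap U_{2m_0}$. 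This lets one bound $P\big(F_{2m_0}\cap G_{2m_0}\cap J_{2m_0}\cap U_{2m_0}\cap L_{2m_0}^c\big)$ by a union bound against the \emph{already proved} Lemma~\ref{lem3-critical} for $\gamma_{n,m}$, with no need to extend the multi-point machinery to a new stopping rule. You should identify and use that reduction rather than promise an unverified adaptation of Lemma~\ref{lem3-critical}.
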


\begin{proof}
 Note that by the strong Markov property and \eqref{srwbound}, there exists a universal constant $c > 0$ such that 
\begin{equation}\label{0903}
P \big( U_{2 m_{0}} \ \big| \  F_{2 m_{0}} \cap G_{2 m_{0}} \cap J_{2 m_{0}} \big) \ge c.
\end{equation}
Furthermore, using \eqref{0902-d-2} and  the strong Markov property again, it holds that 
\begin{equation*}
P \big( F_{2 m_{0}} \cap G_{2 m_{0}} \cap J_{2 m_{0}} \big) \ge c_{\ast}^{2 m_{0}} 2^{- 2 m_{0}^{2}} \ge 2^{- 3 m_{0}^{2}},
\end{equation*}
where if necessary we change  $m_{0}$ so that the last inequality above holds. Combining this with \eqref{0903}, we have 
\begin{equation}\label{0903-b-1}
P \big( F_{2 m_{0}} \cap G_{2 m_{0}} \cap J_{2 m_{0}} \cap U_{2m_{0}} \big) \ge c 2^{- 3 m_{0}^{2}}.
\end{equation}

Suppose that the event $F_{2 m_{0}} \cap G_{2 m_{0}} \cap J_{2 m_{0}} \cap U_{2m_{0}} $ occurs. Since $$S \big[ t_{S} (a_{2 m_{0}+1} ) ,  T_{40 \cdot m_{0} \,  2^{-m}}  \big] \mbox{ does not hit }B \big( a_{\frac{3}{2} \cdot  m_{0}}  \big)$$ and $\xi_{2m_{0}}$ has no big backtracking in the sense that $$\xi_{2 m_{0}} \big[  t_{ \xi_{2 m_{0}} } \big( a_{\frac{3}{2} \cdot  m_{0}} \big), {\rm len} \big( \xi_{2 m_{0}} \big) \big]\mbox{ does not hit  } B \big( a_{m_{0} + 1} \big),$$  we see that 
\begin{align}\label{a}
&\xi_{2 m_{0}} \big[ 0, t_{ \xi_{2 m_{0}} } \big( a_{\frac{3}{2} \cdot  m_{0}} \big) \big] = \gamma_{n, m} \big[ 0, t_{ \gamma_{n,m} } \big(  
a_{\frac{3}{2} \cdot  m_{0}} \big) \big] \notag \\
& \text{and thus }  \   \ H_{w_{l}} \cap \xi_{2 m_{0}} = H_{w_{l}} \cap \gamma_{n, m} \ \  \text{ for all }  \ 1 \le l \le m_{0}.
\end{align}
Therefore, if the event $L_{2 m_{0}}$ does not occur, there exists $1 \le l \le m_{0}$ such that $\big| H_{w_{l}} \cap \gamma_{n, m} \big|$ is bounded below by $2^{- \beta m} 2^{\beta n}$. Since $w_{l} \in B ( 10 \cdot m_{0} 2^{-m} )$ and  $\text{dist} (0, H_{w_{l}} ) \ge 2^{-m-1}$, we may use \eqref{0903-a-1'} to show that 
\begin{align*}
&P \Big( F_{2 m_{0}} \cap G_{2 m_{0}} \cap J_{2 m_{0}} \cap U_{2m_{0}} \cap (L_{2 m_{0}} )^{c} \Big) \notag\\
\le\;& P \Big( 1 \le \exists \,  l \le m_{0} \text{ s.t. } \big| H_{w_{l}} \cap \gamma_{n, m} \big| \ge 2^{- \beta m} 2^{\beta n} \Big) \notag \\
 \le \;& C m_{0} \, \exp \big\{ - c  \, 2^{ (\beta -1) m_{0}} \big\}.
\end{align*}
Combining this with \eqref{0903-b-1} (and the fact that $\beta > 1$), we conclude that 
\begin{equation}\label{o'}
P \Big( F_{2 m_{0}} \cap G_{2 m_{0}} \cap J_{2 m_{0}} \cap U_{2m_{0}} \cap L_{2 m_{0}}  \Big) \ge c 2^{- 3 m_{0}^{2}},
\end{equation}
which finishes the proof.
\end{proof}

We will extract a necessary property of $\gamma_{n,m}$ from the event $A^{m}$ in the next lemma.

\begin{lem}\label{lem-2-critical}
On the event $A^{m}$ defined as in \eqref{am}, it holds that 
\begin{equation}\label{q'}
t_{\gamma_{n,m}} \big( a_{m_{0}} \big)  \le \widehat{C}  m_{0} \, 2^{- \beta m} \, 2^{ \beta n} \ \ \text{ and }  \  \ \Big| \gamma_{n,m} \Big( t_{\gamma_{n,m}} \big( a_{m_{0}} \big) \Big) \Big| \ge m_{0} 2^{- m},
\end{equation}
for some universal constant $\widehat{C} < \infty$. Here recall that $\gamma_{n,m}$ and $t_{\lambda} (a)$ were given as in \eqref{0908} and \eqref{t-lambda}.
\end{lem}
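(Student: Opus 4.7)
The plan is to reduce the time bound to a cardinality bound of $\xi_{2m_0}$ inside the left half $H[a_0, a_{m_0}]$ of the tube, and then to iterate the set-level decomposition from the proof of Lemma \ref{lem1-critical} restricted to that slab. Since $a_{m_0} < a_{3 m_0/2}$, the identity \eqref{a} immediately yields $\gamma_{n,m}[0, t_{\gamma_{n,m}}(a_{m_0})] = \xi_{2m_0}[0, t_{\xi_{2m_0}}(a_{m_0})]$, so it suffices to bound $t_{\xi_{2m_0}}(a_{m_0})$. On $A^m$ the walk $S[0, t_S(a_{2m_0+1})]$ is confined to the tube $H[a_0, a_{2m_0+1}]$ (each $A_i$ forces $S[t_S(a_i), t_S(a_{i+1})] \subset H(a_i - q, a_{i+1}]$), hence so is $\xi_{2m_0}$; since $\xi_{2m_0}$ is simple, the initial segment before it first reaches $H(a_{m_0})$ consists of distinct lattice sites whose first coordinate is at most $a_{m_0}$, giving
\[
t_{\xi_{2m_0}}(a_{m_0}) + 1 \le \big| \xi_{2m_0} \cap H[a_0, a_{m_0}] \big|.
\]

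Next I would revisit the proof of Lemma \ref{lem1-critical}: on $F_i \cap G_i$ the decomposition $\xi_i = \xi_{i-1}[0, s_i] \oplus \LE(S[t_i, k_i]) \oplus \LE(S[k_i, t_S(a_{i+1})])$ together with $S[t_i, k_i] \subset H[a_i - q, a_i + q]$ yields the set inclusion $\xi_i \subseteq \xi_{i-1} \cup \bigl( \xi_i \cap H(a_i - q, a_i + q] \bigr) \cup \lambda_i$. The same proof also supplies a ``stability'' inclusion $\xi_{i-1} \cap H(a_l - q, a_l + q] \subset \xi_i$ whenever $l < i$: by \eqref{0902-b-1} the tail $\xi_{i-1}[s_i, {\rm len}(\xi_{i-1})]$ lies in $H[a_i - 2q, a_i]$, which is disjoint from $H(a_l - q, a_l + q]$ as soon as $a_l + q < a_i - 2q$, a separation guaranteed by $m_0 \ge 10$. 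Iterating the set inclusion from $i = 1$ up to $i = 2m_0$ while using stability at every step to promote each slab intersection to one inside $\xi_{2m_0}$ gives
\[
\xi_{2m_0} \subseteq \xi_0 \cup \bigcup_{l=1}^{2m_0} \lambda_l \cup \bigcup_{l=1}^{2m_0} \bigl( \xi_{2m_0} \cap H(a_l - q, a_l + q] \bigr).
\]
Intersecting with $H[a_0, a_{m_0}]$ truncates both unions at $l = m_0$, because for $l \ge m_0 + 1$ one has $\lambda_l \subset H[a_l - q, a_{l+1}]$ and $H(a_l - q, a_l + q]$ both disjoint from $H[a_0, a_{m_0}]$ (again using $q = 2^{-m - m_0} \ll 2^{-m}$), producing
\[
\big| \xi_{2m_0} \cap H[a_0, a_{m_0}] \big| \le |\xi_0| + \sum_{l=1}^{m_0} |\lambda_l| + \sum_{l=1}^{m_0} \big| \xi_{2m_0} \cap H(a_l - q, a_l + q] \big|.
\]

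The ingredients of $A^m$ now finish the first inequality in \eqref{q'}: by $J_{2m_0}$ each of the first $m_0 + 1$ summands is bounded by $C_\ast 2^{-\beta m} 2^{\beta n}$, and since $H(a_l - q, a_l + q] \subset H_{w_l}$ and the range $1 \le l \le m_0$ matches the condition $l \le i/2$ with $i = 2m_0$ in the definition of $L_{2m_0}$, each of the last $m_0$ summands is bounded by $2^{-\beta m} 2^{\beta n}$. Summing gives $|\xi_{2m_0} \cap H[a_0, a_{m_0}]| \le \widehat{C} m_0 \cdot 2^{-\beta m} 2^{\beta n}$ for some universal $\widehat{C} < \infty$. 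The second inequality is immediate: $\gamma_{n,m}(t_{\gamma_{n,m}}(a_{m_0})) \in H(a_{m_0})$ has first coordinate exactly $a_{m_0} = (2m_0 - 1) 2^{-m} \ge m_0 \cdot 2^{-m}$, so its Euclidean norm is at least $m_0 \cdot 2^{-m}$.

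The main obstacle is the stability inclusion together with the accompanying truncation: both rely on the geometric separations $a_l + q < a_i - 2q$ for $l < i$ and $a_l - q > a_{m_0}$ for $l \ge m_0 + 1$, each of which needs $q = 2^{-m - m_0}$ to be genuinely small compared to $2^{-m}$. Once this is established, the rest is a matter of aggregating the tail bounds already packaged into the event $A^m$.
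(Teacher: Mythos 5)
Your argument is correct, but it reorganizes the paper's proof in a way worth noting. The paper's proof introduces the intermediate object $\xi_{m_0} = \LE(S[0, t_S(a_{m_0+1})])$, shows (via the event $F_{2m_0}$) that $\xi_{m_0}$ and $\xi_{2m_0}$ agree up to the hitting time of $H(a_{m_0+1}-q)$, and then applies the \emph{length} inequality \eqref{0902'} directly to $\xi_{m_0}$ (so that the sum automatically stops at $l = m_0$), finally converting ${\rm len}(\xi_{m_0})$ into a bound on the hitting time. You instead work entirely with $\xi_{2m_0}$: you convert the hitting time into a slab count via simplicity and confinement to the tube, then re-derive a set-inclusion version $\xi_{2m_0} \subseteq \xi_0 \cup \bigcup_l \lambda_l \cup \bigcup_l (\xi_{2m_0} \cap H(a_l-q,a_l+q])$ from the internals of the proof of Lemma~\ref{lem1-critical} (the decomposition \eqref{0902-2} plus the stability $\xi_{i-1}\cap H(a_l-q,a_l+q] = \xi_i \cap H(a_l-q,a_l+q]$), and truncate the union to $l \le m_0$ by intersecting with the slab $H[a_0,a_{m_0}]$ instead of by shrinking $\xi_{2m_0}$ to $\xi_{m_0}$. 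Your truncation indeed relies on the same separation $q \ll 2^{-m}$ as the paper's facts (2)--(3), so the two routes use exactly the same geometric inputs; the difference is that the paper's truncation is ``temporal'' ($\xi_{m_0}$ is a prefix of $\xi_{2m_0}$) whereas yours is ``spatial'' (intersecting with a slab). Your version buys a small simplification (no separate identity between $\xi_{m_0}$ and $\xi_{2m_0}$ to check), at the cost of re-opening the proof of Lemma~\ref{lem1-critical} rather than invoking its statement. One cosmetic point: you bound the cardinalities $|\xi_0|$ and $|\lambda_l|$ by the $J_{2m_0}$ bounds, which actually control the \emph{lengths}; the discrepancy is $+1$ per term and is absorbed in $\widehat{C}$, but stating the off-by-one explicitly would make the last step airtight.
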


\begin{proof}
It is routine to show that on the event  $A^{m}$ the following six assertions hold:
\begin{itemize}
\item[(1)] $\xi_{2 m_{0}} \big[ 0, t_{ \xi_{2 m_{0}} } \big( a_{\frac{3}{2} \cdot  m_{0}} \big) \big] = \gamma_{n, m} \big[ 0, t_{ \gamma_{n,m} } \big(  
a_{\frac{3}{2} \cdot  m_{0}} \big) \big]$ \  by \eqref{a}.

\item[(2)] $\xi_{m_{0}} \big[0, t_{\xi_{m_{0}}} \big( a_{m_{0} + 1} -q \big) \big] =  \xi_{2 m_{0} } \big[0, t_{\xi_{2 m_{0}}} \big( a_{m_{0} + 1} -q \big) \big] $  since $S \big[ t_{S} \big( a_{m_{0} + 1} \big),  t_{S} \big( a_{2 m_{0}} \big) \big]$ does not hit $H (a_{m_{0}+ 1} -q ) $ by the event $F_{2 m_{0}}$.

\item[(3)] $H_{w_{l}} \cap \, \xi_{2 m_{0}} = H_{w_{l}} \cap \, \xi_{m_{0} } \ \  \text{ for all }  \ 1 \le l \le m_{0}$ thanks to (2) and the fact that $S \big[ t_{S} ( a_{m_{0} + 1} -q ), t_{S} (a_{2 m_{0}} ) \big] $ does not hit $H_{w_{m_{0}}}$.

\item[(4)] ${\rm len} (\xi_{m_{0} } ) \le  {\rm len} (\xi_{0} ) + \sum_{l = 1}^{m_{0} } \Big\{ {\rm len} ( \lambda_{l} ) + \Big| H_{w_{l}} \cap \, \xi_{2 m_{0}} \Big| \Big\}$ \  by \eqref{0902'} and (3)
(notice that $H_{w_{l}} = H \big[ a_{l} - q, a_{l} + q \big]$).

\item[(5)] ${\rm len} ( \xi_{0} ) \le C_{\ast} 2^{- \beta m} 2^{\beta n}$ \  and \  ${\rm len} (\lambda_{i} )  \le C_{\ast} 2^{- \beta m } 2^{\beta n}$ for all $1 \le i \le 2m_{0}$ \  by \eqref{c} and \eqref{b}.

\item[(6)] $\Big| H_{w_{l}} \cap \, \xi_{2 m_{0}} \Big| \le 2^{- \beta m} 2^{\beta n}$ for all $1 \le l \le m_{0}$ by the event $L_{2 m_{0}} $.

\end{itemize}

Combining these facts, it holds that 
\begin{equation}\label{q}
t_{\gamma_{n,m}} \big( a_{m_{0}} \big) = t_{ \xi_{2 m_{0}} } \big( a_{ m_{0}} \big) = t_{\xi_{m_{0}}} \big( a_{m_{0}}  \big) \le {\rm len} (\xi_{m_{0}} ) \le \widehat{C}  m_{0} \, 2^{- \beta m} \, 2^{ \beta n}
\end{equation}
on the event $A^{m}$. Note that by definition $ \Big| \gamma_{n,m} \Big( t_{\gamma_{n,m}} \big( a_{m_{0}} \big) \Big) \Big| \ge m_{0} 2^{- m}$. So the proof is completed.
\end{proof}

Finally, we reach our goal of this subsection in the next proposition. Write 
\begin{equation}\label{0908-1}
\eta_{n,m} (t ) = \gamma_{n,m} \big(  2^{\beta n} t \big) \ \ \text{ for } \ 0 \le t \le 2^{- \beta n} \, {\rm len} \big( \gamma_{n,m} \big)
\end{equation}
for the time-rescaled version of $\gamma_{n,m}$ (recall that $\gamma_{n,m}$ is defined as in \eqref{0908})

\begin{prop}\label{cor-cri-1}
There exist a universal constant $c > 0$ a such that on the event $A^{m}$ defined as in \eqref{am} 
\begin{equation}\label{o}
\sup_{0 \le s < t \le t_{\eta_{n,m}}} \frac{|\eta_{n,m} (s) - \eta_{n,m} (t) |}{|s-t|^{\frac{1}{\beta}}} \ge c \, m_{0}^{1 - \frac{1}{\beta}},
\end{equation}
where $\eta_{n,m}$ is defined as in \eqref{0908-1}.
\end{prop}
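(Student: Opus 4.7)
The plan is straightforward given Lemma \ref{lem-2-critical}: I would exhibit an explicit pair of times $(s,t)$ for $\eta_{n,m}$ at which the Hölder-$1/\beta$ quotient is of order $m_0^{1-1/\beta}$, leveraging the two conclusions of that lemma.

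Concretely, I would take $s=0$ and
$$ t := 2^{-\beta n}\, t_{\gamma_{n,m}}\bigl( a_{m_0}\bigr), $$
so that $\eta_{n,m}(s)=\gamma_{n,m}(0)=0$ and $\eta_{n,m}(t)=\gamma_{n,m}\bigl(t_{\gamma_{n,m}}(a_{m_0})\bigr)$. The first assertion of \eqref{q'} gives
$$ |s-t| = 2^{-\beta n}\, t_{\gamma_{n,m}}(a_{m_0}) \le \widehat{C}\, m_0\, 2^{-\beta m}, $$
so $|s-t|^{1/\beta} \le \widehat{C}^{1/\beta} m_0^{1/\beta}\, 2^{-m}$. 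The second assertion of \eqref{q'} gives
$$ |\eta_{n,m}(s)-\eta_{n,m}(t)| = \Bigl|\gamma_{n,m}\bigl(t_{\gamma_{n,m}}(a_{m_0})\bigr)\Bigr| \ge m_0\, 2^{-m}. $$
Dividing the two bounds yields
$$ \frac{|\eta_{n,m}(s)-\eta_{n,m}(t)|}{|s-t|^{1/\beta}} \ge \widehat{C}^{-1/\beta}\, m_0^{1-1/\beta}, $$
and taking the supremum over $0\le s<t\le t_{\eta_{n,m}}$ only increases this value, so setting $c:=\widehat{C}^{-1/\beta}$ finishes the proof.

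Since $t\le t_{\eta_{n,m}}$ needs to be verified, I would also note that by definition $t_{\eta_{n,m}} = 2^{-\beta n}\,\mathrm{len}(\gamma_{n,m})$, and $t_{\gamma_{n,m}}(a_{m_0})\le \mathrm{len}(\gamma_{n,m})$ since on $A^m$ the LERW $\gamma_{n,m}$ reaches $H(a_{m_0})$ (indeed, on the event $U_{2m_0}\cap F_{2m_0}$ it reaches beyond, up to the boundary of $B(40 m_0 2^{-m})$). There is no real obstacle here; the substance of the argument was already done in Proposition \ref{lem-critical} and Lemma \ref{lem-2-critical}, which produced a tube-crossing event on which $\gamma_{n,m}$ travels a macroscopic distance $m_0 2^{-m}$ in only $O(m_0)\cdot 2^{-\beta m}2^{\beta n}$ steps, i.e.\ by a factor $m_0^{\beta-1}$ faster than the typical diffusive-type rate $\asymp m_0^{\beta}2^{-\beta m}2^{\beta n}$; the final $m_0^{1-1/\beta}$ blowup is exactly this speed-up factor raised to the power $1/\beta$.
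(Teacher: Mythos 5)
Your proof is correct and coincides with the paper's own argument: both evaluate the Hölder quotient at $s=0$ and $t=2^{-\beta n}\,t_{\gamma_{n,m}}(a_{m_0})$, then plug in the two bounds from Lemma \ref{lem-2-critical} to obtain the lower bound $c\,m_0^{1-1/\beta}$ with $c=\widehat{C}^{-1/\beta}$. Your extra check that $t\le t_{\eta_{n,m}}$ is a reasonable sanity verification that the paper leaves implicit.
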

\begin{proof}
By \eqref{q}, 
\begin{equation}\label{g}
\sup_{0 \le s < t \le t_{\eta_{n,m}}} \frac{|\eta_{n,m} (s) - \eta_{n,m} (t) |}{|s-t|^{\frac{1}{\beta}}} \ge \frac{ \Big| \gamma_{n,m} \Big( t_{\gamma_{n,m}} \big( a_{m_{0}} \big) \Big) \Big| }{ \Big\{ 2^{- \beta n} \,  t_{\gamma_{n,m}} \big( a_{m_{0}} \big) \Big\}^{\frac{1}{\beta}}  } \ge c \, m_{0}^{1 - \frac{1}{\beta}}
\end{equation}
on the event $A^{m}$. Here we considered the case that $s=0$ and $t= 2^{- \beta n} \, t_{\gamma_{n,m}} \big( a_{m_{0}} \big)$ in the first inequality of \eqref{g}. 
\end{proof}

\subsection{The iteration argument}
In the previous subsection, we have shown that LERW is not $1/\beta$-H\"older continuous under the event $A^{m}$ defined in \eqref{am} which occurs with probability bounded from below by a constant.  
We want to boost this result by finding an event $A'$  where we have a good control of modulus of continuity as in \eqref{o} but with probability close $1$. For that purpose,  we will make use of iteration arguments in a multi-scale analysis similar to those in \cite[Proposition 6.6]{BM} and \cite[Proposition 8.11]{S}.

We start with our setup. Recall the definition of $m$ and $m_0$ from \eqref{eq:mdef}. Note that we have assumed $2^{-n}$ is smaller than $\exp \big\{ - 2^{m^{100}} \big\}$.

\begin{figure}[h]
\begin{center}
\includegraphics[scale=0.65]{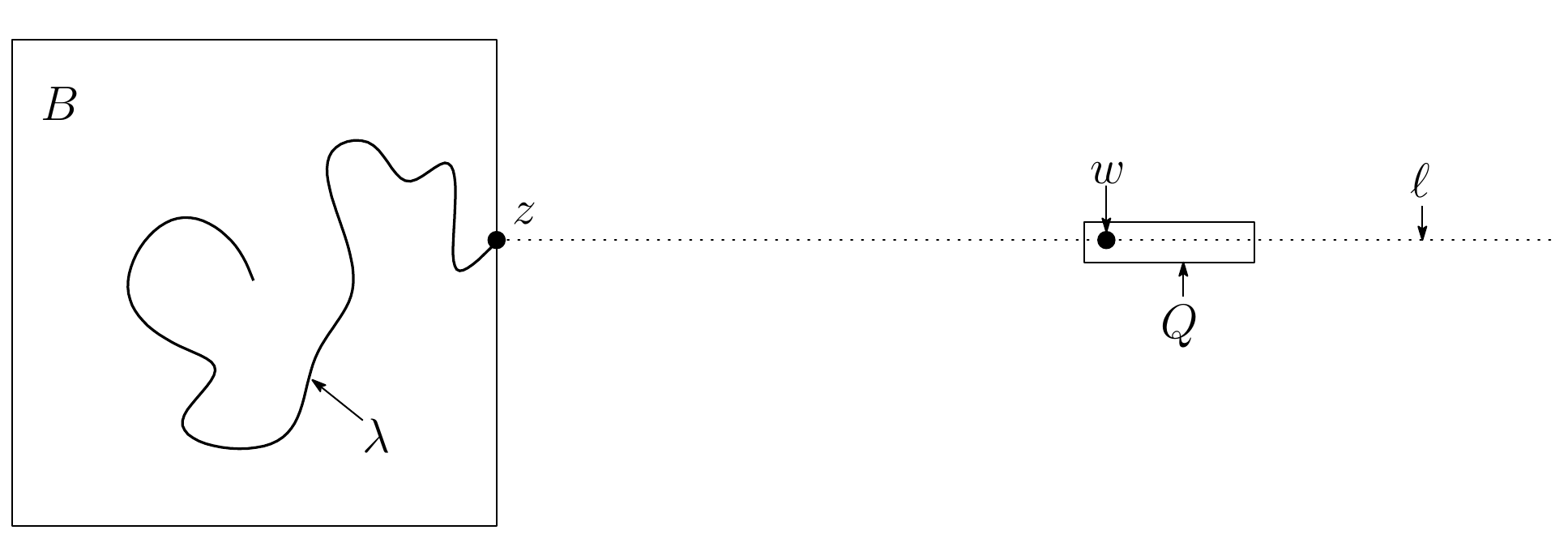}
\caption{Setup in Definition \ref{DEF-it}.}\label{TR-f}
\end{center}
\end{figure}

\begin{dfn}\label{DEF-it}
Take an integer $M > 0$ and write $B = \{ x \in \mathbb{R}^{3} \ | \  \| x \|_{\infty} \le M \cdot 2^{-m}  \}$ for the cube of side length $ M \cdot 2^{-m + 1}$ centered at the origin. We assume that $B \subset \frac{3}{4} \cdot \mathbb{D}$. 

Consider a simple path $\lambda \subset 2^{-n} \mathbb{Z}^{3}$ such that $\lambda (0) = 0$, $\lambda [0, {\rm len} (\lambda ) - 1] \subset B$ and $\lambda \big( {\rm len} ( \lambda ) \big) \in \partial B$. Set $z = \lambda \big(  {\rm len} ( \lambda ) \big)$. 
By symmetry, without loss of generality, we may assume that $z$ lies in the right face of $B$.
We write $\ell$ for the half line starting from $z$ with $\ell \cap B = \{ z \}$ which is perpendicular to a face of $B$ containing the point $z$ (by our assumption, $\ell$ is parallel to $x$-axis).
Set $w \in \ell$ for the point in the half line such that $|z - w| = 100 m_{0} \cdot  2^{-m}$. Let 
\begin{equation}\label{0905}
Q = w + H \big[ a_{0}, a_{2 m_{0}} \big] 
\end{equation}
be a translated version of the tube $H \big[ a_{0}, a_{2 m_{0}} \big]$ (recall that $H [a, b]$ and $a_{i}$ are defined as in Definition \ref{partition} and \eqref{a_i}). See Figure \ref{TR-f} for the setting.

Finally, we write $X$ for the random walk in $2^{-n} \mathbb{Z}^{3}$ started at $z$ which is conditioned that $X [1, T^{X} ] \cap \lambda = \emptyset$, where $T^{X} : = T^{X}_{1}$ stands for the first time that $X$ exits from $\mathbb{D}$.
\end{dfn}

Recall that $\gamma_{n}$ is defined as in \eqref{gammandef}. We remark that by the domain Markov property (see Section \ref{DMPLEW} for this), conditioned on the event $\gamma_{n} [0, U] = \lambda$, the conditional distribution of $\gamma_{n} [U, {\rm len} (\gamma_{n} ) ]$ the remaining part of $\gamma_{n}$ is given by the loop-erasure of $X [0, T^{X}]$.  Here $U:= \tau^{\gamma_{n}}_{B}$ is the first time that $\gamma_{n}$ exits from $B$.

We will prove that modulus of continuity of $\text{LE} \big( X [0, T^{X}] \big)$ restricted in $Q$ satisfies a similar inequality as described in  \eqref{o} with probability at least $c 2^{- 4 m_{0}^{2}}$, where the constant $c > 0$ is universal which does not depend on $M$ and $\lambda$. This uniform estimate enables us to proceed the iteration argument.

To be more precise, we make the following definitions
\begin{align}
&\bullet \tau_{0} = \inf \big\{ j \ge 0 \ \big| \ \| X (j) - z \|_{\infty} \ge (100 m_{0} - 1) \cdot 2^{-m} \big\}. \notag \\
&\bullet \tau_{1} = \inf \big\{ j \ge 0 \ \big| \ X (j) \in w + H (a_{1} ) \big\} \text{ (see Definition \ref{partition} and \eqref{a_i} for $H (a)$ and $a_{i}$)}; \notag \\
&\bullet \tau_{2} = \inf \big\{ j \ge 0 \ \big| \ X (j) \in w + H \big( a_{2 m_{0} + 1} \big) \big\}; \notag \\
&\bullet \tau_{3} = \inf \big\{ j \ge \tau_{1} \ \big| \ X (j) \notin B \big( X ( \tau_{1} ), 40  m_{0} \cdot 2^{-m} \big) \big\}; \notag \\
&\bullet \text{call $k \in [\tau_{1}, \tau_{3}]$ a {\bf good cut time} for $X$ if }  \notag \\
&\text{ \ \   -- (i) $X [\tau_{1}, k ] \cap X [k+1, \tau_{3} ] = \emptyset$,} \notag \\
&\text{ \ \   -- (ii) $X [\tau_{1}, k ] \subset w+ H \big[ 0, a_{2} + q \big] $, $X (k) \in w + H [ a_{2} + q/2, a_{2} + q ]$} \notag \\
&\text{ \ \ \ \   \ \ \ \ \  \  and $X [ k, \tau_{2} ] \subset w+ H \big[ a_{2},  a_{2 m_{0} + 1}  \big] $,} \notag \\
&\text{ \ \   -- (iii) $ X [\tau_{2}, \tau_{3} ] \cap B \Big( z,   103 m_{0} \cdot 2^{-m } \Big) = \emptyset$,}  \notag \\
&\text{ \ \   -- (iv) $  X (\tau_{3}) \in B \big( w, 41 m_{0} \cdot 2^{-m} \big) \cap \Big( z + \big\{ (y^{1}, y^{2}, y^{3} ) \in \mathbb{R}^{3}  \ \big| \ y^{1} \ge 138 m_{0} \cdot 2^{-m}  \big\} \Big) $;} \label{goodcut}\\
&\text{and call $X(k)$ a {\bf good cut point};}  \notag \\
&\bullet \text{ if $X$ has a good cut time in $[\tau_{1}, \tau_{3} ]$ let $k_{\ast}  $ be the smallest good cut time and set} \notag \\
& \ \ \ \ \ \ \ \  \ \xi = \text{LE} \big( X [ k_{\ast}, \tau_{3} ] \big) \ \ \text{ and } \  \   U_{1} = \inf \big\{ j \ge 0 \ \big| \ \xi (j) \in w + H (a_{m_{0}} ) \big\}, \label{xi-u1}
\end{align}
and then define the events $V_{j}$ as follows:
\begin{align}
&V_{1} = \Big\{ X ( \tau_{0} ) \in   w + G ( a_{0} ), \  \text{dist} \big( X [0, \tau_{0} ], \ell \big) \le m_{0} 2^{-m}    \Big\}    \notag \\
&\text{ \ \ \ \ \ \ \ ($G (a)$ and $\ell$ are defined in \eqref{0902-d-1} and Definition \ref{DEF-it} resp.)}; \notag \\
&V_{2} =  \Big\{ X [ \tau_{0}, \tau_{1} ] \subset w + H \big[ a_{0} - 2^{-m}, a_{1} \big], \  X ( \tau_{1} ) \in w + G (a_{1} ) \Big\}; \notag \\
&V_{3} = \Big\{ X \text{ has a good cut time in $ [\tau_{1}, \tau_{3} ]$} \Big\}; \label{eq:Videf} \\
&V_{4} = \Big\{   U_{1} \le \widehat{C} m_{0} 2^{- \beta m} 2^{\beta n} \Big\} \ \ \text{(the constant $\widehat{C}$ is defined as in Lemma \ref{lem-2-critical})}; \notag \\
&V_{5} = \Big\{ X [\tau_{3}, T^{X} ] \cap B \big( z,   110 m_{0} \cdot 2^{-m } \Big) = \emptyset \big\}.\notag
\end{align}
See Figures \ref{ponta1} and \ref{ponta2} for sketches of the event $V_{i}$, $i=1,2,3$. (All constants above are chosen so that Lemma \ref{lem-con-1} and Proposition \ref{lem-con-2} below hold.)

\begin{figure}[h]
\begin{center}
\includegraphics[scale=0.7]{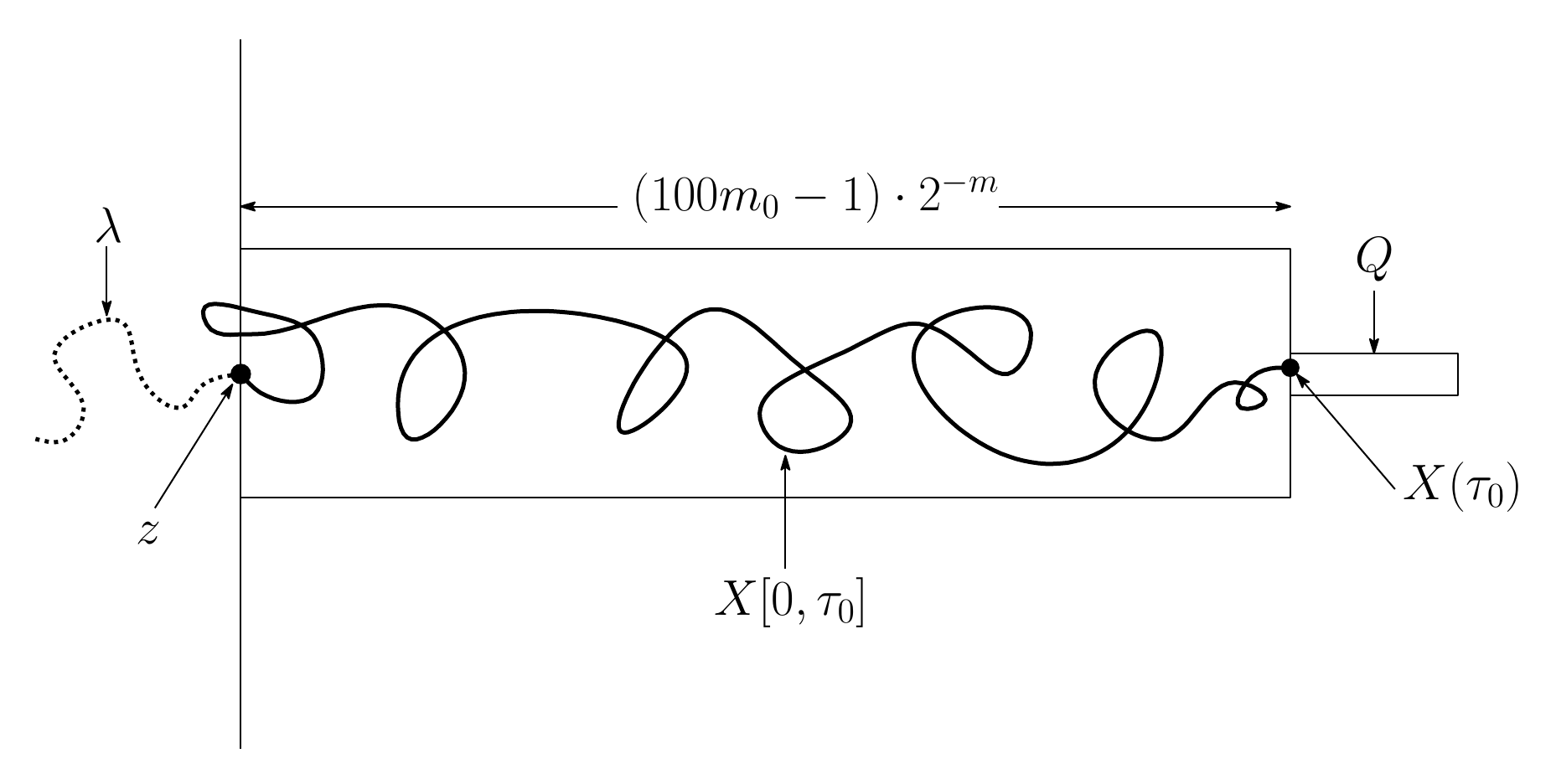}
\caption{Illustration for the event $V_{1}$. $X ( \tau_{0} )$ is located at the left face of $Q$. The height of the long (left) tube is $2m_{0} \cdot 2^{-m}$.}\label{ponta1}
\end{center}
\end{figure}

\begin{figure}[h]
\begin{center}
\includegraphics[scale=0.65]{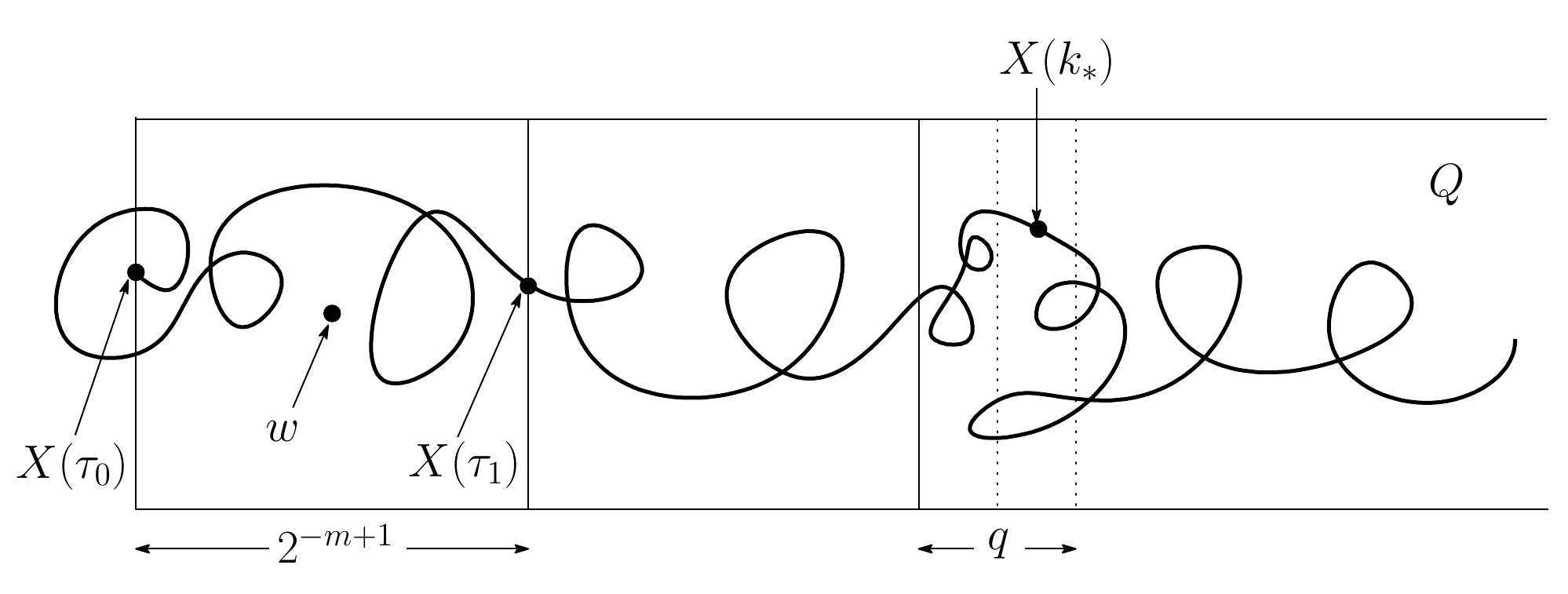}
\caption{Illustration for the events $V_{2}$ and $V_{3}$. $X (k_{\ast})$ is a good cut point.}\label{ponta2}
\end{center}
\end{figure}

Write \begin{equation*}
\kappa = \text{LE} \big( X [0, T^{X} ] \big)
\end{equation*}
for the loop-erasure of $X [0, T^{X} ]$. 

The next lemma shows that on the event 
\begin{equation}\label{eq:Vdef}
    V := V_{1} \cap V_{2} \cap \cdots \cap V_{5},
\end{equation} 
we can control the modulus of continuity of the loop-erasure of $\kappa$. 

Let
\begin{equation*}
\eta_{X} (t) = \text{LE} \kappa ( 2^{\beta n} t ) \ \  \ \ \  0 \le t \le 2^{- \beta n} \cdot {\rm len}( \kappa )
\end{equation*}
be the time rescaled version of $\kappa$. Write 
\begin{equation*}
U^{X} = \inf \big\{ t \ge 0 \ \big| \  | \eta_{X} ( t)   - X (0) | \ge 150 m_{0} \cdot 2^{-m } \big\}.
\end{equation*}
\begin{lem}\label{lem-con-1}
On the event $V$, one has
\begin{equation*}\label{m'}
\sup_{0 \le s < t \le U^{X} } \frac{|\eta_{X} (s) - \eta_{X} (t) |}{|s-t|^{\frac{1}{\beta}}}  \ge c_{\star} \, m_{0}^{1 - \frac{1}{\beta}},
\end{equation*}
for some universal constant $c_{\star} > 0$.
\end{lem}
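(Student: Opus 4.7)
The plan is to exhibit two rescaled parameter times $0 \le s < t \le U^{X}$ on $\eta_{X}$ with $|\eta_{X}(s) - \eta_{X}(t)| \ge c \, m_{0} \cdot 2^{-m}$ and $|s-t| \le C \, m_{0} \cdot 2^{-\beta m}$; the ratio then produces the required $m_{0}^{1 - 1/\beta}$-type lower bound. The two corresponding points on $\kappa$ will be $X(k_{\ast})$ and $\xi(U_{1})$; writing $\xi(U_{1}) = X(s_{1})$ for the unique $s_{1} \in [k_{\ast}, \tau_{3}]$ realizing this vertex in the loop-erasure construction, the task reduces to (a) showing both $X(k_{\ast})$ and $X(s_{1})$ are cut points of $X[0, T^{X}]$, and (b) reading off the parameter gap between them along $\kappa$.

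Verifying that $X(k_{\ast})$ is a cut point of $X[0, T^{X}]$ will be the easier step. Condition (i) of the definition of a good cut time already supplies the needed disjointness inside $[\tau_{1}, \tau_{3}]$. For $X[0, \tau_{1} - 1]$, events $V_{1}$ and $V_{2}$ will confine the walk to a region whose $x$-coordinate is at most $w^{1} + a_{1}$, strictly less than that of $X(k_{\ast}) \in w + H[a_{2} + q/2, a_{2} + q]$; for $X[\tau_{3} + 1, T^{X}]$, a triangle inequality will give $|X(k_{\ast}) - z| < 110 \, m_{0} \cdot 2^{-m}$, whence $V_{5}$ finishes the argument.

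The main obstacle will be to show that $X(s_{1})$ is likewise a cut point of $X[0, T^{X}]$. This will be driven by the key observation that $\xi(U_{1}) \in w + H(a_{m_{0}}) \subset B(z, 103 \, m_{0} \cdot 2^{-m})$: combined with condition (iii) of the good cut time, this will rule out $s_{1} \in [\tau_{2}, \tau_{3}]$, forcing $s_{1} < \tau_{2}$. That in turn pins $X[k_{\ast}, s_{1}] \subset X[k_{\ast}, \tau_{2}] \subset w + H[a_{2}, a_{2 m_{0} + 1}] \subset B(z, 110 \, m_{0} \cdot 2^{-m})$, which $V_{5}$ decouples from $X[\tau_{3} + 1, T^{X}]$. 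Combined with the cut-time property of $s_{1}$ in $X[k_{\ast}, \tau_{3}]$ (inherited from it being a vertex of $\xi$) and of $k_{\ast}$ in $X[0, T^{X}]$ (from the previous paragraph), a short case analysis over the decomposition of $[0, T^{X}]$ into the blocks separated by $k_{\ast}$, $s_{1}$, and $\tau_{3}$ will then yield the cut-time property of $s_{1}$ on the whole walk.

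With both points on $\kappa$ at respective parameter times $t_{1} < t_{2}$, I would use the standard splitting of a loop-erasure at a cut point to get $\kappa[t_{1}, t_{2}] = \text{LE}(X[k_{\ast}, s_{1}])$, and then identify this with $\xi[0, U_{1}]$ via the fact that the initial segment of a loop-erasure up to a specified vertex depends only on the walk up to the corresponding cut time. Then $V_{4}$ will yield $t_{2} - t_{1} = U_{1} \le \widehat{C} \, m_{0} \cdot 2^{-\beta m} 2^{\beta n}$, while a direct inspection of $x$-coordinates will produce $|X(k_{\ast}) - \xi(U_{1})| \ge a_{m_{0}} - a_{2} - q \ge c \, m_{0} \cdot 2^{-m}$ for $m_{0}$ large. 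Since the containment $X[0, s_{1}] \subset B(z, 105 \, m_{0} \cdot 2^{-m})$ is already established en route, $\kappa[0, t_{2}]$ stays inside $B(z, 150 \, m_{0} \cdot 2^{-m})$, so both $t_{1}, t_{2}$ are bounded by $2^{\beta n} U^{X}$ in the unrescaled parameter. Taking $s = 2^{-\beta n} t_{1}$, $t = 2^{-\beta n} t_{2}$ and computing the ratio $|\eta_{X}(s) - \eta_{X}(t)|/|s - t|^{1/\beta}$ will then produce the claimed $c_{\star} \, m_{0}^{1 - 1/\beta}$.
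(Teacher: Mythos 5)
Your geometric observations (the containments, the decoupling via $V_{5}$, the bound on $|X(k_{\ast})-\xi(U_{1})|$) are all correct, and your first step establishing $X(k_{\ast})$ as a global cut point of $X[0,T^{X}]$ matches the paper. However, the central claim in your second step --- that $s_{1}$ is a cut time of $X[k_{\ast},\tau_{3}]$ ``inherited from $\xi(U_{1})$ being a vertex of $\xi$'' --- is false. The last-visit time realizing a loop-erasure vertex is in general \emph{not} a cut time of the walk. For a simple counterexample take $\lambda = [0,1,2,3,2,1,2,3,4]$: $\mathrm{LE}(\lambda)=[0,1,2,3,4]$, the last visit to vertex $2$ occurs at time $6$, yet $\lambda[0,6]$ and $\lambda[7,8]$ both contain $3$, so time $6$ is not a cut time. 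Nothing in the event $V$ rules out an analogous erased excursion of $X$ before $s_{1}$ through a point visited again in $(s_{1},\tau_{2}]$, so your ``short case analysis'' cannot upgrade $s_{1}$ to a cut time of the whole walk; the sub-case $X[k_{\ast},s_{1}]\cap X[s_{1}+1,\tau_{3}]$ remains unresolved.

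The paper sidesteps this entirely. Rather than trying to make $s_{1}$ a cut point, it only uses that $k_{\ast}$ is a global cut time (giving $\kappa=\mathrm{LE}(X[0,k_{\ast}])\oplus\mathrm{LE}(X[k_{\ast},T^{X}])$), and then shows directly, via the iterated-loop-erasure decomposition, that $\mathrm{LE}(X[k_{\ast},T^{X}])$ begins with $\xi[0,s_{1}^{\mathrm{paper}}]$ where $s_{1}^{\mathrm{paper}}$ is the first index of $\xi$ visited by $X[\tau_{3},T^{X}]$; it then verifies $U_{1}<s_{1}^{\mathrm{paper}}$ precisely because $X[\tau_{3},T^{X}]$ never touches the set $w+H[a_{2},a_{2m_{0}+1}]\supset\xi[0,U_{1}]$ (by $V_{5}$ and the good-cut-time conditions). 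This yields $\xi[0,U_{1}]$ as a consecutive sub-path of $\kappa$, with the time gap between its endpoints in the parametrization of $\kappa$ equal to $U_{1}$, without any claim about $X(s_{1})$ being a cut point. Your argument can be repaired by replacing the ``$s_{1}$ is a cut time'' step with this weaker fact: what you really need is only that $\xi[0,U_{1}]$ survives as a prefix of $\mathrm{LE}(X[k_{\ast},T^{X}])$, i.e.\ that it is not erased by the subsequent portion of the walk, which your containment and decoupling observations already give.

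A minor point in your favor: you write that $w+H[a_{2},a_{2m_{0}+1}]\subset B(z,110\,m_{0}2^{-m})$, which is the correct containment; the paper's phrasing at the corresponding spot (``$B(z,110\,m_{0}2^{-m})\cap(w+H[a_{2},a_{2m_{0}+1}])=\emptyset$'') appears to be a typo for exactly this inclusion.
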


\begin{proof} 
On the event $V$, it is easy to check that $X [0, k_{\ast} ]  \cap X [ k_{\ast} + 1, T^{X} ] = \emptyset$ where $k_{\ast} \in [\tau_{1}, \tau_{3}]$ is the smallest good cut time.  (All constants used in the definition of $V_{i}$ are chosen to force  $k_{\ast}$ to become a global cut time.) 
Decomposing the path $X [0, T^{X} ]$ at the cut point $X (k_{\ast} )$, we see that $\kappa = \text{LE} \big( X [0, k_{\ast} ] \big) \oplus \text{LE} \big( X [ k_{\ast} , T^{X} ] \big) $. Letting 
\begin{equation*}
s_{1} = \min \big\{ j \ge 0  \ \big| \ \xi (j) \in X [ \tau_{3} , T^{X} ] \big\} \ \ \ \text{ and }  \ \  \  s_{2} =  \max \big\{ \tau_{3} \le j \le T^{X} \ \big| \ X(j) = \xi ( s_{1} ) \big\},
\end{equation*}
(recall that $\xi$ is defined as in \eqref{xi-u1}) it holds that 
\begin{equation*}
\text{LE} \big( X [ k_{\ast} , T^{X} ] \big)  = \xi [0, s_{1} ] \oplus \text{LE} \big( X [ s_{2}, T^{X} ] \big).
\end{equation*}

Write 
\begin{equation*}
U^{\kappa} =  \inf \big\{ j \ge 0 \ \big| \  | \kappa (j)   - z | \ge 150 m_{0} \cdot 2^{-m } \big\},
\end{equation*}
We will prove that on the event $V$
\begin{equation}\label{0906-c-1}
\xi [0, U_{1} ] \subset \kappa [0, U^{\kappa}].
\end{equation}

To prove this, since $X[k_{\ast}, \tau_{2} ] \subset w + H \big[ a_{2}, a_{2 m_{0} + 1} \big] $ and $X [ \tau_{2}, \tau_{3} ] \cap B \big( z, 103 m_{0} \cdot 2^{-m} \big) = \emptyset$ by (ii) and (iii) of \eqref{goodcut}, we see that $\xi [0, U_{1} ]  \subset X [k_{\ast}, \tau_{2} ] \subset  w + H \big[ a_{2}, a_{2 m_{0} + 1} \big] $. In particular, $\xi [0, U_{1} ] \cap X [ \tau_{3}, T^{X} ] = \emptyset$ because   $X [ \tau_{3}, T^{X} ] $ does not return to $B \big( z,   110 m_{0} \cdot 2^{-m } \big)$ by the event $V_{5}$ and $B \big( z,   110 m_{0} \cdot 2^{-m } \big) \cap \Big( w + H \big[ a_{2}, a_{2 m_{0} + 1} \big] \Big) = \emptyset$ by our construction. This implies that $U_{1} < s_{1}$ and $\xi [0, U_{1} ] \subset \kappa$. However, since our construction ensures that $X[0, \tau_{3} ] \subset B \big( z, 145 m_{0} \cdot 2^{-m} \big)$, it follows that  $\text{LE} \big( X [0, k_{\ast} ] \big) \oplus \xi [0, s_{1}] \subset X[0, \tau_{3} ]  \subset B \big( z, 145 m_{0} \cdot 2^{-m} \big)$, and thus
\begin{equation*}
  {\rm len} \Big( \text{LE} \big( X [0, k_{\ast} ] \big) \Big) + s_{1} < U^{\kappa}.
\end{equation*}
This gives $\xi [0, s_{1}]  \subset \kappa [0, U^{\kappa}] $. Combining this with the fact that $U_{1} < s_{1}$, we get \eqref{0906-c-1}.

The event $V_4$ (see \eqref{eq:Videf}) guarantees that $U_{1} \le \widehat{C} m_{0} 2^{- \beta m} 2^{\beta n}$, while $| \xi (0) - \xi (U_{1} ) | = | X (k _{\ast} ) - \xi (U_{1} ) | \ge (2m_{0} - 1) \cdot 2^{-m} - 5 \cdot 2^{-m} \ge m_{0} 2^{-m} $ by the condition (ii) of \eqref{goodcut} for the location of $X (k_{\ast} )$ and our choice of $U_{1}$ as in \eqref{xi-u1} (we also used the fact that $m_{0} \ge 10$ in the last inequality, see \eqref{eq:mdef} for this). Therefore, writing 
\begin{equation}
\eta_{X} (t) = \kappa ( 2^{\beta n} t ) \ \  \ \ \  0 \le t \le 2^{- \beta n} \, {\rm len} ( \kappa ) 
\end{equation}
for the time-rescaled version of $\kappa =  \text{LE} \big( X [0, T^{X} ] \big)$ and letting 
\begin{equation*}
U^{X} = \inf \big\{ t \ge 0 \ \big| \  | \eta_{X} ( t)   - z | \ge 150 m_{0} \cdot 2^{-m } \big\} = 2^{- \beta n} U^{\kappa},
\end{equation*}
we have that 
\begin{equation}\label{m}
\sup_{0 \le s < t \le U^{X}} \frac{|\eta_{X} (s) - \eta_{X} (t) |}{|s-t|^{\frac{1}{\beta}}} \ge \frac{ \big| \xi (0) - \xi ( U_{1} ) 
 \big| }{ \big\{ 2^{ - \beta n} U_{1} \big\}^{\frac{1}{\beta}}  } \ge c_{\star} \, m_{0}^{1 - \frac{1}{\beta}}     \ \ \ \text{ on the event } V,
\end{equation}
where the constant $c_{\star}$ is defined by 
\begin{equation}\label{cstar}
c_{\star} := \widehat{C}^{- \frac{1}{\beta}}.
\end{equation}
This finishes the proof.
\end{proof}

Now the remaining task is therefore to give a lower bound on $P(V)$.

\begin{prop}\label{lem-con-2}
There exists a universal constant $c > 0$ such that 
\begin{equation}\label{0905-1'}
P (V) \ge c \, 2^{ - 4 m_{0}^{2} } .
\end{equation}
\end{prop}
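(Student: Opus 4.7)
The plan is to decompose $V$ along the stopping times $\tau_0 < \tau_1 < \tau_2 < \tau_3 < T^X$ using the strong Markov property, then estimate each contribution separately. Throughout, the guiding principle is that inside the tube $Q$ (which sits at distance $\asymp m_0 2^{-m}$ from $B$, hence from $\lambda$), the conditioning $X[1,T^X]\cap\lambda=\emptyset$ is \emph{mild}: by the Harnack principle applied to the harmonic function $h(\cdot)=P^{\cdot}(S[0,T^S]\cap\lambda=\emptyset)$ on the connected open set ${\rm dist}(\cdot,\lambda)\geq c 2^{-m}$, the Radon–Nikodym derivative of $X$ restricted to excursions in a neighborhood of $Q$ with respect to the corresponding unconditioned SRW is bounded above and below by universal constants. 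This lets us import, almost verbatim, the unconditional estimates from Section 4.1.

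First, for $V_1$: starting from $z\in\partial B$, gambler's-ruin-type estimates \eqref{Gambler} plus the Harnack bound show $P(X(\tau_0)\in w+G(a_0),\ \mathrm{dist}(X[0,\tau_0],\ell)\le m_0 2^{-m})\geq c$ uniformly in $\lambda$, since the tube $Q$ is at macroscopic distance from $\lambda$. For $V_2$, conditioning further on $X[0,\tau_0]$ and applying the strong Markov property, the event $V_2$ is controlled by the probability that an SRW started at $X(\tau_0)\in w+G(a_0)$ first hits $w+H(a_1)$ inside $w+G(a_1)$ and meanwhile stays in $w+H[a_0-2^{-m},a_1]$; this is precisely a gambler's-ruin estimate and gives $P(V_2\mid V_1)\geq c\, 2^{-m_0}$.

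The main step is estimating $P(V_3\cap V_4\mid V_1\cap V_2)$. Conditionally on $X(\tau_1)\in w+G(a_1)$, the tube $Q$ is contained in the ``safe'' region $A_n(X(\tau_1))$ of Proposition \ref{prop:npoint2}; hence the loop-erasure of the conditioned walk, and indeed the walk itself through the Harnack comparison, behaves like the unconditioned setting of Section 4.1 up to universal multiplicative constants. One can then essentially repeat the proof of Proposition \ref{lem-critical}: the tube-crossing event $V_3$ is exactly the concatenation of (translated) events $A_i\cap B_i$ for $i=1,\ldots, 2m_0$ (built from a nice cut point in $Q_1^m$ near $X(\tau_1)$, plus successive traversal of the $Q_i$'s), giving a factor $c_\star^{2m_0}2^{-2m_0^2}\geq 2^{-3m_0^2}$ by Lemma \ref{lem2-critical}; and $V_4$, namely the length bound $U_1\leq \widehat{C}m_0 2^{-\beta m}2^{\beta n}$ for the loop-erasure $\xi$, follows from the analogue of Lemma \ref{lem-2-critical}, proved via Lemma \ref{lem1-critical} together with the moment bounds of Lemmas \ref{lem2-critical}–\ref{lem3-critical} applied to the conditioned walk through Proposition \ref{prop:npoint2} (which holds verbatim in the conditioned setting).

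Finally, for $V_5$, at $\tau_3$ the walk sits at distance $\geq 138 m_0 2^{-m}$ from $z$ by the definition of a good cut time; the probability that an SRW in the annulus $\{138 m_0 2^{-m}\leq|\cdot-z|\leq 1\}$ never returns to $B(z,110 m_0 2^{-m})$ before exiting $\mathbb{D}$ is bounded below by a universal constant via \eqref{srwbound}, and again the Harnack argument transfers this bound to $X$. Multiplying the four contributions yields $P(V)\geq c\cdot 1\cdot 2^{-m_0}\cdot 2^{-3m_0^2}\cdot 1 \geq c\cdot 2^{-4m_0^2}$ for all $m_0$ large enough, as required. The main obstacle is the conditioning: one must carefully verify that each of the four estimates above, originally proved for unconditioned $S$, goes through for $X$ with only a universal constant loss—this is where Proposition \ref{prop:npoint2} and the uniform Harnack comparison for the hitting potential of $\lambda$ (valid because $Q$, and the regions used in $V_1, V_2, V_5$, all lie outside $B$ at distance $\gtrsim m_0 2^{-m}$) do the crucial work.
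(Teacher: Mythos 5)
Your overall decomposition (strong Markov along $\tau_0<\tau_1<\tau_3<T^X$, Harnack comparison to import the tube-crossing estimate of Proposition \ref{lem-critical}, constant-order escape bound for $V_5$) matches the paper's, but two of your intermediate estimates are incorrect, and one of them is a genuine gap rather than mere imprecision.

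First, your claim $P(V_1)\geq c$ is false. The event $V_1$ forces $X(\tau_0)$ to land in $w+G(a_0)$, a square of side $\sim 2^{-m}$, while $\tau_0$ is the exit time of a box of side $\sim m_0 2^{-m}$ around $z$. The harmonic measure of this target from $z$ is of order $m_0^{-2}$, not of constant order, so the correct statement is $P(V_1)\geq c\,m_0^{-2}$ (this is exactly what the paper proves). In addition, Harnack does \emph{not} apply uniformly near $z$: when $X$ is still close to $z\in\partial B$, it is close to $\lambda$, and the conditioning is not mild, so the Radon--Nikodym comparison you invoke fails there. The paper has to patch this with a separate argument using \cite[Claim 3.4]{SS} to push the conditioned walk a distance $\sim m_0 2^{-m}$ away from $z$ at constant cost before Harnack can be applied; your proposal does not address this.

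Second, your estimate $P(V_2\mid V_1)\geq c\,2^{-m_0}$ is a valid lower bound but a wrong accounting: $V_2$ asks $X$ to cross from $w+G(a_0)$ to $w+G(a_1)$ inside $w+H[a_0-2^{-m},a_1]$, all at scale $2^{-m}$, so the correct estimate is $P(V_2\mid V_1)\geq c$ (no $m_0$-dependence). The gambler's-ruin factor $2^{-m_0}$ lives at the finer scale $q=2^{-m-m_0}$ inside $A_1\cap B_1$, and that cost is already counted in the $2^{-3m_0^2}$ from Proposition \ref{lem-critical}, not in $V_2$. Your final product $2^{-m_0}\cdot 2^{-3m_0^2}$ happens to be $\geq c\,2^{-4m_0^2}$ and to also sit below the paper's correct $m_0^{-2}\cdot 2^{-3m_0^2}$, so the end conclusion is reached by a cancellation of two errors. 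As written, the chain of reasoning does not hold: the $V_1$ bound you assert is simply not true, and the argument needs the $m_0^{-2}$ factor there together with the \cite[Claim 3.4]{SS} step to justify the Harnack comparison.

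Your treatment of the $V_3\cap V_4$ and $V_5$ steps is correct and matches the paper (Harnack at $w+G(a_1)$ to reduce to Proposition \ref{lem-critical}, and the escape estimate at $\tau_3$ respectively), and your use of Proposition \ref{prop:npoint2} for the conditioned walk in the tube is the right idea.
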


\begin{proof}
We first note that 
\begin{itemize}
\item $V_{1}$ is measurable with respect to $X[0, \tau_{0} ]$;

\item $V_{2}$ is measurable with respect to $X[\tau_{0}, \tau_{1} ]$;

\item $V_{3}$ and $V_{4}$ are measurable with respect to $X[\tau_{1}, \tau_{3} ]$;

\item $V_{5}$ is measurable with respect to $X[\tau_{3}, T^{X} ]$.
\end{itemize}
With this in mind, we will give a lower bound on $P (V)$ by making use of the strong Markov property as follows.

Using the strong Markov property at $\tau_{3}$, we have
\begin{equation}\label{0906-d-1}
P \big( V_{5} \ \big| \ V_{1} \cap V_{2} \cap V_{3} \cap V_{4} \big) \ge \min_{x \in D' } 
P^{x}_X  \Big( X [0, T^{X} ] \cap B \big( z,   110 m_{0} \cdot 2^{-m } \big) = \emptyset \Big),
\end{equation}
where $D' = B \big( w, 41 m_{0} \cdot 2^{-m} \big) \cap \Big( z + \big\{ (y^{1}, y^{2}, y^{3} ) \in \mathbb{R}^{3}  \ \big| \ y^{1} \ge 138 m_{0} \cdot 2^{-m}  \big\} \Big)$, see (iv) of \eqref{goodcut} for the location of $X (\tau_{3})$.
The right-hand side 
of \eqref{0906-d-1} is bounded below by a universal constant $c > 0$, as can be checked similarly to the inequality described in line -8, page 763 of \cite{S} for this. Thus, we have 
\begin{equation}\label{v5-esti}
P (V) \ge c P (V_{1} \cap V_{2} \cap V_{3} \cap V_{4} ).
\end{equation}

We will next deal with the conditional probability of $V_{3} \cap V_{4}$ on the event $V_{1} \cap V_{2}$. We note that, by the Harnack principle (see \eqref{Harnack} for this), it holds that if $\theta $ is a finite path in $2^{-n} \mathbb{Z}^{3}$ satisfying that $x= \theta (0) \in w + G (a_{1} )$ and  $\theta [0, {\rm len} ( \theta ) ] \subset B \big( x, 40 m_{0} \cdot 2^{-m} \big) $ then 
\begin{equation}\label{0906-a}
c P^{x}_{S} \big( S [0, \sigma_{3} ] = \theta \big) \le  P^{x}_{X} \big( X [0, \tau_{3}  ] = \theta \big) \le c^{-1} P^{x}_{S} \big( S [0, \sigma_{3} ] = \theta \big)
\end{equation}
for some universal constant $c > 0$, where $P^{x}_{R}$ stands for the probability law of the random walk $R$ with $R (0)=x$ and $\sigma_{3} : = T^{S}_{x, 40 m_{0} \cdot 2^{-m}} $ stands for the first time that $S$ exits from $B \big( x, 40 m_{0} \cdot 2^{-m} \big)$.   Note that if $x \in w + G (a_{1} )$ and $X (0) = x$ then $\tau_{1} = 0$ and $X [0, \tau_{3} ] = X [\tau_{1}, \tau_{3} ]$.

With \eqref{0906-a} in mind, we can use Proposition \ref{lem-critical} to prove that 
\begin{equation}\label{0906-1}
P \big( V_{3} \cap V_{4} \ \big|  \ V_{1} \cap V_{2} \big)  \ge \min_{x \in w + G (a_{1} )} P^{x} \big( V_{3} \cap V_{4} \big) 
\ge c 2^{- 3 m_{0}^{2}},
\end{equation}
where we applied  Proposition \ref{lem-critical}  to $X [0, \tau_{3} ] = X [\tau_{1} , \tau_{3} ]$ with $X (0) = x \in w + G (a_{1} ) $ in the following way. Combining \eqref{0906-a}, Proposition \ref{lem-critical} and the translation invariance for $S$, if $x \in w + G (a_{1} )$, we have
\begin{align*}
P^{x}_{X}  (V_{3} \cap V_{4} ) \ge c P^{0}_{S} (A^{m}) \ge c 2^{- 3 m_{0}^{2}},
\end{align*}
where the event $A^{m}$ is as defined in \eqref{am}. This gives \eqref{0906-1} since the first inequality of \eqref{0906-1} follows from the strong Markov property. From this and \eqref{v5-esti}, we have
\begin{equation}\label{v43}
P (V) \ge c 2^{- 3 m_{0}^{2}} P (V_{1} \cap V_{2}).
\end{equation}

Proving 
\begin{equation}\label{v2v1}
P ( V_{2}  \ | \ V_{1} ) \ge \min_{x \in w + G (a_{1} )} P^{x} ( V_{2} )  \ge c 
\end{equation}
is easy. In fact, the first inequality of \eqref{v2v1} follows from the strong Markov property, while the second inequality is obtained by comparing $X$ and $S$ through the Harnack principle as in \eqref{0906-a}. From this and \eqref{v43}, the proof of the proposition is completed if we show that 
\begin{equation}\label{vlast}
 P( V_{1} ) \ge c m_{0}^{-2}.
 \end{equation}

We cannot use a similar application of the Harnack principle as in \eqref{0906-a} to compare $X$ and $S$ when $X$ is close to $z = X (0)$. However, \cite[Claim 3.4]{SS} guarantees that 
\begin{equation}\label{vlast-1}
P \Big( X \big( T^{X}_{z, 2 m_{0} \cdot 2^{-m} } \big) \in W \Big) \ge c,
\end{equation}
where we recall that $T^{X}_{x, r}$ stands for the first time that $X$ exits from $B (x, r)$, and we define $W$ as follows. 
\begin{equation*}
W =  B \big( z, 2 m_{0} \cdot 2^{-m}  \big) \cap B \Big( z' , \frac{m_{0}}{2} \cdot 2^{-m} \Big),  \ \ \ \ \ \ z' = z + \big( 2 m_{0} \cdot 2^{-m}, 0, 0 \big).
\end{equation*}
Note that $z'$ lies on the half line $\ell$ satisfying $|z -z'| = 2 m_{0} \cdot 2^{-m}$. 

Once the event of \eqref{vlast-1} occurs,  $X[T^{X}_{z, 2 m_{0} \cdot 2^{-m} }+\cdot]$ then has a reasonable distance from the box $B$ and we can make use of the Harnack principle as in \eqref{0906-a} to show that 
\begin{equation}\label{vlast-2}
\min_{x \in W} P^{x} (V_{1} ) \ge c  m_{0}^{-2}.
\end{equation}
Here we used  the fact that
\begin{equation}
P \Big( S \big( \tau_{ Q (r) } \big) \in F ( \epsilon ) \Big) \ge c \epsilon^{2},
\end{equation}
in \eqref{vlast-2} where 
\begin{itemize}
\item $\tau_{A}$ stands for the first time that $S$ exits from $A$,

\item $Q (r) = [-r, r]^{3}$ is the cube of side length $2 r$ centered at the origin,

\item $y = (r, 0, 0)$, $\epsilon \in  (0, 1)$ and $F (\epsilon) = Q (r) \cap B ( y, \epsilon r )$. 

\end{itemize}
Combining \eqref{vlast-1} and \eqref{vlast-2}, we get \eqref{vlast} and finish the proof.
\end{proof}
 
Finally, as discussed at the beginning of this subsection, we now use an iteration argument similar to those as in   \cite[Proposition 6.6]{BM} and \cite[Proposition 8.11]{S} to show that $\zeta$ is not $1/\beta$-H\"{o}lder continuous in the next proposition, which constitutes the second half of Theorem \ref{main}. Recall the definition of $\eta$ in Section \ref{SCALING}.
\begin{prop}\label{CRITICAL}
 Almost surely, $\eta$ is not $1/\beta$-H\"{o}lder continuous. Namely, with probability one we have 
\begin{equation}\label{eq:notHolder}
\sup_{0 \le s < t \le t_{\eta} } \frac{|\eta (s) - \eta (t) |}{|s-t|^{\frac{1}{\beta}}} = \infty.
\end{equation}
\end{prop}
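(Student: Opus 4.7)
The plan is to carry out the multi-scale iteration argument already announced at the beginning of Section \ref{sec:4}, in the same spirit as \cite[Proposition 6.6]{BM} and \cite[Proposition 8.11]{S}. The essential building blocks are already in place: Proposition \ref{lem-con-2} provides, for each suitably positioned tube attached to the boundary of a cube, a uniform lower bound $c\,2^{-4m_0^2}$ on the probability of the event $V$; and Lemma \ref{lem-con-1} shows that on $V$, one witnesses a pair of times $(s,t)$ realizing a modulus ratio $|\eta_n(s)-\eta_n(t)|/|t-s|^{1/\beta} \ge c_\star\, m_0^{1-1/\beta}$. Since $\beta>1$ (see \eqref{betadef}), the exponent $1-1/\beta$ is strictly positive, so this lower bound diverges as $m\to\infty$.

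First I would fix arbitrary $K>0$ and $\epsilon\in(0,1)$, then choose $m$ large enough so that $c_\star\, m_0^{1-1/\beta}>K$, where $m_0=m^{1/10}$, and any $n$ with $2^{-n}<\exp(-2^{m^{100}})$. Next, I would set up a sequence of nested concentric cubes $B^l=\{x\in\mathbb{R}^3 : \|x\|_\infty\le M_l\,2^{-m}\}$ for $l=0,1,\ldots,M'$, with $M_l := 200\,l\,m_0$, so that consecutive cubes are far enough apart to house a tube $Q$ as in Definition \ref{DEF-it}. The condition $B^{M'}\subset\frac{3}{4}\mathbb{D}$ forces $M'\le c\cdot 2^m/m_0$; on the other hand, our choice will be $M':=\lceil C\, 2^{4m_0^2}\log(1/\epsilon)\rceil$, which is compatible for $m$ large since $2^{4m^{1/5}}$ is dwarfed by $2^m/m^{1/10}$.

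For the iteration proper, let $U_l:=\tau^{\gamma_n}_{B^l}$ and $\mathcal{F}_l:=\sigma(\gamma_n[0,U_l])$. By the domain Markov property (Section \ref{DMPLEW}), conditionally on $\mathcal{F}_{l-1}$ the continuation $\gamma_n[U_{l-1},\cdot]$ has the distribution of the loop-erasure of a random walk $X$ starting at $\gamma_n(U_{l-1})\in\partial B^{l-1}$ and conditioned to avoid $\gamma_n[0,U_{l-1}]$. Applying Proposition \ref{lem-con-2} with the natural rotation and translation aligned to this exit point, I obtain
\[
P\bigl(V^l\bigm|\mathcal{F}_{l-1}\bigr)\ \ge\ c\,2^{-4m_0^2},
\]
where $V^l$ is the suitably positioned analogue of the event $V$ sitting inside the annular region $B^l\setminus B^{l-1}$. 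Iterating, the probability that every $V^l$ fails is at most $(1-c\,2^{-4m_0^2})^{M'}\le\epsilon$ by our choice of $M'$. On the complementary event, fix any $l$ for which $V^l$ is realized; Lemma \ref{lem-con-1} yields a pair $(s^n,t^n)\in[0,t_{\eta_n}]^2$ with $|\eta_n(s^n)-\eta_n(t^n)|\ge m_0\,2^{-m}$ and $|t^n-s^n|\le\widehat{C}\,m_0\,2^{-\beta m}$, whose ratio exceeds $K$. This gives $P\bigl(\sup_{s<t}|\eta_n(s)-\eta_n(t)|/|t-s|^{1/\beta}>K\bigr)\ge 1-\epsilon$ for every admissible $n$.

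The last step is to pass this bound to $\eta$ using a Skorokhod coupling under which $\rho(\eta_n,\eta)\to 0$ almost surely, as in the proof of Proposition \ref{0626-a-1}. Writing $u^n:=s^n/t_{\eta_n}$, $v^n:=t^n/t_{\eta_n}\in[0,1]$ and extracting subsequential limits $(u,v)$, one invokes the uniform spatial lower bound $|\eta_n(s^n)-\eta_n(t^n)|\ge m_0\,2^{-m}$ together with the uniform convergence $\eta_n(\,\cdot\,t_{\eta_n})\to\eta(\,\cdot\,t_\eta)$ on $[0,1]$: if $u=v$, continuity of $\eta$ would force $\lim_n|\eta_n(s^n)-\eta_n(t^n)|=0$, a contradiction; hence $u<v$, and the ratio bound transfers to yield a pair in $\eta$ with ratio $\ge c_\star m_0^{1-1/\beta}>K$. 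Sending $\epsilon\downarrow 0$ then $K\uparrow\infty$ gives \eqref{eq:notHolder}. The main subtlety is precisely this transfer step: while the upper bound $|t^n-s^n|\le\widehat{C}\,m_0\,2^{-\beta m}$ is uniform in $n$, the witness pairs may otherwise shrink in time, and it is only the uniform lower bound on their spatial separation that prevents their collapse in the scaling limit.
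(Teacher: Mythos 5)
Your proposal is essentially correct and follows the same multi-scale iteration strategy as the paper: concentric cubes around the origin, a translated/rotated copy of the tube $Q$ in each annulus, the domain Markov property combined with Lemma \ref{lem-con-1} and Proposition \ref{lem-con-2} to get a uniform conditional success probability $c\,2^{-4m_0^2}$, and then iteration. The differences are worth flagging.

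\textbf{Parameter choices and the final limit.} The paper places $q_1\asymp 2^{\sqrt{m}}$ annuli at spacing $2^{-\sqrt{m}}$, giving a failure probability $\le C\exp\{-c'2^{c'\sqrt{m}}\}$ that is summable in $m$, so the a.s.\ statement follows by Borel--Cantelli over $m$; you instead fix $K,\epsilon$, place $M'\asymp 2^{4m_0^2}\log(1/\epsilon)$ annuli at spacing $200m_0 2^{-m}$, and send $\epsilon\downarrow 0$, $K\uparrow\infty$ at the end. Both are valid; the paper's version gives a quantitative rate, yours is slightly more elementary. In either variant one should note (as the paper does via $w_{q_1-1}<v'_{q_1}$) that the ``annular'' sub-events being iterated are measurable with respect to $\eta_n$ up to the next exit time, so that the conditional product bound is licit; you use $V^l$ directly, which is not quite $\mathcal{F}_l$-measurable, but the intended event is the ratio bound on the window $[v'_{l},w_l]$, which is.

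\textbf{The transfer from $\eta_n$ to $\eta$.} This is where your write-up actually improves on the paper's brevity. The functional $f(\lambda)=\sup_{s<t}|\lambda(s)-\lambda(t)|/|t-s|^{1/\beta}$ is only \emph{lower} semicontinuous in the metric $\rho$, so naive weak-convergence/Portmanteau reasoning does not transfer a lower bound on $P(f(\eta_n)\ge c)$ into a lower bound on $P(f(\eta)\ge c)$; one must track specific witness pairs. Your argument is the right one: the pair $(s^n,t^n)$ produced by Lemma \ref{lem-con-1} enjoys a uniform (in $n$) spatial lower bound $|\eta_n(s^n)-\eta_n(t^n)|\ge m_0 2^{-m}$ and a uniform temporal upper bound $|t^n-s^n|\le \widehat C m_0 2^{-\beta m}$, so under a Skorokhod coupling the subsequential limit $(u,v)$ of $(s^n/t_{\eta_n},t^n/t_{\eta_n})$ cannot satisfy $u=v$ (else continuity of $\eta$ forces the spatial separation to collapse), and the ratio passes to the limit along the convergent subsequence. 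The only small touch-up needed is to combine the events $E_n$ (witness pair exists for $\eta_n$) via reverse Fatou, $P(\limsup_n E_n)\ge\limsup_n P(E_n)\ge 1-\epsilon$, before extracting subsequences, so that the argument is indeed carried out on a single event of probability $\ge 1-\epsilon$. With that, the proof is complete.
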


Note that the event as described in \eqref{eq:notHolder} is measurable in the Borel $\sigma$-algebra generated by the metric $\rho$.

We introduce some notation before giving the proof. Set
\begin{equation}
m_{1} = m^{\frac{1}{2}},
\end{equation}
which is an integer by our assumption. Define 
\begin{equation*}
M_{l} = l \, 2^{- m_{1} } \ \ \ l= 1, 2, \cdots , q_{1}\mbox{, where }q_1=\sup\{q\in\mathbb{Z}^+ | M_q\leq 2/3\}.
\end{equation*}
Note that $q_{1} \asymp 2^{m_{1}}$. Using this, 
we define a sequence of cubes $\{ B^{l} \}_{l=1}^{q_{1}}$ by 
\begin{equation*}
B^{l} = \{ x \in \mathbb{R}^{3} \ | \ \| x \|_{\infty} \le M_{l} \}.
\end{equation*}
Since
\begin{equation}\label{boundarydist}
\text{dist} \big( \partial B^{l}, \partial B^{l+1} \big) = 2^{-m_{1} } = 2^{- m^{\frac{1}{2}} } > 200 m_{0} \cdot 2^{-m},
\end{equation}
there is enough space in each $B^{l+1} \setminus B^{l}$ to insert a translation of $Q$ as in \eqref{0905} in the annulus. See Figure \ref{iteration-f} for illustration. Let 
\begin{equation*}
v_{l} := \tau^{\gamma_{n}}_{B^{l}} = \inf \{ j \ge 0 \ | \ \gamma_{n} (j) \in \partial B^{l} \}
\end{equation*}
be the first time that $\gamma_{n} = \text{LE} \big( S^{(n)} [0, T^{(n)} ] \big) $ exits from $B^{l}$. Recall that $\eta_{n} (t) = \gamma_{n} ( 2^{\beta n} t ) $ stands for the time-rescaled version of $\gamma_{n}$. We set 
\begin{equation*}
v'_{l} = \inf \{ t \ge 0 \ | \ \eta_{n} (t) \in \partial B^{l} \} = 2^{- \beta n} v_{l}  \ \text{and}\    w_{l} = \inf \big\{ t \ge v'_{l} \ \big| \  | \eta_{n} ( t)   - \eta_{n} (v'_{l}) | \ge 150 m_{0} \cdot 2^{-m } \big\}. 
\end{equation*}
Note that $v_{l}' < w_{l}  < v_{l+1}'$ thanks to \eqref{boundarydist}.

\begin{proof}
Define the event $W^{l}$ by
\begin{equation}
W^{l} = \bigg\{ \sup_{ v'_{l} \le s < t \le w_{l}} \frac{|\eta_{n} (s) - \eta_{n} (t) |}{|s-t|^{\frac{1}{\beta}}} <  c_{\star} \, m_{0}^{1 - \frac{1}{\beta}}  \bigg\},
\end{equation}
where $c_{\star} $ is the constant as in Lemma \ref{lem-con-1}.
Applying Lemma \ref{lem-con-1} and Proposition \ref{lem-con-2} to the case that $\lambda = \gamma_{n} [0, v_{l} ]$ and using the domain Markov property, it follows from \eqref{0905-1'} that there exists some universal constant $c > 0$ such that 
\begin{equation*}
P \Big( W^{l} \ \Big| \ \eta_{n} \big[ 0, v'_{l} \big] \Big) \le 1 - c 2^{- 4 m_{0}^{2} }
\end{equation*}
for each $1 \le l \le q_{1}$. Iterating this and recalling that $q_{1} \asymp 2^{m_{1}} = 2^{m^{\frac{1}{2}}}$ and $m_{0} = m^{\frac{1}{10}}$, we have 
\begin{align*}
&P \Big( \bigcap_{l=1}^{q_{1}} W^{l} \Big) = P \Big( W^{q_{1}} \ \Big| \ \bigcap_{l=1}^{q_{1} -1} W^{l} \Big) \, P \Big( \bigcap_{l=1}^{q_{1}-1} W^{l} \Big) \le \Big( 1 - c 2^{- 4 m_{0}^{2} } \Big) \, P \Big( \bigcap_{l=1}^{q_{1}-1} W^{l} \Big)  \\
& \le \cdots \le 
 \Big\{ 1 - c 2^{- 4 m_{0}^{2} } \Big\}^{ c 2^{m^{\frac{1}{2}}} } \le C \exp \Big\{ - c' \, 2^{ c' m^{\frac{1}{2}} } \Big\},
\end{align*}
where we used the fact that the event $\bigcap_{l=1}^{q_{1} -1} W^{l}$ is measurable with respect to $\eta_{n} [0, v_{q_{1}}']$ in the second inequality thanks to $w_{q_{1}-1} < v_{q_{1}}'$.
Therefore, if we write 
\begin{equation}
A_{m, n} =  \bigg\{ \sup_{ 0 \le s < t \le t_{\eta_{n}} } \frac{|\eta_{n} (s) - \eta_{n} (t) |}{|s-t|^{\frac{1}{\beta}}} \ge  c_{\star} \, m_{0}^{1 - \frac{1}{\beta}}  \bigg\},
\end{equation}
it holds that 
\begin{equation}\label{p}
P \big( A_{m, n} \big) \ge 1 - C \exp \Big\{ - c' \, 2^{ c' m^{\frac{1}{2}} } \Big\}.
\end{equation}
Since $\eta_{n}$ converges to $\eta$ with respect to the metric $\rho$ (see Section \ref{metric} for $\rho$), it follows from \eqref{p} that 
\begin{equation*}
P \bigg[   \sup_{ 0 \le s < t \le t_{\eta}} \frac{|\eta (s) - \eta (t) |}{|s-t|^{\frac{1}{\beta}}} \ge  \frac{c_{\star}}{2}  \cdot m_{0}^{1 - \frac{1}{\beta}}   \bigg] \ge 1 - C \exp \Big\{ - c' \, 2^{ c' m^{\frac{1}{2}} } \Big\}.
\end{equation*}
Using the Borel-Cantelli lemma, it holds that with probability one there exists $m'$ such that
\begin{equation*}
 \sup_{ 0 \le s < t \le t_{\eta}} \frac{|\eta (s) - \eta (t) |}{|s-t|^{\frac{1}{\beta}}} \ge  \frac{c_{\star}}{2}  \cdot m^{\frac{1}{10} - \frac{1}{10 \beta}}  \  \ \text{ for all } \ m \ge m'.
 \end{equation*}
Since $\beta \in (1, \frac{5}{3} ] $, letting $m \to \infty$, the proof is completed. 
\end{proof}

\begin{acks}
The authors thank two anonymous referees for their careful reading and numerous helpful comments on an earlier draft of this work. XL's research is supported by the National Key R\&D Program of China (No.\ 2020YFA0712900 and No.\ 2021YFA1002700) and NSFC (No.\ 12071012). 
DS would like to thank David Croydon for his kind explanation of results and techniques in \cite{BCK}. The proof of the second claim of the main theorem is inspired by the discussion.
DS is supported by a JSPS Grant-in-Aid for Early-Career Scientists, 18K13425 and JSPS KAKENHI Grant Number 17H02849 and 18H01123. 
\end{acks}

\end{document}